\newcommand{\bea}{\begin{eqnarray}}
\newcommand{\eea}{\end{eqnarray}}
\def\beaa{\begin{eqnarray*}}
\def\eeaa{\end{eqnarray*}}
\def\ba{\begin{array}}
\def\ea{\end{array}}
\def\be#1{\begin{equation} \label{#1}}
\def \eeq{\end{equation}}
\def\be{{\beta}}
\def\eps{\epsilon}
\def\al{\alpha}
\def\c{\cdot}
\def\H{{\mathbb{H}}}
\def\R{{\mathbb{R}}}
\def\C{{\mathbb{C}}}
\def\S{{\mathbb{S}}}
\def\D{{\bf D}}
\def\c{{\bf c}}
\def\g{{\bf g}}
\DeclareMathOperator{\ch}{ch}
\DeclareMathOperator{\sh}{sh}
\newtheorem{theorem}{Theorem}[section]
\newtheorem{lemma}[theorem]{Lemma}
\newtheorem{proposition}[theorem]{Proposition}
\newtheorem{corollary}[theorem]{Corollary}
\newtheorem{definition}[theorem]{Definition}
\newtheorem{remark}[theorem]{Remark}
\numberwithin{equation}{section}
\begin{document}

\title[Energy-critical defocusing NLS]{On the global well-posedness of energy-critical Schr\"{o}dinger equations in curved spaces}

\author{Alexandru D. Ionescu}
\address{University of Wisconsin--Madison}
\email{ionescu@math.wisc.edu}

\author{Benoit Pausader}
\address{Brown University}
\email{benoit.pausader@math.brown.edu}

\author{Gigliola Staffilani}
\address{Massachusetts Institute of Technology}
\email{gigliola@math.mit.edu}

\thanks{The first author was supported in part by a Packard Fellowship. The third author was supported in part by NSF Grant DMS 0602678. The second and third author thank the MIT/France program during which this work was initiated.}

\begin{abstract}
In this paper we present a method to study global regularity properties of solutions of large-data critical Schr\"{o}dinger equations on certain noncompact Riemannian manifolds. We rely on concentration compactness arguments and a global Morawetz inequality adapted to the geometry of the manifold (in other words we adapt the method of Kenig-Merle \cite{KeMe} to the variable coefficient case), and a good understanding of the corresponding Euclidean problem (in our case the main theorem of Colliander-Keel-Staffilani-Takaoka-Tao \cite{CKSTTcrit}). 

As an application we prove global well-posedness and scattering in $H^1$ for the energy-critical defocusing initial-value problem
\begin{equation*}
(i\partial_t+\Delta_\g)u=u|u|^{4},\qquad u(0)=\phi,
\end{equation*}
on the hyperbolic space $\H^3$.  
\end{abstract}
\maketitle
\tableofcontents

\section{Introduction}\label{Intro}

The goal of this paper is to present a somewhat general method to prove global well-posedness of critical\footnote{Here critical refers to the fact that when $(M,{\bf g})=(\mathbb{R}^3,\delta_{ij})$, the equation and the control (here the energy) are invariant under the rescaling $u(x,t)\to \lambda^\frac{1}{2}u(\lambda x,\lambda^2 t)$.}
nonlinear Schr\"odinger initial-value problems of the form
\begin{equation}\label{GNLS}
(i\partial_t+\Delta_\g)u=\mathcal{N}(u),\qquad u(0)=\phi,
\end{equation}
on certain noncompact Riemannian manifolds $(M,{\bf g})$. 
Here $\Delta_\g=g^{ij}\left(\partial_{ij}-\Gamma_{ij}^k\partial_k\right)$ is the (negative) Laplace-Beltrami operator of $(M,{\bf g})$.
In Euclidean spaces, the subcritical theory of such nonlinear Schr\"{o}dinger equations is well established, see for example the books \cite{Cazenave:book} or \cite{Tao:book} for many references. Many of the subcritical methods extend also to the study of critical equations with small data. The case of large-data critical Schr\"{o}dinger equations is more delicate, and was first considered by Bourgain \cite{B} and Grillakis \cite{G} for defocusing Schr\"{o}dinger equations with pure power nonlinearities and spherically symmetric data. The spherical symmetry assumption was removed, in dimension $d=3$ by Colliander-Keel-Staffilani-Takaoka-Tao \cite{CKSTTcrit} global well-posedness was then extended to higher dimensions $d\geq 4$ by Ryckman-Visan \cite{RV} and Visan \cite{V}.

A key development in the theory of large-data critical dispersive problems was the work of Kenig-Merle \cite{KeMe}, on spherically symmetric solutions of the energy-critical focusing NLS in $\mathbb{R}^3$. The methods developed in this paper found applications in many other large-data critical dispersive problems, leading to complete solutions or partial results. We adapt this point of view in our variable coefficient setting as well.

To keep things as simple as  possible on a technical level, in this paper we consider only the energy-critical defocusing Schr\"{o}dinger equation
\begin{equation}\label{eq1}
(i\partial_t+\Delta_\g)u=u|u|^4
\end{equation}
in the hyperbolic space $\H^3$. Suitable solutions on the time interval $(T_1,T_2)$ of \eqref{eq1} satisfy mass and energy conservation, in the sense that the functions
\begin{equation}\label{conserve}
E^0(u)(t):=\int_{\H^3}|u(t)|^2\,d\mu,\qquad E^1(u)(t):=\frac{1}{2}\int_{\H^3}|\nabla_\g u(t)|^2\,d\mu+\frac{1}{6}\int_{\H^3}|u(t)|^{6}\,d\mu,
\end{equation}
are constant on the interval $(T_1,T_2)$. Our main theorem concerns global well-posedness and scattering in $H^1(\H^3)$ for the initial-value problem associated to the equation \eqref{eq1}. 

\begin{theorem}\label{Main1} (a) (Global well-posedness) If $\phi\in H^1(\H^3)$\footnote{Unlike in Euclidean spaces, in hyperbolic spaces $\H^d$ one has the uniform inequality $\int_{\H^d}|f|^2\,d\mu\lesssim\int_{\H^d}|\nabla f|^2\,d\mu$ for any $f\in C^\infty_0(\H^d)$. In other words $\dot{H}^1(\H^d)\hookrightarrow L^2(\H^d)$.} then there exists a unique global solution $u\in C(\R:H^1(\H^3))$ of the initial-value problem
\begin{equation}\label{eq1.1}
(i\partial_t+\Delta_\g)u=u|u|^{4},\qquad u(0)=\phi.
\end{equation}
In addition, the mapping $\phi\to u$ is a continuous mapping from $H^1(\H^3)$ to $C(\R:H^1(\H^3))$, and the quantities $E^0(u)$ and $E^1(u)$ defined in \eqref{conserve} are conserved.

(b) (Scattering) We have the bound
\begin{equation}\label{sol2}
\|u\|_{L^{10}(\H^3\times\R)}\leq C(\|\phi\|_{H^1(\H^3)}).
\end{equation}
As a consequence, there exist unique $u_{\pm}\in H^1(\H^3)$ such that
\begin{equation}\label{sol4}
\|u(t)-e^{it\Delta_\g}u_{\pm}\|_{H^1(\H^3)}=0\text{ as }t\to\pm\infty.
\end{equation}   
\end{theorem}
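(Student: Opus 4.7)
The plan is to implement the Kenig--Merle concentration-compactness/rigidity scheme in the variable-coefficient setting of $\H^3$, using the Euclidean result of \cite{CKSTTcrit} as a black box to handle Euclidean-scale bubbles, and a hyperbolic Morawetz estimate to rule out the critical element.

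\medskip

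\textbf{Step 1 (linear theory and small-data well-posedness).} First I would establish Strichartz estimates for $e^{it\Delta_\g}$ on $\H^3$. The dispersion on hyperbolic space is at least as good as Euclidean, and the spectral gap $\sigma(-\Delta_\g)\subset[1,\infty)$ yields the embedding $\dot H^1(\H^3)\hookrightarrow L^2(\H^3)$, so one can work with the full $H^1$-scale including the critical norm $L^{10}_{t,x}$. These estimates give local well-posedness and energy conservation for \eqref{eq1.1}, small-data global well-posedness with scattering, and a stability (perturbation) lemma: any approximate $H^1$ solution with a bounded $L^{10}_{t,x}$ norm can be perturbed by small errors without leaving the class of global scattering solutions.

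\medskip

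\textbf{Step 2 (profile decomposition).} The central technical tool is a linear profile decomposition for bounded sequences $(\phi_n)\subset H^1(\H^3)$. Since the isometry group of $(\H^3,\g)$ is noncompact but contains no dilations, profiles split into two classes: \emph{Euclidean profiles}, concentrating at a sequence of centers $x_n\in\H^3$ with vanishing scales $\lambda_n\to0$, which after geodesic normal coordinates and rescaling live on tangent copies of $\R^3$; and \emph{hyperbolic profiles} of scale $\sim 1$ that remain genuinely $\H^3$-valued. One shows orthogonality of the profiles in $\dot H^1$ and $L^6$, together with smallness of the remainder in $L^{10}$. Crucially, each Euclidean profile generates, via \eqref{eq1} on $\H^3$, a nonlinear solution close in the scattering norm to a genuine Euclidean NLS solution (using local flatness of $\H^3$ at small scales and convergence of the rescaled metric to $\delta_{ij}$); hence \cite{CKSTTcrit} provides a global $L^{10}_{t,x}$ bound for each Euclidean piece.

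\medskip

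\textbf{Step 3 (extraction of a minimal blow-up solution and rigidity).} Assuming by contradiction that \eqref{sol2} fails, one defines
\[
E_c:=\sup\{E\geq0:\text{every }\phi\in H^1(\H^3)\text{ with }E^1(u)\leq E\text{ yields }\|u\|_{L^{10}(\H^3\times\R)}<\infty\},
\]
so $E_c<\infty$. Choose $\phi_n$ with $E^1(u_n)\to E_c$ and $\|u_n\|_{L^{10}}\to\infty$, and apply the profile decomposition. If two or more profiles were nontrivial, energy subadditivity combined with scattering for each piece (Euclidean profiles via \cite{CKSTTcrit}, hyperbolic profiles by induction on energy since each has $E^1<E_c$) and the stability lemma of Step~1 would bound $\|u_n\|_{L^{10}}$, contradiction. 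Hence only one hyperbolic profile survives, producing a nonzero critical element $u_c\in C(\R:H^1(\H^3))$ of energy $E_c$ whose orbit is precompact in $H^1$ modulo hyperbolic isometries. To rule it out, I would prove a global Morawetz-type inequality tailored to $\H^3$: using a weight built from the geodesic distance $d(x,x_0)$ to an arbitrary origin, whose Hessian is \emph{positive} because of negative curvature, a virial computation delivers an a priori bound of the form $\int_\R\!\int_{\H^3} F(d(x,x_0))|u|^6\,d\mu\,dt\lesssim C(E^1(u))$ with $F>0$. Combined with the precompactness (after recentering by isometries) of the orbit of $u_c$, this yields a uniform lower bound on a localized $L^6$-mass in contradiction with the Morawetz bound, forcing $u_c\equiv 0$.

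\medskip

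\textbf{Main obstacle.} The step I expect to consume most of the real work is the profile decomposition of Step~2 together with its matching to the Euclidean theorem: one must build a precise dictionary between concentration in $H^1(\H^3)$ and concentration in $H^1(\R^3)$, control metric errors when transferring Euclidean nonlinear profiles back to $\H^3$ on the correct time scales, and verify that nonlinear orthogonality of profiles persists under the flow so that \cite{CKSTTcrit} can be applied piece-by-piece and so that the induction on energy closes. The hyperbolic Morawetz rigidity of Step~3 is then relatively clean thanks to the favorable convexity properties of $(\H^3,\g)$.
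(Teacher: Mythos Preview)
Your proposal is correct and matches the paper's approach almost exactly: Strichartz/stability theory, a profile decomposition into Euclidean and hyperbolic frames with the Euclidean pieces handled via \cite{CKSTTcrit}, extraction of a single hyperbolic critical element, and a Morawetz rigidity. The one place the paper is simpler than what you sketch is the rigidity step: thanks to negative curvature the hyperbolic Morawetz inequality (Proposition~\ref{MoraIneq}) is \emph{unweighted}, $\|u\|_{L^6(\H^3\times[t_1,t_2])}^6\lesssim\sup_t\|u(t)\|_{L^2}\|u(t)\|_{H^1}$, so no recentering via precompactness is needed---one simply shows $\|u_c(t)\|_{L^6_x}\geq\delta>0$ for all $t$ (else a subsequence would converge in $H^1$ to zero by Proposition~\ref{lem4}, contradicting energy conservation) and integrates in $t$.
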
 

It was observed by Banica \cite{Ba} that the hyperbolic geometry cooperates well with the dispersive nature of Schr\"{o}dinger equations, at least in the case of subcritical problems. In fact the long time dispersion of solutions is stronger in the hyperbolic geometry than in the Euclidean geometry. Intuitively, this is due to the fact that the volume of a ball of radius $R+1$ in hyperbolic spaces is about twice as large as the volume of a ball of radius $R$, if $R\geq 1$; therefore, as outgoing waves advance one unit in the geodesic direction they have about twice as much volume to disperse into. This heuristic can be made precise, see \cite{AnPi,Ba,BaCaDu,BaCaSt,BaDu,Bouc,ChrMar,IoSt,Pie} for theorems concerning subcritical nonlinear Schr\"{o}dinger equations in hyperbolic spaces (or other spaces that interpolate between Euclidean and hyperbolic spaces). The theorems proved in these papers are stronger than the corresponding theorems in Euclidean spaces, in the sense that one obtains better scattering and dispersive properties of the nonlinear solutions.

We remark, however, that the global geometry of the manifold cannot bring any improvements in the case of critical problems. To see this, consider only the case of data of the form
\begin{equation}\label{Alex29}
\phi_N(x)=N^{1/2}\psi(N\Psi^{-1}(x)),
\end{equation}
where $\psi\in C^\infty_0(\mathbb{R}^3)$ and $\Psi:\mathbb{R}^3\to\mathbb{H}^3$ is a suitable local system of coordinates. Assuming that $\psi$ is fixed and letting $N\to\infty$, the functions $\phi_N\in C^\infty_0(\mathbb{H}^3)$ have uniformly bounded $H^1$ norm. For any $T\geq 0$ and $\psi$ fixed, one can prove that the nonlinear solution of \eqref{eq1.1} corresponding to data $\phi_N$ is well approximated by
\begin{equation*}
N^{1/2}v(N\Psi^{-1}(x),N^2t)
\end{equation*}
on the time interval $(-TN^{-2},TN^{-2})$, for $N$ sufficiently large (depending on $T$ and $\psi$), where $v$ is the solution on the time interval $(-T,T)$ of the Euclidean nonlinear Schr\"{o}dinger equation
\begin{equation}\label{Alex30}
(i\partial_t+\Delta)v=v|v|^4,\qquad v(0)=\psi.
\end{equation}
See Section \ref{Eucl} for precise statements. In other words, the solution of the hyperbolic NLS \eqref{eq1.1} with data $\phi_N$ can be regular on the time interval $(-TN^{-2},TN^{-2})$ only if the solution of the Euclidean NLS \eqref{Alex30} is regular on the interval $(-T,T)$. This shows that understanding the Euclidean scale invariant problem is a prerequisite for understanding the problem on any other manifold. Fortunately, we are able to use the main theorem of Colliander-Keel-Staffilani-Takaoka-Tao \cite{CKSTTcrit} as a black box (see the proof of Lemma \ref{step1}).

The previous heuristic shows that understanding the {\it{scaling limit}} problem \eqref{Alex30} is part of understanding the full nonlinear evolution \eqref{eq1.1}, at least if one is looking for uniform  control on all solutions below a certain energy level. This approach was already used in the study of elliptic equations, first in the subcritical case (where the scaling limits are easier) by Gidas-Spruck \cite{GidSpr} and also in the $H^1$ critical setting, see for example Druet-Hebey-Robert \cite{DruHebRob}, Hebey-Vaugon \cite{HebVau}, Schoen \cite{Sch} and (many) references therein for examples. Note however that in the dispersive case, we have to contend with the fact that we are looking at perturbations of a linear operator $i\partial_t+\Delta_\g$ whose kernel is infinite dimensional.

Other critical dispersive models, such as large-data critical wave equations or the Klein--Gordon equation have also been studied extensively, both in the case of the Minkowski space and in other Lorentz manifolds. See for example \cite{BaGe,BaSh,BuLePl,BuPl,Gr1,Gr2,IbMa,IbMaMaNa,IbMaMaNa2,Ka,KeMe2,KilStoVis,La,ShSt1,ShSt2,St} and the book \cite{Tao:book} for further discussion and references. In the case of the wave equation, passing to the variable coefficient setting is somewhat easier due the finite speed of propagation of solutions.

Nonlinear Schr\"{o}dinger equations such as \eqref{GNLS} have also been considered in the setting of compact Riemannian manifolds $(M,\g)$, see \cite{Bo2, Bo1, BuGeTz, BuGeTz2,CKSTTTorus,GerPie}. In this case the conclusions are generally weaker than in Euclidean spaces: there is no scattering to linear solutions, or some other type of asymptotic control of the nonlinear evolution as $t\to\infty$. Moreover, in certain cases such as the spheres $\mathbb{S}^d$, the well-posedness theory requires sufficiently subcritical nonlinearities, due to concentration of certain spherical harmonics. We note however the recent result of Herr-Tataru-Tzvetkov \cite{HeTaTz} on the global well-posedness of the energy critical NLS with small initial data in $H^1(T^3)$.

To simplify the exposition, we use some of the structure of the hyperbolic spaces; in particular we exploit the existence of a large group of isometries that acts transitively on $\mathbb{H}^d$. However the main ingredients in the proof are more basic, and can probably be extended to more general settings. These main ingredients are:

\begin{enumerate}
\item  A dispersive estimate such as \eqref{dispersive}, which gives a good large-data local well-posedness/stability theory (Propositions \ref{localwp} and \ref{stability}).

\item A good Morawetz-type inequality (Proposition \ref{MoraIneq}) to exploit the global defocusing character of the equation.

\item A good understanding of the Euclidean problem, provided in this case by Theorem \ref{MainThmEucl} of Colliander-Keel-Staffilani-Takaoka-Tao \cite{CKSTTcrit}.

\item Some uniform control of the geometry of the manifold at infinity.
\end{enumerate}

The rest of the paper is organized as follows: in Section \ref{preliminaries} we set up the notations, and record the main dispersive estimates on the linear Schr\"{o}dinger flow on hyperbolic spaces. We prove also several lemmas that are used later.

In Section \ref{mainproof} we collect all the necessary ingredients described above, and outline the proof of the main theorem. The only component of the proof that is not known is Proposition \ref{lem4} on the existence of a suitable minimal energy blow-up solution. 

In Section \ref{Eucl} we consider nonlinear solutions of \eqref{eq1.1} corresponding to data that contract at a point, as in \eqref{Alex29}. Using the main theorem in \cite{CKSTTcrit} we prove that such nonlinear solutions extend globally in time and satisfy suitable dispersive bounds.

In Section \ref{profile} we prove our main profile decomposition of $H^1$-bounded sequences of functions in hyperbolic spaces. This is the analogue of Keraani's theorem \cite{Ker} in Euclidean spaces. In hyperbolic spaces we have to distinguish between two types of profiles: Euclidean profiles which may contract at a point, after time and space translations, and hyperbolic profiles which live essentially at frequency\footnote{Here we define the notion of frequency through the Heat kernel, see \eqref{proj}.} $N=1$. The hyperbolic geometry guarantees that profiles of low frequency $N\ll 1$ can be treated as perturbations.
Finally, in Section \ref{proofnew} we use our profile decomposition and orthogonality arguments to complete the proof of Proposition \ref{lem4}.       

\section{Preliminaries}\label{preliminaries}

In this subsection we review some aspects of the harmonic analysis and the geometry of hyperbolic spaces, and summarize our notations. For simplicity, we will use the conventions in \cite{Br}, but one should keep in mind that hyperbolic spaces are the simplest examples of symmetric spaces of the noncompact type, and most of the analysis on hyperbolic spaces can be generalized to this setting (see for example, \cite{He2}).

\subsection{Hyperbolic spaces: Riemannian structure and isometries} For integers $d\geq 2$ we consider the Minkowski space $\R^{d+1}$ with the standard Minkowski metric $-(dx^0)^2+(dx^1)^2+\ldots+(dx^d)^2$ and define the bilinear form on $\R^{d+1}\times\R^{d+1}$,
\begin{equation*}
[x,y]=x^0y^0-x^1y^1-\ldots-x^dy^d.
\end{equation*}
The hyperbolic space $\H^d$ is defined as 
\begin{equation*}
\H^d=\{x\in\R^{d+1}:[x,x]=1\text{ and }x^0>0\}.
\end{equation*}
Let ${\bf{0}}=(1,0,\ldots,0)$ denote the origin of $\H^d$. The Minkowski metric on $\R^{d+1}$ induces a Riemannian metric $\g$ on $\H^d$, with covariant derivative $\D$ and induced measure $d\mu$.

We define ${\mathbb G}:=SO(d,1)=SO_e(d,1)$ as the connected Lie group of $(d+1)\times(d+1)$ matrices that leave the form $[.,.]$ invariant. Clearly, $X\in SO(d,1)$ if and only if
\begin{equation*}
{}^{tr}X\cdot I_{d,1}\cdot X=I_{d,1},\quad\det X=1,\quad X_{00}>0,
\end{equation*}
where $I_{d,1}$ is the diagonal matrix $\mathrm{diag}[-1,1,\ldots,1]$ (since $[x,y]=-{}^tx\cdot I_{d,1}\cdot y$). Let ${\mathbb K}=SO(d)$ denote the subgroup of $SO(d,1)$ that fixes the origin ${\bf{0}}$. Clearly, $SO(d)$ is the compact rotation group acting on the variables $(x^1,\ldots,x^d)$. We define also the commutative subgroup $\mathbb{A}$ of $\mathbb{G}$,
\begin{equation}\label{subgroupa}
\mathbb{A}:=\left\{a_s=\begin{bmatrix}
&\ch s&\sh s&0\\
&\sh s&\ch s&0\\
&0&0&I_{d-1}
\end{bmatrix}:s\in\R\right\},
\end{equation}
and recall the Cartan decomposition
\begin{equation}\label{cartan}
\mathbb{G}=\mathbb{K}\mathbb{A}_+\mathbb{K},\qquad \mathbb{A}_+:=\{a_s:s\in[0,\infty)\}.
\end{equation} 

The semisimple Lie group $\mathbb{G}$ acts transitively on $\H^d$ and the hyperbolic space $\H^d$ can be identified with the homogeneous space ${\mathbb G}/{\mathbb K}=SO(d,1)/SO(d)$. Moreover, for any $h\in SO(d,1)$ the mapping $L_h:\H^d\to\H^d$, $L_h(x)=h\cdot x$, defines an isometry of $\H^d$. Therefore, for any $h\in\mathbb{G}$, we define the isometries
\begin{equation}\label{pi}
\pi_h:L^2(\H^d)\to L^2(\H^d),\qquad\pi_h(f)(x)=f(h^{-1}\cdot x).
\end{equation}

We fix normalized coordinate charts which allow us to pass in a suitable way between functions defined on hyperbolic spaces and functions defined on Euclidean spaces. More precisely, for any $h\in SO(d,1)$ we define the diffeomorphism
\begin{equation}\label{PsiCoord}
\Psi_h:\R^d\to \H^d,\qquad \Psi_h(v^1,\ldots,v^d)=h\cdot(\sqrt{1+|v|^2},v^1,\ldots,v^d).
\end{equation}
Using these diffeomorphisms we define, for any $h\in\mathbb{G}$,
\begin{equation}\label{pitilde}
\widetilde{\pi}_h:C(\R^d)\to C(\H^d),\qquad\widetilde{\pi}_h(f)(x)=f(\Psi_h^{-1}(x)).
\end{equation}
We will use the diffeomorphism $\Psi_I$ as a global coordinate chart on $\mathbb{H}^d$, where $I$ is the identity element of $\mathbb{G}$. We record the integration formula
\begin{equation}\label{changevar}
\int_{\mathbb{H}^d}f(x)\,d\mu(x)=\int_{\mathbb{R}^d}f(\Psi_I(v))(1+|v|^2)^{-1/2}\,dv
\end{equation}
for any $f\in C_0(\mathbb{H}^d)$.

\subsection{The Fourier transform on hyperbolic spaces}
The Fourier transform (as defined by Helgason \cite{He3} in the more general setting of symmetric spaces) takes suitable functions defined on $\H^d$ to functions defined on $\R\times\S^{d-1}$. For $\omega\in \S^{d-1}$ and $\lambda\in\C$, let $b(\omega)=(1,\omega)\in\R^{d+1}$ and 
\begin{equation*}
h_{\lambda,\omega}:\H^d\to\C,\quad h_{\lambda,\omega}(x)=[x,b(\omega)]^{i\lambda-\rho},
\end{equation*}
where
\begin{equation*}
\rho=(d-1)/2.
\end{equation*}
It is known that
\begin{equation}\label{fou}
\Delta_\g h_{\lambda,\omega}=-(\lambda^2+\rho^2)h_{\lambda,\omega},
\end{equation}
where $\Delta_\g$ is the Laplace-Beltrami operator on $\H^d$. The Fourier transform of $f\in C_0(\H^d)$ is defined by the formula
\begin{equation}\label{fht}
\widetilde{f}(\lambda,\omega)=\int_{\H^d}f(x)h_{\lambda,\omega}(x)\,d\mu=\int_{\H^d}f(x)[x,b(\omega)]^{i\lambda-\rho}\,d\mu.
\end{equation}
This transformation admits a Fourier inversion formula: if $f\in C^\infty_0(\H^d)$ then
\begin{equation}\label{Finv}
f(x)=\int_0^\infty\int_{\S^{d-1}}\widetilde{f}(\lambda,\omega)[x,b(\omega)]^{-i\lambda-\rho}|\c(\lambda)|^{-2}\,d\lambda d\omega,
\end{equation}
where, for a suitable constant $C$,
\begin{equation*}
\c(\lambda)=C\frac{\Gamma(i\lambda)}{\Gamma(\rho+i\lambda)}
\end{equation*}
is the Harish-Chandra $\c$-function corresponding to $\H^d$, and the invariant measure of $\S^{d-1}$ is normalized to $1$. It follows from \eqref{fou} that
\begin{equation}\label{fou2}
\widetilde{\Delta_\g f}(\lambda,\omega)=-(\lambda^2+\rho^2)\widetilde{f}(\lambda,\omega).
\end{equation}
We record also the nontrivial identity
\begin{equation*}
\int_{\S^{d-1}}\widetilde{f}(\lambda,\omega)[x,b(\omega)]^{-i\lambda-\rho}d\omega=\int_{\S^{d-1}}\widetilde{f}(-\lambda,\omega)[x,b(\omega)]^{i\lambda-\rho}d\omega
\end{equation*}
for any $f\in C^\infty_0(\H^d)$, $\lambda\in \C$, and $x\in\H^d$.

According to the Plancherel theorem, the Fourier transform $f\to\widetilde{f}$ extends to an isometry of $L^2(\H^d)$ onto $L^2(\R_+\times\S^{d-1},|\c(\lambda)|^{-2}d\lambda d\omega)$; moreover
\begin{equation}\label{Plancherel}
\int_{\H^d}f_1(x)\overline{f_2(x)}\,d\mu=\frac{1}{2}\int_{\R\times\S^{d-1}}\widetilde{f_1}(\lambda,\omega)\overline{\widetilde{f_2}(\lambda,\omega)}|\c(\lambda)|^{-2}\,d\lambda d\omega,
\end{equation}
for any $f_1,f_2\in L^2(\H^d)$. As a consequence, any bounded multiplier $m:\R_+\to\mathbb{C}$ defines a bounded operator $T_m$ on $L^2(\H^d)$ by the formula
\begin{equation}\label{defop}
\widetilde{T_m(f)}(\lambda,\omega)=m(\lambda)\cdot \widetilde{f}(\lambda,\omega).
\end{equation}

The question of $L^p$ boundedness of operators defined by multipliers as in \eqref{defop} is more delicate if $p\neq 2$. A necessary condition for boundedness on $L^p(\H^d)$ of the operator $T_m$ is that the multiplier $m$ extend to an even analytic function in the interior of the region $\mathcal{T}_p=\{\lambda\in\C:|\Im\lambda|<|2/p-1|\rho\}$ (see \cite{ClSt}). Conversely, if $p\in(1,\infty)$ and $m:\mathcal{T}_p\to\mathbb{C}$ is an even analytic function which satisfies the symbol-type bounds
\begin{equation}\label{difeq}
|\partial^\alpha m(\lambda)|\leq C(1+|\lambda|)^{-\alpha}\text{ for any }\al\in[0,d+2]\cap\mathbb{Z}\text{ and }\lambda\in\mathcal{T}_p,
\end{equation}  
then $T_m$ extends to a bounded operator on $L^p(\H^d)$ (see \cite{StTo}).

As in Euclidean spaces, there is a connection between convolution operators in hyperbolic spaces and multiplication operators in the Fourier space. To state this connection precisely, we normalize first the Haar measures on ${\mathbb K}$ and ${\mathbb G}$ such that $\int_{\mathbb K} 1\,dk=1$ and 
\begin{equation*}
\int_{\mathbb G}f(g\cdot {\bf{0}})\,dg=\int_{\H^d}f(x)\,d\mu
\end{equation*}
for any $f\in C_0(\H^d)$. Given two functions $f_1,f_2\in C_0({\mathbb G})$ we define the convolution
\begin{equation}\label{convo}
(f_1\ast f_2)(h)=\int_{\mathbb G}f_1(g)f_2(g^{-1}h)\,dg.
\end{equation}
A function $K:{\mathbb G}\to\mathbb{C}$ is called ${\mathbb K}$-biinvariant if 
\begin{equation}\label{inv1}
K(k_1gk_2)=K(g)\text{ for any }k_1,k_2\in\mathbb{K}.
\end{equation}
Similarly, a function $K:\H^d\to\mathbb{C}$ is called ${\mathbb K}$-invariant (or radial) if 
\begin{equation}\label{inv2}
K(k\cdot x)=K(x)\text{ for any }k\in\mathbb{K}\text{ and }x\in\H^d.
\end{equation}
If $f,K\in C_0(\H^d)$ and $K$ is $\mathbb{K}$-invariant then we define (compare to \eqref{convo})
\begin{equation}\label{convo2}
(f\ast K)(x)=\int_{\mathbb G}f(g\cdot{\bf{0}})K(g^{-1}\cdot x)\,dg.
\end{equation}

If $K$ is $\mathbb{K}$-invariant then the Fourier transform formula \eqref{fht} becomes
\begin{equation}\label{fhtrad}
\widetilde{K}(\lambda,\omega)=\widetilde{K}(\lambda)=\int_{\H^d}K(x)\Phi_{-\lambda}(x)\,d\mu,
\end{equation}
where 
\begin{equation}\label{elem}
\Phi_\lambda(x)=\int_{\S^{d-1}}[x,b(\omega)]^{-i\lambda-\rho}\,d\omega
\end{equation}
is the elementary spherical function. The Fourier inversion formula \eqref{Finv} becomes
\begin{equation}\label{Finvrad}
K(x)=\int_0^\infty\widetilde{K}(\lambda)\Phi_\lambda(x)|\c(\lambda)|^{-2}\,d\lambda,
\end{equation}
for any $\mathbb{K}$-invariant function $K\in C^\infty_0(\H^d)$. With the convolution defined as in \eqref{convo2}, we have the important identity
\begin{equation}\label{keyiden}
\widetilde{(f\ast K)}(\lambda,\omega)=\widetilde{f}(\lambda,\omega)\cdot\widetilde{K}(\lambda)
\end{equation}
for any $f,K\in C_0(\H^d)$, provided that $K$ is $\mathbb{K}$-invariant\footnote{Unlike in Euclidean Fourier analysis, there is no simple identity of this type without the assumption that $K$ is $\mathbb{K}$-invariant.}. 

We define now the inhomogeneous Sobolev spaces on $\H^d$. There are two possible definitions: using the Riemannian structure $\g$ or using the Fourier transform. These two definitions agree. In view of \eqref{fou2}, for $s\in\mathbb{C}$ we define the operator $(-\Delta)^{s/2}$ as given by the Fourier multiplier $\lambda\to (\lambda^2+\rho^2)^{s/2}$. For $p\in(1,\infty)$ and $s\in\R$ we define the Sobolev space $W^{p,s}(\H^d)$ as the closure of $C^\infty_0(\H^d)$ under the norm
\begin{equation*}
\|f\|_{W^{p,s}(\H^d)}=\|(-\Delta)^{s/2}f\|_{L^p(\H^d)}.
\end{equation*}
For $s\in \R$ let $H^s=W^{2,s}$. This definition is equivalent to the usual definition of the Sobolev spaces on Riemannian manifolds (this is a consequence of the fact that the operator $(-\Delta_\g)^{s/2}$ is  bounded on $L^p(\H^d)$ for any $s\in\C$, $\Re s\leq 0$, since its symbol satisfies the differential inequalities \eqref{difeq}). In particular, for $s=1$ and $p\in(1,\infty)$
\begin{equation}\label{sobiden}
\|f\|_{W^{p,1}(\H^d)}=\|(-\Delta)^{1/2}f\|_{L^p(\H^d)}\approx_p\Big[\int_{\H^d}|\nabla_\g f|^{p}\,d\mu\Big]^{1/p},
\end{equation}
where
\begin{equation*}
|\nabla_\g f|:=|\D^\al f\D_\al\overline{f}|^{1/2}.
\end{equation*}
We record also the Sobolev embedding theorem
\begin{equation}\label{Sobemb}
W^{p,s}\hookrightarrow L^q\qquad\text{ if }1<p\leq q<\infty\text{ and }s=d/p-d/q.
\end{equation}      

\subsection{Dispersive estimates} Most of our perturbative analysis in the paper is based on the Strichartz estimates for the linear Schr\"{o}dinger flow. For any $\phi\in H^s(\H^d)$, $s\in\R$, let $e^{it\Delta_\g}\phi\in C(\R:H^s(\H^d))$ denote the solution of the free Schr\"{o}dinger evolution with data $\phi$, i.e.
\begin{equation*}
\widetilde{e^{it\Delta_\g}\phi}(\lambda,\omega)=\widetilde{\phi}(\lambda,\omega)\cdot e^{-it(\lambda^2+\rho^2)}.
\end{equation*}
The main inequality we need is the dispersive estimate{\footnote{In fact this estimate can be improved if $|t|\geq 1$, see \cite[Lemma 3.3]{IoSt}. This leads to better control of the longtime behaviour of solutions of subcritical Schr\"{o}dinger equations in hyperbolic spaces, compared to the behaviour of solutions of the same equations in Euclidean spaces (see \cite{Ba}, \cite{BaCaSt}, \cite{IoSt}, and \cite{AnPi}).} (see \cite{AnPi,Ba,BaCaSt,IoSt,Pie})
\begin{equation}\label{dispersive}
\|e^{it\Delta_\g}\|_{L^p\to L^{p'}}\lesssim |t|^{-d(1/p-1/2)},\qquad p\in[2d/(d+2),2],\,\,p'=p/(p-1),
\end{equation}
for any $t\in\mathbb{R}\setminus\{0\}$. The Strichartz estimates below then follow from the general theorem of Keel-Tao \cite{KeTa}.  

\begin{proposition}\label{StricEst} (Strichartz estimates) Assume that $d\geq 3$ and $I=(a,b)\subseteq \R$ is a bounded open interval. 
 
(i) If $\phi\in L^2(\H^d)$ then
\begin{equation}\label{Strichartz1}
\|e^{it\Delta_\g}\phi\|_{(L^\infty_tL^2_x\cap L^2_tL^{2d/(d-2)}_x)(\H^d\times I)}\lesssim\|\phi\|_{L^2}.
\end{equation}

(ii) If $F\in (L^1_tL^2_x+L^2_tL^{2d/(d+2)}_x)(\H^d\times I)$ then
\begin{equation}\label{Strichartz2}
\Big|\Big|\int_{a}^te^{i(t-s)\Delta_\g}F(s)\,ds\Big|\Big|_{(L^\infty_tL^2_x\cap L^2_tL^{2d/(d-2)}_x)(\H^d\times I)}\lesssim\|F\|_{(L^1_tL^2_x+L^2_tL^{2d/(d+2)}_x)(\H^d\times I)}.
\end{equation}
\end{proposition}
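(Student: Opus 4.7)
The plan is to reduce both bounds to the abstract endpoint Strichartz theorem of Keel-Tao \cite{KeTa}, invoking only two geometry-specific inputs: the $L^2$-unitarity of $U(t):=e^{it\Delta_\g}$ and the dispersive bound \eqref{dispersive}. The sentence immediately preceding the proposition confirms this is the intended route.

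First I would verify the two abstract hypotheses. From \eqref{fou2} and the Plancherel identity \eqref{Plancherel}, $U(t)$ acts as multiplication by the unimodular Fourier symbol $e^{-it(\lambda^2+\rho^2)}$, hence is an $L^2(\H^d)$-isometry; this already yields the $L^\infty_tL^2_x$ half of \eqref{Strichartz1}. Specializing \eqref{dispersive} to the endpoint $p=2d/(d+2)$ gives decay exponent $d(1/p-1/2)=1$, namely
\begin{equation*}
\|U(t-s)g\|_{L^{2d/(d-2)}(\H^d)}\lesssim |t-s|^{-1}\|g\|_{L^{2d/(d+2)}(\H^d)}.
\end{equation*}
These two inputs are precisely what Keel-Tao require at the sharp admissible pair $(q,r)=(2,2d/(d-2))$, which satisfies $2/q+d/r=d/2$.

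Given these, I would then invoke the Keel-Tao bilinear interpolation argument as a black box: it produces the homogeneous endpoint bound $\|U(t)\phi\|_{L^2_tL^{2d/(d-2)}_x}\lesssim\|\phi\|_{L^2}$, completing \eqref{Strichartz1}. The hypothesis $d\geq 3$ is what keeps the exponent $2d/(d-2)$ finite and away from the forbidden Keel-Tao endpoint at $(q,r)=(2,\infty)$. For the inhomogeneous bound \eqref{Strichartz2}, the same machinery directly delivers the double-endpoint retarded estimate $L^2_tL^{2d/(d+2)}_x\to L^2_tL^{2d/(d-2)}_x$, while the mixed pairings ($L^1_tL^2_x$ inputs and $L^\infty_tL^2_x$ outputs) follow from \eqref{Strichartz1} by duality, Minkowski's inequality, and a Christ-Kiselev argument to pass from the untruncated-in-$s$ integral $\int_I U(t)U(s)^*F(s)\,ds$ to the retarded Duhamel expression.

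The only genuine difficulty is the endpoint itself: at $(2,2d/(d-2))$ a naive $TT^*$ plus Hardy-Littlewood-Sobolev argument loses a logarithm, and the whole point of Keel-Tao is the combinatorial bilinear decomposition that rescues it. Their proof is purely abstract once $L^2$-unitarity and the $|t-s|^{-1}$ dispersive decay are in hand, so the transition from $\mathbb{R}^d$ to $\H^d$ requires no modification and one may simply cite \cite{KeTa}.
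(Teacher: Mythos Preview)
Your proposal is correct and matches the paper's approach exactly: the paper gives no proof beyond the sentence ``The Strichartz estimates below then follow from the general theorem of Keel--Tao \cite{KeTa}'' immediately preceding the proposition, and your write-up is a faithful unpacking of that citation using unitarity and the dispersive bound \eqref{dispersive}. One minor remark: the cleanest way to feed into Keel--Tao's theorem as literally stated is with the full $L^1\to L^\infty$ decay $|t|^{-d/2}$ (so $\sigma=d/2$, making $(2,2d/(d-2))$ the allowed endpoint), which is available in the cited references even though \eqref{dispersive} records only the restricted range $p\in[2d/(d+2),2]$; your phrasing in terms of the single $|t|^{-1}$ bound at $p=2d/(d+2)$ is morally right but would place you at the abstract $\sigma=1$ forbidden endpoint if taken too literally.
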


To exploit these estimates in dimension $d=3$, for any interval $I\subseteq\R$ and $f\in C(I:H^{-1}(\H^3))$ we define
\begin{equation}\label{mainnorms}
\begin{split}
&\|f\|_{Z(I)}:=\|f\|_{L^{10}_{t,x}(\H^3\times I)},\\
&\|f\|_{S^k(I)}:=\|(-\Delta)^{k/2}f\|_{(L^\infty_tL^2_x\cap L^2_tL^{6}_x)(\H^3\times I)},\qquad k\in[0,\infty)\\
&\|f\|_{N^k(I)}:=\|(-\Delta)^{k/2}f\|_{(L^1_tL^2_x+L^2_tL^{6/5}_x)(\H^3\times I)},\qquad k\in[0,\infty).
\end{split}
\end{equation}
We use the $S^1$ norms to estimate solutions of linear and nonlinear Schr\"{o}dinger equations. Nonlinearities are estimated using the $N^1$ norms. The $L^{10}$ norm is the ``scattering'' norm, which controls the existence of strong solutions of the nonlinear Schr\"{o}dinger equation, see Proposition \ref{localwp} and Proposition \ref{stability} below.

\subsection{Some lemmas}\label{lemmas}

In this subsection we collect and prove several lemmas that will be used later in the paper. For $N>0$ we define the operator $P_N:L^2(\mathbb{H}^3)\to L^2(\mathbb{H}^3)$,
\begin{equation}\label{proj}
\begin{split}
&P_N:=N^{-2}\Delta_\g e^{N^{-2}\Delta_\g},\\
&\widetilde{P_Nf}(\lambda,\omega)=-N^{-2}(\lambda^2+1)e^{-N^{-2}(\lambda^2+1)}\widetilde{f}(\lambda,\omega).
\end{split}
\end{equation}
One should think of $P_N$ as a substitute for the usual Littlewood-Paley projection operator in Euclidean spaces that restricts to frequencies of size $\approx N$; this substitution is necessary in order to have a suitable $L^p$ theory for these operators, since only real-analytic multipliers can define bounded operators on $L^p(\mathbb{H}^3)$ (see \cite{ClSt}). In view of the Fourier inversion formula
\begin{equation}\label{proj2}
\begin{split}
&P_Nf(x)=\int_{\mathbb{H}^3}f(y)P_N(d(x,y))\,d\mu(y),\\
&|P_N(r)|\lesssim N^3(1+Nr)^{-5}e^{-4r}.
\end{split}
\end{equation}

The estimates in the following lemma will be used in Section \ref{profile}. 

\begin{lemma}\label{lem12}
(i) Given $\epsilon\in(0,1]$ there is $R_\epsilon\geq 1$ such that for any $x\in\mathbb{H}^3$, $N\geq 1$, and $f\in H^1(\mathbb{H}^3)$, 
\begin{equation*}
|P_Nf(x)|\lesssim N^{1/2}(\|f\cdot\mathbf{1}_{B(x,R_\epsilon N^{-1})}\|_{L^6(\mathbb{H}^3)}+\epsilon\|f\|_{L^6(\mathbb{H}^3)})
\end{equation*}
where $B(x,r)$ denotes the ball $B(x,r)=\{y\in\mathbb{H}^3:d(x,y)<r\}$.

(ii) For any $f\in H^1(\mathbb{H}^3)$,
\begin{equation*}
\|f\|_{L^6(\mathbb{H}^3)}\lesssim\|\nabla f\|^{1/3}_{L^2(\mathbb{H}^3)}\cdot\sup_{N\geq 1,\,x\in\mathbb{H}^3}\big[N^{-1/2}|P_Nf(x)|]^{2/3}.
\end{equation*}
\end{lemma}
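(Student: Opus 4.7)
\emph{Part (i).} Use the kernel representation $P_Nf(x)=\int_{\H^3}f(y)P_N(d(x,y))\,d\mu(y)$ together with the pointwise bound $|P_N(r)|\lesssim N^3(1+Nr)^{-5}e^{-4r}$. Split the $y$-integration at the geodesic ball $B(x,R/N)$ and apply H\"older with dual exponents $(6,6/5)$, reducing everything to estimates on the $L^{6/5}$-norm of the radial kernel on and off this ball. Using the hyperbolic volume element $\sinh^2(r)\,dr\,d\omega$ and substituting $u=Nr$, one finds
$$\|P_N(d(x,\cdot))\|_{L^{6/5}(B(x,R/N))}\lesssim N^{1/2},\qquad \|P_N(d(x,\cdot))\|_{L^{6/5}(\H^3\setminus B(x,R/N))}\lesssim N^{1/2}R^{-5/2},$$
the decay factor $R^{-5/2}$ coming from the tail of the polynomial factor $(1+Nr)^{-5}$. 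Choosing $R_\epsilon\sim\epsilon^{-2/5}$ makes the second quantity $\leq\epsilon N^{1/2}\|f\|_{L^6}$ and completes part (i).

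\emph{Part (ii).} Write $A:=\sup_{N\geq 1,\,x}N^{-1/2}|P_Nf(x)|$. The central single-scale estimate is the pointwise interpolation
$$|P_Nf(x)|^6=|P_Nf(x)|^4\cdot|P_Nf(x)|^2\leq A^4N^2\,|P_Nf(x)|^2\qquad(N\geq 1),$$
which upon integration and use of the Plancherel-type bound $\|P_Nf\|_{L^2}\lesssim N^{-1}\|\nabla f\|_{L^2}$ (immediate from the symbol $u^{1/2}e^{-u}$ with $u=N^{-2}(\lambda^2+1)$, which is bounded uniformly in $\lambda$) yields
$$\|P_Nf\|_{L^6}^6\lesssim A^4\|\nabla f\|_{L^2}^2\qquad\text{uniformly for }N\geq 1.$$

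To pass from single-scale to $f$ itself, use the heat-semigroup reconstruction
$$f=e^{\Delta_\g}f-2\int_1^\infty P_Nf\,\frac{dN}{N},$$
valid in $L^2$ by the spectral gap $-\Delta_\g\geq\rho^2=1$ on $\H^3$, which forces $e^{t\Delta_\g}f\to 0$ in $L^2$ as $t\to\infty$. The low-frequency contribution is controlled via $e^{\Delta_\g}f=-(-\Delta_\g)^{-1}P_1f$ and the $L^6$-boundedness of $(-\Delta_\g)^{-1}$ (its symbol $(\lambda^2+1)^{-1}$ is analytic in $\mathcal{T}_6=\{|\Im\lambda|<2/3\}$ and satisfies \eqref{difeq}), combined with the single-scale bound for $N=1$: $\|e^{\Delta_\g}f\|_{L^6}\lesssim\|P_1f\|_{L^6}\lesssim A^{2/3}\|\nabla f\|_{L^2}^{1/3}$. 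For the high-frequency integral, decompose dyadically $h=\sum_{k\geq 0}H_k$ with $H_k=-2\int_{2^k}^{2^{k+1}}P_Nf\,dN/N$; a direct symbol inspection shows that $H_k$ is essentially Fourier-supported in $|\lambda|\sim 2^k$, with $\|H_k\|_{L^2}\lesssim 2^{-k}\|\nabla f\|_{L^2}$ and $\|H_k\|_{L^\infty}\lesssim A\,2^{k/2}$. A Littlewood--Paley square-function inequality on $\H^3$, available through the Mihlin-type multiplier theorem of \cite{StTo} applied to symbols in the class \eqref{difeq}, together with the dyadic quasi-orthogonality of the $H_k$'s, then recombines the pieces to give $\|h\|_{L^6}^6\lesssim A^4\|\nabla f\|_{L^2}^2$.

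\emph{Main obstacle.} The last step is the delicate one: each $\|H_k\|_{L^6}^6\lesssim A^4\|\nabla f\|_{L^2}^2$ is \emph{uniform} in $k$, so naive Minkowski summation $\|h\|_{L^6}\leq\sum_k\|H_k\|_{L^6}$ diverges. The proof relies crucially on exploiting the orthogonality of the $H_k$'s at distinct dyadic frequency scales via the square-function inequality; establishing this rigorously on $\H^3$ is a nontrivial input, and the clean way to organize the interpolation between $\|H_k\|_{L^\infty}\lesssim A\,2^{k/2}$ and $\|H_k\|_{L^2}\lesssim 2^{-k}\|\nabla f\|_{L^2}$ inside the square function is the technical heart of the argument.
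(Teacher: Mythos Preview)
Your Part (i) is correct and matches the paper's argument exactly (you even get the sharper tail decay $R^{-5/2}$ where the paper records only $(1+R)^{-1}$, but both suffice).

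Your Part (ii) has a genuine gap at precisely the point you flag as the ``main obstacle.'' Even granting a square-function inequality $\|h\|_{L^6}^6 \lesssim \int(\sum_k|H_k|^2)^3\,d\mu$, your two stated bounds $\|H_k\|_{L^\infty}\lesssim A\,2^{k/2}$ and $\|H_k\|_{L^2}\lesssim 2^{-k}\|\nabla f\|_{L^2}$ are \emph{not} enough to sum: expanding the cube, ordering $k_1\le k_2\le k_3$, putting the two smallest in $L^\infty$ and the largest in $L^2$ gives
\[
\sum_{k_1\le k_2\le k_3} A^4\,2^{k_1+k_2}\cdot 2^{-2k_3}\|\nabla f\|_{L^2}^2
\;\sim\; A^4\|\nabla f\|_{L^2}^2\sum_{k_3\ge 0}1,
\]
which diverges. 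What is missing is the $\ell^2$ refinement $\sum_k 2^{2k}\|H_k\|_{L^2}^2 \lesssim \|\nabla f\|_{L^2}^2$ (true by a multiplier calculation, but you never invoke it), and separately a justification of the reverse square-function estimate for your overlapping multipliers $m_k(\lambda)=e^{-2^{-2k-2}(\lambda^2+1)}-e^{-2^{-2k}(\lambda^2+1)}$ on $L^6(\mathbb{H}^3)$, which does not follow directly from the Stanton--Tomas theorem.

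The paper bypasses all of this with a much shorter and more elementary argument. It uses the continuous reproducing formula $f=c\int_0^\infty N^{-1}P_Nf\,dN$, expands $|f|^6$ as a six-fold integral over $0<N_1\le\cdots\le N_6$, bounds the four smallest factors pointwise by $|P_{N_i}f|\le AN_i^{1/2}$ (the supremum over $N\ge 1$ controls all $N>0$ via the spectral gap, since $N^{-1/2}|P_Nf|\lesssim |P_2f|$ for $N<1$), and integrates them out to produce a factor $N_5^2$. The remaining double integral is then controlled by the $g$-function identity $\int_{\mathbb{H}^3}\int_0^\infty N|P_Nf|^2\,dN\,d\mu=c\|\nabla f\|_{L^2}^2$. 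No dyadic decomposition, no square function, no summability issue: the ordering of the six continuous frequencies does all the work.
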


\begin{proof}[Proof of Lemma \ref{lem12}] (i) The inequality follows directly from \eqref{proj2}:
\begin{equation*}
\begin{split}
|P_Nf(x)|&\lesssim \int_{B(x,R_\epsilon N^{-1})}|f(y)|\,|P_N(d(x,y))|\,d\mu(y)+\int_{{}^cB(x,R_\epsilon N^{-1})}|f(y)|\,|P_N(d(x,y))|\,d\mu(y)\\
&\lesssim \|f\cdot\mathbf{1}_{B(x,R_\epsilon N^{-1})}\|_{L^6(\mathbb{H}^3)}\cdot A_{N,0,6/5}+\|f\|_{L^6(\mathbb{H}^3)}\cdot A_{N,R_\eps,6/5},
\end{split}
\end{equation*}
where, for $R\in[0,\infty)$, $N\in[1,\infty)$ and $p\in[1,2]$
\begin{equation*}
\begin{split}
A_{N,R,p}:&=\Big[\int_{d(\mathbf{0},y)\geq RN^{-1}}|P_N(d(0,y))|^p\,d\mu(y)\Big]^{1/p}\lesssim \Big[\int_{RN^{-1}}^\infty|P_N(r)|^{p}(\sh r)^2\,dr\Big]^{1/p}\\
&\lesssim N^3\Big[\int_{RN^{-1}}^\infty(1+Nr)^{-5p}r^2\,dr\Big]^{1/p}\lesssim N^{3-3/p}(1+R)^{-1}.
\end{split}
\end{equation*}
The inequality follows if $R_\eps=1/\epsilon$.

(ii) For any $f\in H^1(\mathbb{H}^3)$ we have the identity
\begin{equation}\label{ml9}
f=c\int_{N=0}^\infty N^{-1}P_N(f)\,dN.
\end{equation}
Thus, with $A:=\sup_{N\geq 0}\|N^{-1/2}P_Nf\|_{L^\infty(\mathbb{H}^3)}$
\begin{equation*}
\begin{split}
\int_{\mathbb{H}^3}|f|^6\,d\mu&\lesssim \int_{\mathbb{H}^3}\int_{0\leq N_1\leq\ldots\leq N_6}|P_{N_1}f|\cdot\ldots\cdot|P_{N_6}f|\,\frac{dN_1}{N_1}\ldots \frac{dN_6}{N_6}d\mu\\
&\lesssim A^4\int_{\mathbb{H}^3}\int_{0\leq N_5\leq N_6}N_5^2|P_{N_5}f||P_{N_6}f|\,\frac{dN_5}{N_5}\frac{dN_6}{N_6}d\mu\\
&\lesssim A^4\int_{\mathbb{H}^3}\int_0^\infty N|P_Nf|^2\,dN d\mu.
\end{split}
\end{equation*}
The claim follows since
\begin{equation*}
\int_{\mathbb{H}^3}\int_0^\infty N|P_Nf|^2\,dN d\mu=c\|(-\Delta)^{1/2}f\|_{L^2(\mathbb{H}^3)}^2,
\end{equation*}
as a consequence of the Plancherel theorem and the definition of the operators $P_N$, and, for any $N\in[0,1)$,
\begin{equation}\label{ml0}
\|N^{-1/2}P_Nf\|_{L^\infty(\mathbb{H}^3)}\lesssim \|P_2f\|_{L^\infty(\mathbb{H}^3)}.
\end{equation}
\end{proof}

We will also need the following technical estimate:

\begin{lemma}\label{locsmo}
Assume $\psi\in H^1(\mathbb{H}^3)$ satisfies
\begin{equation}\label{ml1}
\|\psi\|_{H^1(\mathbb{H}^3)}\leq 1,\qquad \sup_{K\geq 1,\,t\in\mathbb{R},\,x\in\mathbb{H}^3}K^{-1/2}|P_Ke^{it\Delta_\g}\psi(x)|\leq\delta,
\end{equation}
for some $\delta\in (0,1]$. Then, for any $R>0$ there is $C(R)\geq 1$ such that 
\begin{equation}\label{ml2}
N^{1/2}\|\nabla_\g e^{it\Delta_\g}\psi\|_{L^5_tL^{15/8}_x(B(x_0,RN^{-1})\times(t_0-R^2N^{-2},t_0+R^2N^{-2}))}\leq C(R)\delta^{1/20}
\end{equation}
for any $N\geq 1$, any $t_0\in\mathbb{R}$, and any $x_0\in\mathbb{H}^3$.
\end{lemma}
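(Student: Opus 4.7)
The plan is to perform a frequency decomposition $e^{it\Delta_\g}\psi = c\int_0^\infty K^{-1} u_K\,dK$ with $u_K := P_K e^{it\Delta_\g}\psi$ via the identity \eqref{ml9}, and to estimate each dyadic piece on the spacetime box $\Omega := B(x_0, RN^{-1}) \times (t_0 - R^2 N^{-2}, t_0 + R^2 N^{-2})$ through two complementary bounds. The hypothesis \eqref{ml1}, combined with \eqref{ml0} extended to $K \in (0,1)$, gives $\|u_K\|_{L^\infty_{t,x}} \lesssim \delta K^{1/2}$ for every $K > 0$, so Bernstein yields $\|\nabla_\g u_K\|_{L^\infty_{t,x}} \lesssim \delta K^{3/2}$. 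On the other side, from $\|\psi\|_{H^1}\leq 1$ and Proposition \ref{StricEst}, one has the Strichartz bound $\|\nabla_\g u_K\|_{L^5_t L^{30/11}_x(\H^3 \times \R)} \lesssim a_K$, where $a_K := \|\nabla P_K \psi\|_{L^2}$ satisfies $\sum_K a_K^2 \lesssim 1$.

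Log-convexity interpolation between these two bounds, followed by H\"older's inequality on $\Omega$ (using $|B(x_0, RN^{-1})|\approx R^3 N^{-3}$, $|I|\approx R^2 N^{-2}$, and the embedding $L^{30/(11(1-\theta))}(B) \hookrightarrow L^{15/8}(B)$ on a bounded ball), produces, after a direct computation of the volume factors, the master per-frequency inequality
\[
N^{1/2} \|\nabla_\g u_K\|_{L^5_t L^{15/8}_x(\Omega)} \leq C(R)\,\bigl[\delta (K/N)^{3/2}\bigr]^{\theta}\,a_K^{1-\theta}, \qquad \theta \in [0,1].
\]
Taking $\theta=1$ at low frequencies $K\leq N$ yields a geometric dyadic sum bounded by $C(R)\delta$. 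For an intermediate window $N < K \leq N\delta^{-c}$ (with $c\in(0,2/3)$ to be tuned), one chooses $\theta\in(0,1)$ adaptively so that each dyadic term is $\lesssim \delta^{1/20}$; the window contains $\lesssim \log(1/\delta)$ scales, contributing a logarithmic loss that is absorbed by lowering $c$ slightly.

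The main obstacle is the remaining high-frequency block $K > N\delta^{-c}$, where the bare Strichartz bound $\|\nabla_\g P_{>M} e^{it\Delta_\g}\psi\|_{L^5 L^{30/11}} \lesssim 1$ is insensitive to the smallness hypothesis. The resolution is to invoke Lemma \ref{lem12}(ii) pointwise in time: it gives $\|e^{it\Delta_\g}\psi(t)\|_{L^6(\H^3)} \lesssim \|\nabla_\g\psi\|_{L^2}^{1/3}\delta^{2/3}\lesssim \delta^{2/3}$, and the $L^6$-boundedness of $P_K$, Bernstein, and H\"older $L^6(B)\hookrightarrow L^{15/8}(B)$ (factor $|B|^{11/30}\approx R^{11/10}N^{-11/10}$) together transfer this $L^6$-smallness of $u(t)$ into a $\delta^{2/3}$ factor for the high-frequency block at the cost of $K/N$, which is absorbed by the volume. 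Optimizing $c$ then produces the final exponent $\delta^{1/20}$.
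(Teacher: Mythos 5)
Your low- and intermediate-frequency arguments are sound and run roughly parallel to the paper's estimate \eqref{ml3} (you use $L^\infty$ directly where the paper interpolates to $L^{12}$, and you invoke the Strichartz bound $a_K$ where the paper uses it implicitly; either way the per-frequency bound for $K\lesssim N$ has the right shape). The gap is in the high-frequency block, and it is fatal: your substitute for local smoothing does not work.

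Concretely, applying Lemma~\ref{lem12}(ii) pointwise in time, the $L^6$-boundedness of $P_K$, Bernstein (gain $K$), and H\"older on $B(x_0,RN^{-1})$ (gain $|B|^{11/30}\approx N^{-11/10}$) followed by $L^5_t$ over $|I|\approx N^{-2}$ yields
\begin{equation*}
N^{1/2}\|\nabla_\g P_Ke^{it\Delta_\g}\psi\|_{L^5_tL^{15/8}_x(\Omega)}\lesssim N^{1/2}\cdot N^{-11/10}\cdot N^{-2/5}\cdot K\delta^{2/3}=(K/N)\,\delta^{2/3},
\end{equation*}
which \emph{grows} with $K$; the resulting sum $\int_{K>N\delta^{-c}}(K/N)\delta^{2/3}\,dK/K$ diverges. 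The claim that $K/N$ "is absorbed by the volume" is mistaken: the volume factors are already spent producing the $N^{-11/10}\cdot N^{-2/5}$ above, and they cannot cancel a factor that grows in $K$ independently of $N$. Nor can you fall back on the Strichartz coefficients $a_K$ for the high block, since $\sum_K a_K^2\lesssim 1$ does not imply $\sum_K a_K<\infty$. What is actually needed for $K\gtrsim N$ is a bound that \emph{decays} as $K\to\infty$ on the spacetime box of spatial scale $N^{-1}$, and this is exactly what the paper's local smoothing estimate \eqref{ml4},
\begin{equation*}
\|\nabla_\g(P_Ke^{it\Delta_\g}\psi)\|_{L^2_{x,t}(B(\mathbf{0},N^{-1})\times(-N^{-2},N^{-2}))}\lesssim (NK)^{-1/2},\qquad K\geq N,
\end{equation*}
provides; after interpolation with the energy bound and H\"older it gives the decaying factor $N^{1/5}K^{-1/5}$ in \eqref{ml5}. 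That estimate is the crux of Lemma~\ref{locsmo} and is obtained via a Morawetz-type identity with a carefully constructed weight $\widetilde a$ (see \eqref{aprop}--\eqref{aprop6}); there is no way to extract it from the soft $L^6$-smallness of Lemma~\ref{lem12}(ii), which carries no information about spatial concentration of the gradient at scale $N^{-1}$. Without this ingredient the proof does not close.
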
 

\begin{proof}[Proof of Lemma \ref{locsmo}] We may assume $R=1$, $x_0=\mathbf{0}$, $t_0=0$. It follows from \eqref{ml1} that for any $K>0$ and $t\in\mathbb{R}$
\begin{equation*}
\|P_Ke^{it\Delta_\g}\psi\|_{L^\infty(\mathbb{H}^3)}\lesssim \delta K^{1/2},\qquad \|P_Ke^{it\Delta_\g}\psi\|_{L^6(\mathbb{H}^3)}\lesssim 1,
\end{equation*}
therefore, by interpolation,
\begin{equation*}
\|P_Ke^{it\Delta_\g}\psi\|_{L^{12}(\mathbb{H}^3)}\lesssim \delta^{1/2} K^{1/4}.
\end{equation*}
Thus, for any $K>0$ and $t\in\mathbb{R}$
\begin{equation*}
\|\nabla_\g(P_Ke^{it\Delta_\g}\psi)\|_{L^{12}(\mathbb{H}^3)}\lesssim \delta^{1/2} K^{1/4}(K+1), 
\end{equation*}
which shows that, for any $K>0$ and $N\geq 1$,
\begin{equation}\label{ml3}
N^{1/2}\|\nabla_\g (P_Ke^{it\Delta_\g}\psi)\|_{L^5_tL^{15/8}_x(B({\bf{0}},N^{-1})\times(-N^{-2},N^{-2}))}\lesssim \delta^{1/2} K^{1/4}(K+1)N^{-5/4}.
\end{equation}

We will prove below that for any $N\geq 1$ and $K\geq N$
\begin{equation}\label{ml4}
\|\nabla_\g (P_Ke^{it\Delta_\g}\psi)\|_{L^2_{x,t}(B({\bf{0}},N^{-1})\times(-N^{-2},N^{-2}))}\lesssim (NK)^{-1/2}.
\end{equation}
Assuming this and using the energy estimate 
\begin{equation*}
\|\nabla_\g (P_Ke^{it\Delta_\g}\psi)\|_{L^\infty_tL^2_x(\mathbb{H}^3\times\mathbb{R})}\lesssim 1,
\end{equation*}
we have, by interpolation,
\begin{equation*}
\|\nabla_\g (P_Ke^{it\Delta_\g}\psi)\|_{L^5_tL^2_x(B({\bf{0}},N^{-1})\times(-N^{-2},N^{-2}))}\lesssim (NK)^{-1/5}.
\end{equation*}
Therefore, for any $N\geq 1$ and $K\geq N$
\begin{equation}\label{ml5}
N^{1/2}\|\nabla_\g (P_Ke^{it\Delta_\g}\psi)\|_{L^5_tL^{15/8}_x(B({\bf{0}},N^{-1})\times(-N^{-2},N^{-2}))}\lesssim N^{1/5}K^{-1/5}.
\end{equation}
The desired bound \eqref{ml2} follows from \eqref{ml3}, \eqref{ml5}, and the identity \eqref{ml9}.

It remains to prove the local smoothing bound \eqref{ml4}. Many such estimates are known in more general settings, see for example \cite{Do}. We provide below a simple self-contained proof specialized to our case. Assuming $N\geq 1$ fixed, we will construct a real-valued function $a=a_N\in C^\infty(\mathbb{H}^3)$ with the properties
\begin{equation}\label{aprop}
\begin{split}
&|\D^\al a\D_\al a|\lesssim 1\qquad\text{ in }\mathbb{H}^3,\\
&|\Delta_\g(\Delta_\g a)|\lesssim N^3\qquad\text{ in }\mathbb{H}^3,\\
&X^\al X_\al\cdot N\mathbf{1}_{B({\bf{0}},N^{-1})}\lesssim X^\al X^\be\D_\al\D_\be a\qquad\text{ in }\mathbb{H}^3\text{ for any vector-field }X\in T(\mathbb{H}^3).
\end{split}
\end{equation}
Assuming such a function is constructed, we define the Morawetz action
\begin{equation*}
M_a(t)=2\Im\int_{\mathbb{H}^3}\D^\al a(x)\cdot \overline{u}(x)\D_\al u(x)\,d\mu(x),
\end{equation*}
where $u:=P_Ke^{it\Delta_\g}\psi$. A formal computation (see \cite[Proposition 4.1]{IoSt} for a complete justification) shows that
\begin{equation*}
\partial_tM_a(t)=4\Re\int_{\mathbb{H}^3}\D^\al\D^\be a\cdot\D_\al u\D_\be\overline{u}\,d\mu-\int_{\mathbb{H}^3}\Delta_\g(\Delta_\g a)\cdot |u|^2\,d\mu.
\end{equation*}
Therefore, by integrating on the time interval $[-N^{-2},N^{-2}]$ and using the first two properties in \eqref{aprop},
\begin{equation*}
\begin{split}
4\int_{-N^{-2}}^{N^{-2}}&\int_{\mathbb{H}^3}\Re(\D^\al\D^\be a\cdot\D_\al u\D_\be\overline{u})\,d\mu dt\\
&\leq 2\sup_{t\in[-N^{-2},N^{-2}]}|M_a(t)|+\int_{-N^{-2}}^{N^{-2}}\int_{\mathbb{H}^3}|\Delta_\g(\Delta_\g a)|\cdot |u|^2\,d\mu dt\\
&\lesssim \sup_{t\in[-N^{-2},N^{-2}]}\|u(t)\|_{L^2(\mathbb{H}^3)}\|u(t)\|_{H^1(\mathbb{H}^3)}+N^3\int_{-N^{-2}}^{N^{-2}}\|u(t)\|_{L^2(\mathbb{H}^3)}^2\,dt\\
&\lesssim K^{-1}+NK^{-2}.
\end{split}
\end{equation*}
The desired bound \eqref{ml4} follows, in view of the inequality in the last line of \eqref{aprop} and the assumption $K\geq N$ since $a$ is real valued.

Finally, it remains to construct a real-valued function $a\in C^\infty(\mathbb{H}^3)$ satisfying \eqref{aprop}. We are looking for a function of the form
\begin{equation}\label{aprop2}
a(x):=\widetilde{a}(\ch r(x)),\qquad r=d(\mathbf{0},x),\qquad \widetilde{a}\in C^\infty([1,\infty)).
\end{equation}
To prove the inequalities in \eqref{aprop} it is convenient to use coordinates induced by the Iwasawa decomposition of the group $\mathbb{G}$: we define the global diffeomorphism
\begin{equation*}
\Phi:\mathbb{R}^2\times\mathbb{R}\to\mathbb{H}^3,\qquad \Phi(v^1,v^2,s)={}^{tr}(\ch s +e^{-s}|v|^2/2,\sh s+e^{-s}|v|^2/2,e^{-s}v^1,e^{-s}v^2),
\end{equation*}
and fix the global orthonormal frame
\begin{equation*}
e_3:=\partial_s,\quad e_1:=e^s\partial_{v^1},\quad e_2:=e^s\partial_{v^2}.
\end{equation*}
With respect to this frame, the covariant derivatives are
\begin{equation*}
\D_{e_\al}e_\be=\delta_{\al\be}e_3,\,\D_{e_\al}e_3=-e_\al,\,\D_{e_3}e_\al=\D_{e_3}e_3=0,\qquad\text{ for }\al,\be=1,2.
\end{equation*}
See \cite[Section 2]{IoSt} for these calculations. In this system of coordinates we have
\begin{equation}\label{aprop3}
\ch r=\ch s+e^{-s}|v|^2/2.
\end{equation}
Therefore, for $a$ as in \eqref{aprop2}, we have
\begin{equation*}
\D_3a=(\sh s-e^{-s}|v|^2/2)\cdot\widetilde{a}'(\ch r),\quad \D_1a=v^1\cdot\widetilde{a}'(\ch r),\quad \D_2a=v^2\cdot\widetilde{a}'(\ch r). 
\end{equation*}
Using the formula
\begin{equation*}
\D_\al\D_\be a=e_\al(e_\be(a))-(\D_{e_\al}e_\be)(a),\qquad\al,\be=1,2,3.
\end{equation*}
we compute the Hessian
\begin{equation*}
\begin{split}
&\D_1\D_1 a=(v^1)^2\widetilde{a}''(\ch r)+\ch r\widetilde{a}'(\ch r),\,\,\D_2\D_2 a=(v^2)^2\widetilde{a}''(\ch r)+\ch r\widetilde{a}'(\ch r),\\
&\D_1\D_2 a=\D_2\D_1 a=v^1v^2\widetilde{a}''(\ch r),\,\,\D_3\D_3 f=(\sh s-e^{-s}|v|^2/2)^2\widetilde{a}''(\ch r)+\ch r\widetilde{a}'(\ch r),\\
&\D_1\D_3 a=\D_3\D_1 a=v^1(\sh s-e^{-s}|v|^2/2)\widetilde{a}''(\ch r),\\
&\D_2\D_3 a=\D_3\D_2 a=v^2(\sh s-e^{-s}|v|^2/2)\widetilde{a}''(\ch r). 
\end{split}
\end{equation*}
Therefore, using again \eqref{aprop3}
\begin{equation}\label{aprop5}
\D^\al a\D_\al a=(\sh r)^2(\widetilde{a}'(\ch r))^2,\,\,\Delta_\g a=((\ch r)^2-1)\widetilde{a}''(\ch r)+3(\ch r)\widetilde{a}'(\ch r),
\end{equation}
and
\begin{equation}\label{aprop6}
X^\al X^\be\D_\al\D_\be a =\ch r\widetilde{a}'(\ch r)|X|^2+\widetilde{a}''(\ch r)(X^1v^1+X^2v^2+X^3(\sh s-e^{-s}|v|^2/2))^2.
\end{equation}

We fix now $\widetilde{a}$ such that
\begin{equation*}
\widetilde{a}'(y):=(y^2-1+N^{-2})^{-1/2}, \qquad y\in[1,\infty).
\end{equation*}
The first identity in \eqref{aprop} follows easily from \eqref{aprop5}. To prove the second identity in \eqref{aprop}, we use again \eqref{aprop5} to derive
\begin{equation*}
\Delta_\g a=b(\ch r)\quad \text{ where }\quad b(y)=3y(y^2-1+N^{-2})^{-1/2}-y(y^2-1)(y^2-1+N^{-2})^{-3/2}.
\end{equation*}
Using \eqref{aprop5} again, it follows that
\begin{equation*}
|\Delta_\g(\Delta_\g a)|\lesssim y^2(y^2-1+N^{-2})^{-3/2}\quad\text{ where }\quad y=\ch r,
\end{equation*}
which proves the second inequality in \eqref{aprop}. Finally, using \eqref{aprop6},
\begin{equation*}
\begin{split}
X^\al X^\be\D_\al\D_\be a&\geq \ch r\widetilde{a}'(\ch r)|X|^2-((\ch r)^2-1)|\widetilde{a}''(\ch r)|\,|X|^2\\
&=N^{-2}\ch r((\ch r)^2-1+N^{-2})^{-3/2}|X|^2,
\end{split}
\end{equation*}
which proves the last inequality in \eqref{aprop}. This completes the proof of the lemma.
\end{proof}

\section{Proof of the main theorem}\label{mainproof}

In this section we outline the proof of Theorem \ref{Main1}. The main ingredients are a local well-posedness/stability theory for the initial-value problem, which in our case relies only on the Strichartz estimates in Proposition \ref{StricEst}, a global Morawetz inequality, which exploits the defocusing nature of the problem, and a compactness argument, which depends on the Euclidean analogue of Theorem \ref{Main1} proved in \cite{CKSTTcrit}. 

We start with the local well-posedness theory. Let
\begin{equation*}
\mathcal{P}=\{(I,u):I\subseteq\mathbb{R}\text{ is an open interval and }u\in C(I:H^1(\H^3))\}
\end{equation*}
with the natural partial order
\begin{equation*}
(I,u)\leq (I',u')\text{ if and only if }I\subseteq I'\text{ and }u'(t)=u(t)\text{ for any }t\in I.
\end{equation*}

\begin{proposition}\label{localwp}
(Local well-posedness) Assume $\phi\in H^1(\H^3)$. Then there is a unique maximal solution $(I,u)=(I(\phi),u(\phi))\in \mathcal{P}$, $0\in I$, of the initial-value problem
\begin{equation}\label{NLSivp}
(i\partial_t+\Delta_\g)u=u|u|^{4},\qquad u(0)=\phi
\end{equation}
on $\H^3\times I$. In addition $\|u\|_{S^1(J)}<\infty$ for any compact interval $J\subseteq I$, the mass $E^0(u)$ and the energy $E^1(u)$ defined in \eqref{conserve} are constant on $I$, and
\begin{equation}\label{blowup}
\begin{split}
&\text{ if }I_+:=I\cap[0,\infty)\text{ is bounded then }\|u\|_{Z(I_+)}=\infty,\\
&\text{ if }I_-:=I\cap(-\infty,0]\text{ is bounded then }\|u\|_{Z(I_-)}=\infty.
\end{split}
\end{equation}
\end{proposition}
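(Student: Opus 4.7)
The plan is to run a Duhamel contraction in the Strichartz space $S^1(I)\cap Z(I)$ on a sufficiently small interval $I\ni 0$, following the standard 3D energy-critical template adapted to the hyperbolic setting. Three ingredients are needed: (i) the Strichartz estimates of Proposition \ref{StricEst}; (ii) the inequality $\|u\|_{Z(I)}\lesssim \|u\|_{S^1(I)}$, obtained by interpolating the admissible Strichartz pairs $(\infty,2)$ and $(2,6)$ to produce the admissible pair $(10,30/13)$ and then applying the Sobolev embedding \eqref{Sobemb} via $W^{30/13,1}\hookrightarrow L^{10}$; and (iii) the nonlinear estimate
\[
\|u|u|^4\|_{N^1(I)}\lesssim \|u\|_{Z(I)}^4\|u\|_{S^1(I)},
\]
which follows from H\"older's inequality after a fractional Leibniz bound for $(-\Delta_\g)^{1/2}$ acting on a product, valid in our range of exponents thanks to the multiplier theorem described around \eqref{difeq}.

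First I would fix $\phi\in H^1(\H^3)$ and define $\Phi(u)(t):=e^{it\Delta_\g}\phi-i\int_0^t e^{i(t-s)\Delta_\g}(u|u|^4)(s)\,ds$. By Strichartz and the nonlinear estimate one obtains $\|\Phi(u)\|_{S^1(I)}\le C\|\phi\|_{H^1}+C\|u\|_{Z(I)}^4\|u\|_{S^1(I)}$ together with an analogous Lipschitz bound on differences $\Phi(u)-\Phi(v)$. Choosing the interval $I$ so small that $\|e^{it\Delta_\g}\phi\|_{Z(I)}\le\eta$ for a small absolute constant $\eta$ (which is possible since the global Strichartz bound combined with Sobolev gives $\|e^{it\Delta_\g}\phi\|_{Z(\R)}<\infty$, and the norm is absolutely continuous in the interval $I$), the map $\Phi$ becomes a strict contraction on a suitable ball in $S^1(I)\cap Z(I)$, producing a unique local solution.

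Maximality is then obtained by the standard continuation procedure: take $I(\phi)$ to be the union of all open existence intervals containing $0$, and glue solutions together using uniqueness on overlaps (itself a consequence of the same contraction/Gronwall argument applied to the difference of two solutions). The bound $\|u\|_{S^1(J)}<\infty$ on compact $J\subset I$ follows at once from the local estimate, iterated finitely many times. For conservation of $E^0(u)$ and $E^1(u)$, I would first establish persistence of regularity: if $\phi\in H^2$, the same contraction at the $H^2$ level yields $u\in C(J:H^2)$ on compact $J\Subset I$, and at that regularity the formal identities $\partial_tE^0=\partial_tE^1=0$ are justified by a direct integration by parts; one then recovers the $H^1$ case by density and continuous dependence.

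Finally, the blow-up dichotomy \eqref{blowup} follows by a contrapositive. Suppose $I_+$ is bounded but $\|u\|_{Z(I_+)}<\infty$. Then $I_+$ can be partitioned into finitely many subintervals on each of which $\|u\|_{Z}$ is below the threshold needed to absorb the nonlinear term, and the estimates above give $\|u\|_{S^1(I_+)}<\infty$. A Duhamel/Strichartz argument then shows that $u(t)$ is Cauchy in $H^1(\H^3)$ as $t\nearrow \sup I_+$, and applying local existence from this $H^1$ limit extends $u$ strictly beyond $\sup I_+$, contradicting maximality; the negative case is symmetric. The main technical obstacle in this program is the nonlinear estimate in (iii), since a clean Leibniz rule for fractional powers of $-\Delta_\g$ is not automatic on $\H^3$: the symbol calculus demands real-analytic multipliers (cf.~\cite{ClSt}), so one must check that the symbols appearing in the decomposition of $(-\Delta_\g)^{1/2}(fg)$ lie in the class \eqref{difeq} on the appropriate tubes $\mathcal{T}_p$ before the classical Euclidean-style argument can be invoked.
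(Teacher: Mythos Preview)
Your proposal is correct and matches the paper's approach, which simply declares the result a ``standard consequence of the Strichartz estimates and Sobolev embedding theorem \eqref{Sobemb}'' and refers to \cite[Section 3]{CKSTTcrit} without further detail. The one point worth noting is that the technical obstacle you flag in (iii) is not actually present: the equivalence \eqref{sobiden}, $\|(-\Delta_\g)^{1/2}f\|_{L^p}\approx_p\|\,|\nabla_\g f|\,\|_{L^p}$, lets you replace the fractional power by the covariant gradient, after which the ordinary pointwise chain rule $|\nabla_\g(u|u|^4)|\lesssim |u|^4|\nabla_\g u|$ and H\"older in $L^2_tL^{6/5}_x$ give the nonlinear estimate directly, with no need for a symbolic Leibniz calculus on $\mathbb{H}^3$.
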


In other words, local-in-time solutions of the equation exist and extend as strong solutions as long as their spacetime $L^{10}_{x,t}$ norm does not blow up. We complement this with a stability result. 

\begin{proposition}\label{stability}
(Stability) Assume $I$ is an open interval, $\rho\in[-1,1]$, and $\widetilde{u}\in C(I:H^1(\H^3))$ satisfies the approximate  Schr\"{o}dinger equation
\begin{equation*}
(i\partial_t+\Delta_\g)\widetilde{u}=\rho\widetilde{u}|\widetilde{u}|^4+e\quad\text{ on }\mathbb{H}^3\times I.
\end{equation*}
Assume in addition that
\begin{equation}\label{ume}
\|\widetilde{u}\|_{L^{10}_{t,x}(\mathbb{H}^3\times I)}+\sup_{t\in I}\|\widetilde{u}(t)\|_{H^1(\H^3)}\leq M,
\end{equation}
for some $M\in[1,\infty)$. Assume $t_0 \in I$ and $u(t_0)\in H^1(\H^3)$ is such that the smallness condition
\begin{equation}\label{safetycheck}
\|u(t_0) - \widetilde{u}(t_0)\|_{H^1(\H^3)}+\| e \|_{N^1(I)}\leq \eps
\end{equation}
holds for some $0 < \eps < \eps_1$, where $\eps_1\leq 1$ is a small constant $\eps_1 = \eps_1(M) > 0$.

Then there exists a solution $u\in C(I:H^1(\mathbb{H}^3))$ of the Schr\"{o}dinger equation 
\begin{equation*}
(i\partial_t+\Delta_\g)u=\rho u|u|^4\text{ on }\mathbb{H}^3\times I,
\end{equation*}
and
\begin{equation}\label{output}
\begin{split}
\| u \|_{S^1(\mathbb{H}^3\times I)}+\|\widetilde{u}\|_{S^1(\mathbb{H}^3\times I)}&\leq C(M),\\
\| u - \widetilde u \|_{S^1(\mathbb{H}^3\times I)}&\leq C(M)\eps.
 \end{split}
\end{equation}
\end{proposition}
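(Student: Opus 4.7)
The plan is a standard long-time perturbation argument, adapted to the hyperbolic setting via Proposition~\ref{StricEst} and the norms $S^1,N^1,Z$ of \eqref{mainnorms}. Two basic ingredients are used throughout. First, $\|f\|_{Z(J)}\lesssim \|f\|_{S^1(J)}$, which follows from Strichartz at the admissible pair $(q,r)=(10,30/13)$ (in dimension three $2/q+3/r=3/2$) together with the Sobolev embedding $W^{1,30/13}(\H^3)\hookrightarrow L^{10}(\H^3)$ of \eqref{Sobemb}. Second, the fractional Leibniz-type bound
\begin{equation*}
\bigl\||u|^4u-|\widetilde u|^4\widetilde u\bigr\|_{N^1(J)}\lesssim\bigl(\|u\|_{Z(J)}^4+\|\widetilde u\|_{Z(J)}^4\bigr)\|u-\widetilde u\|_{S^1(J)}+\bigl(\|u\|_{Z(J)}^3+\|\widetilde u\|_{Z(J)}^3\bigr)\bigl(\|u\|_{S^1(J)}+\|\widetilde u\|_{S^1(J)}\bigr)\|u-\widetilde u\|_{Z(J)},
\end{equation*}
obtained on $\H^3$ from the comparability \eqref{sobiden} of $(-\Delta_\g)^{1/2}$ with $|\nabla_\g|$ in $L^p$, the multiplier bound \eqref{difeq}, and the pointwise inequality $\bigl||u|^4u-|\widetilde u|^4\widetilde u\bigr|\lesssim(|u|^4+|\widetilde u|^4)|u-\widetilde u|$ followed by H\"older with the admissible triple $(Z,L^{10}_tL^{30/13}_x,L^2_tL^{6/5}_x)$.

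The core step is a short-time stability claim: there exist $\eta_0=\eta_0(M)>0$ and $C_0=C_0(M)\ge 1$ such that whenever $J\subseteq I$ is an open subinterval with $\|\widetilde u\|_{Z(J)}\le \eta_0$, $t_*\in\overline J$, and $\delta\in(0,\eta_0]$ satisfies $\|u(t_*)-\widetilde u(t_*)\|_{H^1}+\|e\|_{N^1(J)}\le\delta$, a solution $u$ of the exact equation exists on $J$ and $\|u-\widetilde u\|_{S^1(J)}\le C_0\delta$. I would establish this by a contraction mapping argument for $w=u-\widetilde u$ applied to the Duhamel map
\begin{equation*}
\Phi(w)(t)=e^{i(t-t_*)\Delta_\g}(u(t_*)-\widetilde u(t_*))-i\int_{t_*}^{t}e^{i(t-s)\Delta_\g}\bigl[\rho\bigl(|\widetilde u+w|^4(\widetilde u+w)-|\widetilde u|^4\widetilde u\bigr)+e(s)\bigr]\,ds
\end{equation*}
in the ball $\{\|w\|_{S^1(J)}\le 2C_0\delta\}$, combining Proposition~\ref{StricEst}, the nonlinear bound above, and the a priori control $\|\widetilde u\|_{S^1(J)}\lesssim_M 1$ obtained by applying Strichartz to the equation satisfied by $\widetilde u$ on $J$ and absorbing the quintic term using $\|\widetilde u\|_{Z(J)}\le \eta_0$.

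To pass from a short interval to all of $I$, partition $I=\bigcup_{k=1}^{K(M)}I_k$ into consecutive open intervals on which $\|\widetilde u\|_{Z(I_k)}\le \eta_0/2$, with $t_0$ an endpoint of some $I_{k_0}$; this is possible with $K(M)\lesssim (M/\eta_0)^{10}$ by \eqref{ume}. Applying the short-time claim iteratively outward from $I_{k_0}$ in both time directions and writing $\delta_k$ for the error at the endpoint joining $I_{k-1}$ and $I_k$, one obtains $\delta_{k+1}\le (C_0+1)\delta_k+\|e\|_{N^1(I_{k+1})}$, so every $\delta_k$ is bounded by $(C_0+2)^{K(M)}\eps$. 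Choosing $\eps_1(M):=\eta_0(C_0+2)^{-K(M)}$ guarantees every invocation of the short-time claim remains legitimate, and summing the piecewise $S^1$-bounds then yields \eqref{output}. The main obstacle I expect is the verification of the fractional Leibniz bound on the curved manifold $\H^3$: although its structure mirrors the Euclidean case, one cannot directly invoke the usual Kato--Ponce or Christ--Weinstein product rules and must instead pass through the gradient identity \eqref{sobiden} and the multiplier characterization \eqref{difeq} before applying H\"older, a step that is clean in $\mathbb{R}^3$ but requires care here.
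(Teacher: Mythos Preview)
Your proposal is correct and follows exactly the standard long-time perturbation argument the paper invokes by reference (the paper does not give its own proof, stating only that Propositions~\ref{localwp} and~\ref{stability} are ``standard consequences of the Strichartz estimates and Sobolev embedding theorem~\eqref{Sobemb}, see for example \cite[Section~3]{CKSTTcrit}''). One remark: the obstacle you anticipate with the fractional Leibniz rule on $\H^3$ is not genuine here, since only one full derivative is needed and \eqref{sobiden} lets you work directly with the geometric gradient $\nabla_\g$, for which the ordinary chain rule on the $C^1$ nonlinearity $z\mapsto |z|^4z$ applies pointwise before invoking H\"older.
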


Both Proposition \ref{localwp} and Proposition \ref{stability} are standard consequences of the Strichartz estimates and Sobolev embedding theorem \eqref{Sobemb}, see for example \cite[Section 3]{CKSTTcrit}. We will use Proposition \ref{stability} with $\rho=0$ and with $\rho=1$ to estimate linear and nonlinear solutions on hyperbolic spaces. 

We need also the global Morawetz estimate proved in \cite[Proposition 4.1]{IoSt}.    

\begin{proposition}\label{MoraIneq}
Assume that $I\subseteq\R$ is an open interval, and $u\in C(I:H^1(\H^3))$ is a solution of the equation
\begin{equation*}
(i\partial_t+\Delta_\g)u=u|u|^4\text{ on }\H^3\times I.
\end{equation*}
Then, for any $t_1,t_2\in I$,
\begin{equation}\label{mor}
\|u\|_{L^6(\H^3\times[t_1,t_2])}^6\lesssim \sup_{t\in[t_1,t_2]}\|u(t)\|_{L^2(\H^3)}\|u(t)\|_{H^1(\H^3)}.
\end{equation}
\end{proposition}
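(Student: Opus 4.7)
\medskip

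The plan is to run the standard Morawetz multiplier argument with a well-chosen weight, exploiting the fact that in hyperbolic geometry the Laplacian of the distance function is bounded \emph{below} by a positive constant (unlike in Euclidean space, where it decays like $1/r$). Concretely, I would take $a(x):=r(x)=d(\mathbf{0},x)$ (or a smooth approximation thereof, to handle the origin) and define the Morawetz action
\begin{equation*}
M_a(t):=2\,\mathrm{Im}\int_{\H^3}\D^\alpha a(x)\,\overline{u(x,t)}\,\D_\alpha u(x,t)\,d\mu(x).
\end{equation*}
Since $|\D a|\equiv 1$, we immediately have $|M_a(t)|\lesssim \|u(t)\|_{L^2}\|\nabla_{\g} u(t)\|_{L^2}\lesssim \|u(t)\|_{L^2}\|u(t)\|_{H^1}$, which will furnish the boundary terms on the right-hand side of \eqref{mor}.

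Using the equation $(i\pr_t+\Delta_\g)u=u|u|^4$, a direct computation (differentiating under the integral, integrating by parts, and using the algebraic identity $|u|^4 \mathrm{Re}(\bar u \D_\alpha u)=\tfrac{1}{6}\D_\alpha |u|^6$) yields the Morawetz identity
\begin{equation*}
\pr_t M_a(t)=4\int_{\H^3}\D^\alpha\D^\beta a\cdot \mathrm{Re}(\D_\alpha \bar u\,\D_\beta u)\,d\mu-\int_{\H^3}\Delta_\g(\Delta_\g a)\,|u|^2\,d\mu+\frac{4}{3}\int_{\H^3}\Delta_\g a\cdot |u|^6\,d\mu.
\end{equation*}
For $a=r$ in $\H^3$, using polar coordinates around $\mathbf{0}$ one has $\D^\alpha\D^\beta r=\coth r\cdot g_{\mathrm{tang}}^{\alpha\beta}$ (a nonnegative tensor), $\Delta_\g r=2\coth r$, and—remarkably—$\Delta_\g(\Delta_\g r)=0$ pointwise away from $\mathbf{0}$ (this is a short calculation using $\Delta_\g f=f''+2\coth r\,f'$ for radial $f$). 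The first term is therefore $\geq 0$ and the bi-Laplacian term contributes only a \emph{nonpositive} distributional piece at $\mathbf{0}$, which upon integration against $|u|^2$ yields a harmless sign. The decisive hyperbolic feature is that $\Delta_\g a=2\coth r\geq 2$ \emph{uniformly}, so the nonlinear term dominates $\tfrac{8}{3}\int |u|^6\,d\mu$ without any spatial weight.

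Integrating in $t$ from $t_1$ to $t_2$ and discarding the nonnegative Hessian and bi-Laplacian contributions then gives
\begin{equation*}
\frac{8}{3}\int_{t_1}^{t_2}\!\!\int_{\H^3}|u|^6\,d\mu\,dt\leq \bigl|M_a(t_2)-M_a(t_1)\bigr|\lesssim \sup_{t\in[t_1,t_2]}\|u(t)\|_{L^2}\|u(t)\|_{H^1},
\end{equation*}
which is exactly \eqref{mor}.

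The main obstacle is the justification of this formal computation at two levels: (i) $a=r$ is not $C^\infty$ at $\mathbf{0}$, and (ii) one only has $u\in C_tH^1_x$, which is not enough regularity to differentiate under the integral and integrate by parts at will. For (i), I would replace $a$ by a smooth radial approximation such as $a_\eps(x)=\widetilde a_\eps(\ch r)$ built exactly as in the proof of Lemma \ref{locsmo} (with the parameter tuned so that $\D^\alpha\D^\beta a_\eps$ remains $\geq 0$ and $\Delta_\g a_\eps\geq c>0$ uniformly, while the bi-Laplacian remains bounded, contributing a term controllable by mass conservation), and then send $\eps\to 0$. For (ii), I would first establish the identity for $H^3$ solutions (for which the computation is classical) and extend to $H^1$ solutions by a limiting argument, using the continuous dependence given by Proposition \ref{stability}; alternatively one can mollify $u$ in time, obtain the identity with a commutator error, and pass to the limit. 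With these standard reductions in hand, the computation above closes the estimate.
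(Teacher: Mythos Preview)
Your proposal is correct and follows the standard Morawetz multiplier strategy. Note, however, that the paper does \emph{not} actually prove this proposition in the text: it simply quotes the result from \cite[Proposition 4.1]{IoSt} and later remarks ``Proposition \ref{MoraIneq} was proved in \cite{IoSt}.'' The argument you outline---choosing $a=r$, using $\Delta_\g r=2\coth r\geq 2$ to get an unweighted $\int|u|^6$ term, the convexity $\D^\alpha\D^\beta r\geq 0$, and the remarkable identity $\Delta_\g(2\coth r)=0$ away from the origin (with the $-8\pi\delta_{\mathbf 0}$ distributional piece giving a favorable sign)---is precisely the computation carried out in that reference, so there is nothing to compare. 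Your handling of the two technical points (regularizing $a$ at the origin via the construction in Lemma \ref{locsmo}, and passing from smooth to $H^1$ solutions via Proposition \ref{stability}) is also the standard route; indeed the paper itself cites \cite[Proposition 4.1]{IoSt} in the proof of Lemma \ref{locsmo} for exactly this justification.
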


We turn now to the proof of the main theorem. Recall the conserved energy $E^1(u)$ defined in \eqref{conserve}. For any $E\in[0,\infty)$ let $S(E)$ be defined by
\begin{equation*}
S(E)=\sup\{ \Vert u\Vert_{Z(I)},E^1(u)\le E\},
\end{equation*}
where the supremum is taken over all solutions $u\in C(I:H^1(\H^3))$ defined on an interval $I$ and of energy less than $E$. We also define
$$E_{max}=\sup\{E,S(E)<\infty\}.$$
Using Proposition \ref{stability} with $\widetilde{u}\equiv 0,e\equiv 0$, $I=\R$, $M=1$, $\eps\ll 1$, one checks that $E_{\max}>0$. It follows from Proposition \ref{localwp} that if $u$ is a solution of \eqref{eq1} and $E(u)<E_{max}$, then $u$ can be extended to a globally defined solution which scatters.

If $E_{max}=+\infty$, then Theorem \ref{Main1} is proved, as a consequence of Propositions \ref{localwp} and \ref{stability}. If we assume that $E_{max}<+\infty$, then, there exists a sequence of solutions satisfying the hypothesis of the following key proposition.

\begin{proposition}\label{lem4}
Let $u_k\in C((-T_k,T^k):H^1(\H^3))$, $k=1,2,\ldots$, be a sequence of nonlinear solutions of the equation
\begin{equation*}
(i\partial_t+\Delta_\g)u=u|u|^4,
\end{equation*}
defined on open intervals $(-T_k,T^k)$ such that $E(u_k)\to E_{max}$. Let $t_k\in(-T_k,T^k)$ be a sequence of times with
\begin{equation}\label{CondForCompactnessBigSNorm}
\lim_{k\to\infty}\Vert u_k\Vert_{Z(-T_k,t_k)}=\lim_{k\to\infty}\Vert u_k\Vert_{Z(t_k,T^k)}=+\infty.
\end{equation}
Then there exists $w_0\in H^1(\H^3)$ and a sequence of isometries $h_k\in\mathbb{G}$ such that, up to passing to a subsequence, $u_k(t_k,h_k^{-1}\cdot x)\to w_0(x)\in H^1$ strongly.
\end{proposition}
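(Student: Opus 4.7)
The plan is to follow the Kenig--Merle concentration-compactness scheme in the variable coefficient setting. After time-translating so that $t_k=0$ (this preserves the hypothesis since the equation is time-invariant), consider the sequence $\phi_k:=u_k(0)\in H^1(\mathbb{H}^3)$, which is uniformly bounded by energy conservation and the embedding $\dot H^1\hookrightarrow L^2$. I would apply the profile decomposition of Section \ref{profile} to extract, up to a subsequence, a decomposition
\[
\phi_k=\sum_{\alpha=1}^{J}\pi_{h_k^\alpha}\bigl(e^{it_k^\alpha\Delta_\g}\widetilde{\psi}^\alpha\bigr)+r_k^J,
\]
where each profile $\widetilde{\psi}^\alpha$ is either a \emph{hyperbolic profile} concentrated at frequency $\sim 1$ (parameters $(h_k^\alpha,t_k^\alpha)$) or a \emph{Euclidean profile} obtained by a concentrating rescaling of the form \eqref{Alex29} (extra scale $N_k^\alpha\to\infty$). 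The parameters are pairwise orthogonal, so Pythagoras gives
\[
E^1(\phi_k)=\sum_{\alpha=1}^{J}E^1\!\left(\pi_{h_k^\alpha}e^{it_k^\alpha\Delta_\g}\widetilde{\psi}^\alpha\right)+E^1(r_k^J)+o_k(1),
\]
together with $\lim_{J\to\infty}\limsup_k\|e^{it\Delta_\g}r_k^J\|_{Z(\mathbb{R})}=0$.

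To each profile I attach a nonlinear profile $U_k^\alpha$: a hyperbolic profile with bounded $t_k^\alpha$ yields the nonlinear solution with data $\pi_{h_k^\alpha}e^{it_k^\alpha\Delta_\g}\widetilde{\psi}^\alpha$ via Proposition \ref{localwp}; a hyperbolic profile with $|t_k^\alpha|\to\infty$ is defined so that it is asymptotic to the corresponding linear flow as $k\to\infty$, so that $\|U_k^\alpha\|_{Z}$ is small on at least one half-line; a Euclidean profile yields a nonlinear solution globally well-approximated by the rescaled Euclidean NLS solution of Section \ref{Eucl}, whose scattering norm is bounded by \cite{CKSTTcrit}. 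The key quantitative point is that for any profile with energy strictly below $E_{max}$, $\|U_k^\alpha\|_{Z(\mathbb{R})}\le S(E_{max}-\delta)<\infty$.

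I then argue by contradiction, ruling out three scenarios: (i) $J\ge 2$ with two or more nontrivial profiles; (ii) $J=1$ with a Euclidean profile or with $|t_k^1|\to\infty$; (iii) the remainder $r_k^J$ does not tend to $0$ in $H^1$ along some subsequence. In case (i) the Pythagorean identity forces every profile to carry energy at most $E_{max}-\delta$ for some $\delta>0$, so each $U_k^\alpha$ is globally bounded in $Z$. Building the ansatz
\[
u_{app}:=\sum_{\alpha=1}^{J}U_k^\alpha+e^{it\Delta_\g}r_k^J,
\]
the orthogonality of the parameters makes all cross terms in $u_{app}|u_{app}|^4$ negligible, so $(i\partial_t+\Delta_\g)u_{app}-u_{app}|u_{app}|^4$ is small in $N^1(\mathbb{R})$; Proposition \ref{stability} then implies $\|u_k\|_{Z(\mathbb{R})}$ is bounded, contradicting \eqref{CondForCompactnessBigSNorm}. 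In case (ii) the same ansatz yields boundedness of $\|u_k\|_Z$ on at least one half-line $(0,T^k)$ or $(-T_k,0)$, again contradicting the hypothesis. In case (iii) the ansatz plus Proposition \ref{stability} again propagates the bound. The only surviving situation is thus $J=1$, the single profile is hyperbolic with $t_k^1\equiv 0$, and $r_k\to 0$ in $H^1$; setting $w_0:=\widetilde{\psi}^1$ and $h_k:=h_k^1$ yields $\phi_k(h_k^{-1}\cdot x)\to w_0(x)$ strongly in $H^1$, using $\dot H^1\hookrightarrow L^2$ to upgrade $E^1(r_k)\to 0$ into $\|r_k\|_{H^1}\to 0$.

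The main obstacle is the nonlinear orthogonality step, i.e. showing that $u_{app}$ is genuinely an approximate solution with $N^1$ error tending to zero. The Euclidean profiles, which concentrate at points of $\mathbb{H}^3$ through the local chart $\Psi_{h_k^\alpha}$, must be controlled via the scaling analysis of Section \ref{Eucl}, and the decorrelation between profiles (different isometries $h_k^\alpha$, time shifts $t_k^\alpha$, scales $N_k^\alpha$) must be leveraged to kill all mixed terms in the quintic nonlinearity; the local smoothing estimate of Lemma \ref{locsmo} is the natural tool to handle the cross interactions when one profile is frequency-localized at $1$ and another at a divergent scale. Once this orthogonality is established, the rest of the argument is a mechanical application of Proposition \ref{stability}.
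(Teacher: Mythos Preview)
Your proposal is correct and follows essentially the same route as the paper's proof in Section~\ref{proofnew}: profile decomposition of $u_k(0)$, attachment of nonlinear profiles (Euclidean ones controlled via Section~\ref{Eucl} and \cite{CKSTTcrit}, hyperbolic ones below $E_{max}$ by the induction hypothesis), construction of the approximate solution $U^{app}_k$, and Proposition~\ref{stability} to reach a contradiction with \eqref{CondForCompactnessBigSNorm} unless a single hyperbolic profile with $t_k\equiv 0$ carries all the energy. Your case split is organized a bit differently from the paper's (which uses: Case~I no profiles; Case~II one profile with full energy; Case~III some profile with energy strictly in $(0,E_{max})$), but the content is the same.

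One small correction worth noting: the local smoothing estimate of Lemma~\ref{locsmo} is not used for profile--profile interactions at different scales. Those cross terms are killed purely by the frame orthogonality (space/time/scale separation), as in Lemma~\ref{Assump}. Where Lemma~\ref{locsmo} actually enters is in controlling the interaction between a nonlinear profile and the \emph{remainder}, namely the term $U^\alpha_k\,\nabla_\g(e^{it\Delta_\g}r^J_k)$ in the error estimate (Lemma~\ref{ControlOfe}); the remainder is only small in the Besov-type quantity \eqref{rkJSmall}, not in $S^1$, so one needs the gain from local smoothing on the small set $\mathcal{S}^R_{N_k,t_k,h_k}$ where $U^\alpha_k$ concentrates.
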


Using these propositions we can now prove our main theorem.

\begin{proof}[Proof of Theorem \ref{Main1}] Assume for contradiction that $E_{max}<+\infty$. Then, we first claim that there exists a solution $u\in C((-T_\ast,T^\ast):H^1)$ of \eqref{eq1} such that 
\begin{equation}\label{uBlowsUp}
E(u)=E_{max}\text{ and }\Vert u\Vert_{Z(-T_\ast,0)}=\Vert u\Vert_{Z(0,T^\ast)}=+\infty.
\end{equation}
Indeed, by hypothesis, there exists a sequence of solutions $u_k$ defined on intervals $I_k=(-T_k,T^k)$ satisfying $E(u_k)\le E_{max}$ and
\begin{equation*}
\Vert u_k\Vert_{Z(I_k)}\to+\infty.
\end{equation*}
But this is exactly the hypothesis of Proposition \ref{lem4}, for suitable points $t_k\in(-T_k,T^k)$. Hence, up to a subsequence, we get that there exists a sequence of isometries $h_k\in\mathbb{G}$ such that $\pi_{h_k}(u_k(t_k))\to w_0$ strongly in $H^1$. Now, let $u\in C((-T_\ast,T^\ast):H^1(\H^3))$ be the maximal solution of \eqref{NLSivp} with initial data $w_0$, in the sense of Proposition \ref{localwp}. By the stability theory Proposition \ref{stability}, we have that, if $\Vert u\Vert_{Z(0,T^\ast)}<+\infty$, then $T^\ast=+\infty$ and $\Vert u_k\Vert_{Z(t_k,+\infty)}\le C(\Vert u\Vert_{Z(0,+\infty)})$ which is impossible. Similarly, we see that $\Vert u\Vert_{Z(-T_\ast,0)}=+\infty$, which completes the proof of \eqref{uBlowsUp}.

\medskip

We now claim that the solution $u$ obtained in the previous step can be extended to a global solution. Indeed, using Proposition \ref{localwp}, it suffices to see that there exists $\delta>0$ such that for all times $t\in(-T_\ast,T^\ast)$,
\begin{equation*}
\Vert u\Vert_{Z((t-\delta,t+\delta)\cap(-T_\ast,T^\ast))}\le 1.
\end{equation*}
If this were not true, there would exist a sequence $\delta_k \to 0 $ and a sequence of times $t_k\in(-T_\ast+\delta_k,T^\ast-\delta_k)$ such that
\begin{equation}\label{hypotu}
\Vert u\Vert_{Z(t_k-\delta_k,t_k+\delta_k)}\ge 1.
\end{equation}
Applying Proposition \ref{lem4} with $u_k=u$, we see that, up to a subsequence, $\pi_{h_k}(u_k(t_k))\to w$ strongly in $H^1$ for some translations $h_k\in\mathbb{G}$. We consider $z$ the maximal nonlinear solution with initial data $w$, then by the local theory Proposition \ref{localwp}, there exists $\delta>0$ such that
$$\Vert z\Vert_{Z(-\delta,\delta)}\leq 1/2.$$
Proposition \ref{stability} gives that $\Vert u\Vert_{Z(t_k-\delta,t_k+\delta)}\le 1/2+o_k(1)$, which again contradicts our hypothesis \eqref{hypotu}.
In other words, we proved that if $E_{max}<\infty$ then there is a global solution $u\in C(\mathbb{R}:H^1)$ of \eqref{eq1} such that
\begin{equation*}
E(u)=E_{max}\text{ and }\Vert u\Vert_{Z(-\infty,0)}=\Vert u\Vert_{Z(0,\infty)}=+\infty.
\end{equation*}

We claim now that there exists $\delta>0$ such that for all times,
\begin{equation}\label{CompactnessClaim}
\Vert u(t)\Vert_{L^6}\ge\delta.
\end{equation}
Indeed, otherwise, we can find a sequence of times $t_k\in(0,\infty)$ such that $u(t_k)\to 0$ in $L^6$. Applying again Proposition \ref{lem4} to this sequence, we see that, up to a subsequence, there exist $h_k\in\mathbb{G}$ such that $\pi_{h_k}(u(t_k))\to w$ in $H^1$ with $w=0$. But this contradicts conservation of energy.

But now we have a contradiction with the Morawetz estimate \eqref{mor}, which shows that $E_{max}=+\infty$ as desired.
\end{proof}

Propositions \ref{localwp} and \ref{stability} are standard consequences of the Strichartz estimates, while Proposition \ref{MoraIneq} was proved in \cite{IoSt}. Therefore it only remains to prove Proposition \ref{lem4}. We collect the main ingredients in the next two sections and complete the proof of Proposition \ref{lem4} in Section \ref{proofnew}.

\section{Euclidean approximations}\label{Eucl}

In this section we prove precise estimates showing how to compare Euclidean and hyperbolic solutions of both linear and nonlinear Schr\"{o}dinger equations. Since the global Euclidean geometry and the global hyperbolic geometry are quite different, such a comparison is meaningful only in the case of rescaled data that concentrate at a point.

We fix a spherically-symmetric function $\eta\in C^\infty_0(\mathbb{R}^3)$ supported in the ball of radius $2$ and equal to $1$ in the ball of radius $1$. Given $\phi\in \dot{H}^1(\mathbb{R}^3)$ and a real number $N\geq 1$ we define
\begin{equation}\label{rescaled}
\begin{split}
Q_N\phi\in C^\infty_0(\mathbb{R}^3),\qquad &(Q_N\phi)(x)=\eta(x/N^{1/2})\cdot(e^{\Delta/N}\phi)(x),\\
\phi_N\in C^\infty_0(\mathbb{R}^3),\qquad &\phi_N(x)=N^{1/2}(Q_N\phi)(Nx),\\
f_{N}\in C^\infty_0(\mathbb{H}^3),\qquad &f_{N}(y)=\phi_N(\Psi_I^{-1}(y)),
\end{split}
\end{equation}
where $\Psi_I$ is defined in \eqref{PsiCoord}. Thus $Q_N\phi$ is a regularized, compactly supported\footnote{This modification is useful to avoid the contribution of $\phi$ coming from the Euclidean infinity, in a uniform way depending on the scale $N$.} modification of the profile $\phi$, $\phi_N$ is an $\dot{H}^1$-invariant rescaling of $Q_N\phi$, and $f_{N}$ is the function obtained by transferring $\phi_N$ to a neighborhood of $\bf{0}$ in $\mathbb{H}^3$. We define also
\begin{equation*}
E^1_{\mathbb{R}^3}(\phi)=\frac{1}{2}\int_{\mathbb{R}^3}|\nabla\phi|^2\,dx+\frac{1}{6}\int_{\mathbb{R}^3}|\phi|^{6}\,dx.
\end{equation*}

We will use the main theorem of \cite{CKSTTcrit}, in the following form.

\begin{theorem}\label{MainThmEucl}
Assume $\psi\in\dot{H}^1(\mathbb{R}^3)$. Then there is a unique global solution $v\in C(\mathbb{R}:\dot{H}^1(\mathbb{R}^3))$ of the initial-value problem
\begin{equation}\label{clo3}
(i\partial_t+\Delta)v=v|v|^4,\qquad v(0)=\psi,
\end{equation}
and
\begin{equation}\label{clo4}
\|\,|\nabla v|\,\|_{L^\infty_tL^2_x\cap L^2_tL^6_x(\mathbb{R}^3\times\mathbb{R})}\leq \widetilde{C}(E^1_{\mathbb{R}^3}(\psi)).
\end{equation}
Moreover this solution scatters in the sense that there exists $\psi^{\pm\infty}\in\dot{H}^1(\mathbb{R}^3)$ such that
\begin{equation}\label{EScat}
\Vert v(t)-e^{it\Delta}\psi^{\pm\infty}\Vert_{\dot{H}^1(\mathbb{R}^3)}\to 0
\end{equation}
as $t\to\pm\infty$.
Besides, if $\psi\in H^5(\mathbb{R}^3)$ then $v\in C(\mathbb{R}:H^5(\mathbb{R}^3))$ and 
\begin{equation*}
\sup_{t\in\mathbb{R}}\|v(t)\|_{H^5(\mathbb{R}^3)}\lesssim_{\|\psi\|_{H^5(\mathbb{R}^3)}}1.
\end{equation*} 
\end{theorem}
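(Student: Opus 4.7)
The global well-posedness and scattering statements \eqref{clo3}--\eqref{EScat} were established by Colliander-Keel-Staffilani-Takaoka-Tao, and a full proof is a long paper in its own right; I will only sketch the strategy. First I would set up the local Cauchy theory in $\dot{H}^1(\mathbb{R}^3)$: Strichartz estimates for $i\partial_t+\Delta$, the admissible pair $(L^2_t,L^6_x)$, and the Sobolev embedding $\dot{H}^1\hookrightarrow L^6$, together with a contraction argument carrying one derivative in a Strichartz space, give local existence, uniqueness, small-data global existence and scattering, persistence of higher Sobolev regularity, and the key scattering criterion: a solution extends globally and scatters if and only if the dimensionless norm $\|v\|_{L^{10}_{t,x}}$ is finite. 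They also yield a Euclidean stability lemma entirely analogous to Proposition \ref{stability}.

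The global a priori bound \eqref{clo4} is the hard part; I would argue by contradiction and introduce a critical energy $E_c$ above which the $L^{10}_{t,x}$ norm fails to be a function of the $\dot H^1$ energy alone. Using Keraani's linear profile decomposition for $\dot H^1$-bounded sequences of free Schr\"odinger solutions in $\mathbb{R}^3$ together with the stability lemma, I would extract from any near-critical blow-up sequence a minimal critical element $u_c\in C(\mathbb{R}:\dot{H}^1(\mathbb{R}^3))$ of energy exactly $E_c$ whose trajectory $\{u_c(t):t\in\mathbb{R}\}$ is precompact in $\dot{H}^1(\mathbb{R}^3)$ modulo the non-compact symmetry group of space translations and $\dot{H}^1$-invariant scalings. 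A standard reduction then produces two remaining enemies: a soliton-like solution in which the frequency scale $N(t)$ stays bounded from above and below, and a self-similar type solution.

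The rigidity step is the step I expect to be the main obstacle. Because the problem is $\dot H^1$-critical and provides no a priori $L^2$ control of $u_c$, the classical interaction Morawetz inequality $\|v\|_{L^4_{t,x}}^4\lesssim\|v\|_{L^\infty_tL^2_x}^2\|v\|_{L^\infty_t\dot{H}^{1/2}_x}^2$ cannot be applied directly. The technical heart of CKSTT is the frequency-localized interaction Morawetz inequality, in which the Morawetz identity is applied to the high-frequency truncation $P_{\geq N}u_c$ and the resulting commutator and nonlinear errors are absorbed using the compactness of $u_c$ modulo symmetries, combined with Bourgain's induction-on-energy to keep the constants uniform. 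Carrying this out carefully forces the high-frequency tail of $u_c$ to vanish on long time intervals, contradicting $E^1_{\mathbb{R}^3}(u_c)=E_c>0$. Once \eqref{clo4} is established, the scattering statement \eqref{EScat} follows from the Duhamel formula together with \eqref{clo4}, and the persistence of $H^5$ regularity follows by differentiating the equation and rerunning the Strichartz contraction at the $H^5$ level, with the $L^{10}_{t,x}$ bound from \eqref{clo4} controlling the Strichartz norm of the nonlinearity.
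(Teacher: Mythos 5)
The paper does not actually prove this theorem; it imports it verbatim from Colliander--Keel--Staffilani--Takaoka--Tao \cite{CKSTTcrit} and uses it as a black box (the sentence immediately preceding the statement says precisely this). Your sketch is a recognizable high-level account of how one proves it, but it conflates two distinct proof strategies. The original CKSTT argument does \emph{not} extract a minimal critical element from a blow-up sequence via Keraani's profile decomposition and then rule out precompact-modulo-symmetry enemies; that is the later Kenig--Merle concentration-compactness-plus-rigidity framework, which \cite{CKSTTcrit} predates. CKSTT instead run an explicit Bourgain-style induction on energy directly on the $L^{10}_{t,x}$ bound: assuming the bound at strictly smaller energy levels, they take a hypothetical solution near the critical energy with large scattering norm, localize it in frequency and in space--time to a single bubble of concentration, and close the argument with the frequency-localized interaction Morawetz inequality together with almost-conservation and mass/energy localization estimates; no precompactness of the orbit modulo symmetries is ever asserted. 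The concentration-compactness reformulation you outline (minimal critical element with precompact orbit, soliton versus self-similar enemies, rigidity) is a legitimate alternative route to the same theorem and was later carried out by Killip and Visan; both routes share the frequency-localized interaction Morawetz estimate as their analytic heart. So your proposal is correct in substance, just mislabeled as ``the CKSTT proof.'' Your last two steps --- scattering from \eqref{clo4} via Duhamel, and persistence of $H^5$ regularity by rerunning the Strichartz contraction against the now-finite $L^{10}_{t,x}$ norm --- are the standard arguments and are fine as stated.
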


The main result in this section is the following lemma:

\begin{lemma}\label{step1}
Assume $\phi\in\dot{H}^1(\mathbb{R}^3)$, $T_0\in(0,\infty)$, and $\rho\in\{0,1\}$ are given, and define $f_{N}$ as in \eqref{rescaled}. Then the following conclusions hold:

(i) There is $N_0=N_0(\phi,T_0)$ sufficiently large such that for any $N\geq N_0$ there is a unique solution $U_{N}\in C((-T_0N^{-2},T_0N^{-2}):H^1(\mathbb{H}^3))$ of the initial-value problem
\begin{equation}\label{clo5}
(i\partial_t+\Delta_\g)U_N=\rho U_N|U_N|^4,\qquad U_N(0)=f_N.
\end{equation}
Moreover, for any $N\geq N_0$,
\begin{equation}\label{clo6}
\|U_N\|_{S^1(-T_0N^{-2},T_0N^{-2})}\lesssim_{E^1_{\mathbb{R}^3}(\phi)}1.
\end{equation}

(ii) Assume $\varepsilon_1\in(0,1]$ is sufficiently small (depending only on $E^1_{\mathbb{R}^3}(\phi)$), $\phi'\in H^5(\mathbb{R}^3)$, and $\|\phi-\phi'\|_{\dot{H}^1(\mathbb{R}^3)}\leq\varepsilon_1$. Let $v'\in C(\mathbb{R}:H^5)$ denote the solution of the initial-value problem
\begin{equation*}
(i\partial_t+\Delta)v'=\rho v'|v'|^4,\qquad v'(0)=\phi'.
\end{equation*}
For $R,N\geq 1$ we define
\begin{equation}\label{clo9}
\begin{split}
&v'_R(x,t)=\eta(x/R)v'(x,t),\qquad\,\,\qquad (x,t)\in\mathbb{R}^3\times(-T_0,T_0),\\
&v'_{R,N}(x,t)=N^{1/2}v'_R(Nx,N^2t),\qquad (x,t)\in\mathbb{R}^3\times(-T_0N^{-2},T_0N^{-2}),\\
&V_{R,N}(y,t)=v'_{R,N}(\Psi_I^{-1}(y),t)\qquad\quad\,\, (y,t)\in\mathbb{H}^3\times(-T_0N^{-2},T_0N^{-2}).
\end{split}
\end{equation}
Then there is $R_0\geq 1$ (depending on $T_0$ and $\phi'$ and $\varepsilon_1$) such that, for any $R\geq R_0$,
\begin{equation}\label{clo18}
\limsup_{N\to\infty}\|U_N-V_{R,N}\|_{S^1(-T_0N^{-2},T_0N^{-2})}\lesssim_{E^1_{\mathbb{R}^3}(\phi)}\varepsilon_1.
\end{equation}
\end{lemma}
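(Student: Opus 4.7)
The plan is to apply the stability lemma (Proposition~\ref{stability}) with $V_{R,N}$ playing the role of the approximate solution and the true hyperbolic evolution $U_N$ playing the role of the exact one. I treat (ii) first; (i) then follows from (ii) by choosing any $\phi'\in H^5(\R^3)$ with $\|\phi-\phi'\|_{\dot H^1(\R^3)}\le\varepsilon_1$ and using the resulting $S^1$-bound on $V_{R,N}$ together with Proposition~\ref{stability}. Throughout I work in the chart $\Psi_I$: the induced metric reads $g_{ij}(v)=\delta_{ij}-v^iv^j/(1+|v|^2)$ with $\sqrt{\det g}=(1+|v|^2)^{-1/2}$, and on the ball $\{|v|\le 2R/N\}$ that contains the support of $V_{R,N}(\cdot,t)$, one has $g^{ij}-\delta^{ij}=O(|v|^2)$ and $\sqrt{\det g}-1=O(|v|^2)$, so that $\Delta_\g-\Delta$ is a second-order perturbation whose coefficients are $O(|v|^2)$ on $\partial^2$ and $O(|v|)$ on $\partial$.

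The core task is to control the defect $e_N:=(i\partial_t+\Delta_\g)V_{R,N}-\rho V_{R,N}|V_{R,N}|^4$ in the $N^1((-T_0N^{-2},T_0N^{-2}))$-norm. Split $e_N=e_N^{(1)}+e_N^{(2)}$, where $e_N^{(1)}$ is the pull-back via $\Psi_I$ of the Euclidean defect
\begin{equation*}
(i\partial_t+\Delta)v'_{R,N}-\rho v'_{R,N}|v'_{R,N}|^4,
\end{equation*}
and $e_N^{(2)}$ collects the contributions of $\Delta_\g-\Delta$ acting on $V_{R,N}$ in $\Psi_I$-coordinates. Since $v'$ solves \eqref{clo3}, the cutoff $\eta(\cdot/R)$ generates $e_N^{(1)}$ only through terms supported in the annulus $\{R\le|Nv|\le 2R\}$, involving $v'$ and $\nabla v'$ weighted by $R^{-1}$ and $R^{-2}$. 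Since both $S^1$ and $N^1$ are invariant under the energy-critical scaling $f(x,t)\mapsto N^{1/2}f(Nx,N^2t)$ (after accounting for the compensating $N^2$ that appears on the error), bounding $e_N^{(1)}$ in $N^1((-T_0N^{-2},T_0N^{-2}))$ is equivalent to bounding the unrescaled Euclidean defect in $N^1((-T_0,T_0))$; for this, Theorem~\ref{MainThmEucl} applied to $\phi'\in H^5$ gives $v'\in C([-T_0,T_0];H^5)$, and continuity in time forces $\{v'(t):t\in[-T_0,T_0]\}$ to be compact, hence tight, in $W^{1,p}$ for $p\in[2,\infty)$, driving the annulus contribution to $0$ as $R\to\infty$. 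For $e_N^{(2)}$, the pointwise estimates $|\partial_v^k V_{R,N}|\lesssim N^{k+1/2}\|v'\|_{W^{k,\infty}}$ combined with the coefficient sizes of $\Delta_\g-\Delta$ and the $O(R^3N^{-3})$ spatial support yield
\begin{equation*}
\|\nabla e_N^{(2)}\|_{L^1_tL^2_x}\lesssim T_0\,R^{7/2}N^{-2}\|v'\|_{W^{3,\infty}},
\end{equation*}
which tends to $0$ as $N\to\infty$ for each fixed $R$, with an analogous computation for the $L^2_tL^{6/5}_x$ component.

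For the remaining hypotheses of Proposition~\ref{stability}, the integration formula \eqref{changevar} together with the three-dimensional $\dot H^1$-scale-invariance yields
\begin{equation*}
\|f_N-V_{R,N}(\cdot,0)\|_{H^1(\H^3)}\approx\|Q_N\phi-\eta(\cdot/R)\phi'\|_{\dot H^1(\R^3)}\le\|Q_N\phi-\phi\|_{\dot H^1}+\varepsilon_1+o_R(1),
\end{equation*}
which is $\le 2\varepsilon_1$ for $R$ and $N$ sufficiently large. Likewise, the scale invariance of $L^{10}_{t,x}$ and $S^1$ in dimension three reduces $\|V_{R,N}\|_{L^{10}_{t,x}(\H^3\times I)}+\sup_t\|V_{R,N}(t)\|_{H^1(\H^3)}$ to the corresponding Euclidean quantities for $v'_R$, which Theorem~\ref{MainThmEucl} controls by $\widetilde C(E^1_{\R^3}(\phi))$. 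Choosing $R$ first (large enough to absorb the cutoff errors) and then $N$ large enough to make the geometric error and the initial-data error fall below $\varepsilon_1(M)/C$, Proposition~\ref{stability} yields a solution $U_N$ on $(-T_0N^{-2},T_0N^{-2})$ together with $\|U_N-V_{R,N}\|_{S^1}\lesssim\varepsilon_1$, which is \eqref{clo18}. Part (i) follows by picking one such $\phi'$ and writing $\|U_N\|_{S^1}\le\|V_{R,N}\|_{S^1}+C(M)\varepsilon_1\lesssim_{E^1_{\R^3}(\phi)}1$.

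The main obstacle is the two-parameter nature of the defect estimate: $e_N^{(1)}$ becomes small only when $R$ is large (to exploit tightness of $v'$), while $e_N^{(2)}$ becomes small only when $R/N$ is small (to exploit the near-flatness of $\Psi_I$ at the origin). The order of quantifiers is therefore forced, with $R=R(\phi',T_0,\varepsilon_1)$ fixed first and $N$ chosen large depending on $R$. Once this defect control is established, every other step--the initial-data convergence, the scale invariance of the critical norms $L^{10}_{t,x}$ and $S^1$, and the final appeal to Proposition~\ref{stability}--is essentially routine.
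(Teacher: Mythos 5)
Your proposal is correct and follows essentially the same route as the paper: both treat $V_{R,N}$ as an approximate solution, split the defect into a cutoff error (handled by taking $R$ large, using the $H^5$ regularity of $v'$ from Theorem~\ref{MainThmEucl}) and a geometric error from $\Delta_\g-\Delta$ (handled by taking $N$ large after fixing $R$, exploiting the near-flatness of the chart $\Psi_I$ on balls of radius $O(R/N)$), then verify the remaining hypotheses of Proposition~\ref{stability} and invoke it to produce $U_N$ with the claimed $S^1$-bounds. Your use of tightness/compactness of $\{v'(t)\}$ in place of the paper's explicit pointwise estimates, and your Taylor expansion of $g^{ij}$ in place of the paper's $|\Psi_I^{-1}(y)|^{k-1}$ bookkeeping, are cosmetic variants of the same computation, and you correctly identify the forced order of quantifiers ($R$ then $N$).
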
   

\begin{proof}[Proof of Lemma \ref{step1}] All of the constants in this proof are allowed to depend on $E^1_{\mathbb{R}^3}(\phi)$; for simplicity of notation we will not track this dependence explicitly. Using Theorem \ref{MainThmEucl}
\begin{equation}\label{clo7}
\begin{split}
&\|\nabla v'\|_{(L^\infty_tL^2_x\cap L^2_tL^6_x)(\mathbb{R}^3\times\mathbb{R})}\lesssim 1,\\
&\sup_{t\in\mathbb{R}}\|v'(t)\|_{H^5(\mathbb{R}^3)}\lesssim_{\|\phi'\|_{H^5(\mathbb{R}^3)}}1.
\end{split}
\end{equation}
We will prove that for any $R_0$ sufficiently large there is $N_0$ such that $V_{R_0,N}$ is an almost-solution of the equation \eqref{clo5}, for any $N\geq N_0$. We will then apply Proposition \ref{stability} to upgrade this to an exact solution of the initial-value problem \eqref{clo5} and prove the lemma. 

Let
\begin{equation*}
\begin{split}
e_R(x,t):&=[(i\partial_t+\Delta)v'_R-\rho v'_R|v'_R|^4](x,t)=\rho(\eta(x/R)-\eta(x/R)^5)v'(x,t)|v'(x,t)|^4\\
&+R^{-2}v'(x,t)(\Delta\eta)(x/R)+2R^{-1}\sum_{j=1}^3\partial_jv'(x,t)\partial_j\eta(x/R).
\end{split}
\end{equation*}
Since $|v'(x,t)|\lesssim_{\|\phi'\|_{H^5(\mathbb{R}^3)}}1$, see \eqref{clo7}, it follows that
\begin{equation*}
\sum_{k=1}^3|\partial_ke_R(x,t)|\lesssim_{\|\phi'\|_{H^3(\mathbb{R}^3)}}\mathbf{1}_{[R,2R]}(|x|)\cdot\big[|v'(x,t)|+\sum_{k=1}^3|\partial_kv'(x,t)|+\sum_{k,j=1}^3|\partial_k\partial_jv'(x,t)|\big].
\end{equation*}
Therefore
\begin{equation}\label{clo10}
\lim_{R\to\infty}\|\,|\nabla e_R|\,\|_{L^2_tL^2_x(\mathbb{R}^3\times(-T_0,T_0))}=0.
\end{equation}
Letting
\begin{equation*}
e_{R,N}(x,t):=[(i\partial_t+\Delta)v'_{R,N}-\rho v'_{R,N}|v'_{R,N}|^4](x,t)=N^{5/2}e_R(Nx,N^2t),
\end{equation*}
it follows from \eqref{clo10} that there is $R_0\geq 1$ such that, for any $R\geq R_0$ and $N\geq 1$,
\begin{equation}\label{clo11}
\|\,|\nabla e_{R,N}|\,\|_{L^1_tL^2_x(\mathbb{R}^3\times(-T_0N^{-2},T_0N^{-2}))}\leq\varepsilon_1.
\end{equation}

With $V_{R,N}(y,t)=v'_{R,N}(\Psi_I^{-1}(y),t)$ as in \eqref{clo9}, let
\begin{equation}\label{clo13}
\begin{split}
E_{R,N}(y,t):&=[(i\partial_t+\Delta_\g)V_{R,N}-\rho V_{R,N}|V_{R,N}|^4](y,t)\\
&=e_{R,N}(\Psi_I^{-1}(y),t)+\Delta_gV_{R,N}(y,t)-(\Delta v'_{R,N})(\Psi_I^{-1}(y),t).
\end{split}
\end{equation}
To estimate the difference in the formula above, let $\partial_j$, $j=1,2,3$, denote the standard vector-fields on $\mathbb{R}^3$ and $\widetilde{\partial_j}:=(\Psi_I)_\ast(\partial_j)$ and induced vector-fields on $\mathbb{H}^3$. Using the definition \eqref{PsiCoord} we compute
\begin{equation*}
\g_{ij}(y):=\g_y(\widetilde{\partial_i},\widetilde{\partial_j})=\delta_{ij}-\frac{v_iv_j}{1+|v|^2},\qquad y=\Psi_I(v).
\end{equation*}
Using the standard formula for the Laplace-Beltrami operator in local coordinates 
\begin{equation*}
\Delta_\g f=|\g|^{-1/2}\widetilde{\partial_i}(|\g|^{1/2}\g^{ij}\widetilde{\partial_j}f)
\end{equation*}
we derive the pointwise bound
\begin{equation*}
|\widetilde{\nabla}^1[\Delta_\g f(y)-\Delta(f\circ\Psi_I)(\Psi_I^{-1}(y))]|\lesssim\sum_{k=1}^3|\Psi_I^{-1}(y)|^{k-1}|\widetilde{\nabla}^kf(y)|,
\end{equation*}
for any $C^3$ function $f:\mathbb{H}^3\to\mathbb{C}$ supported in the ball of radius $1$ around $\bf{0}$, where, by definition, for $k=1,2,3$
\begin{equation*}
|\widetilde{\nabla}^k h(y)|:=\sum_{k_1+k_2+k_3=k}|\widetilde{\partial_1}^{k_1}\widetilde{\partial_2}^{k_2}\widetilde{\partial_3}^{k_3}h(y)|.
\end{equation*}
Therefore the identity \eqref{clo13} gives the pointwise bound
\begin{equation*}
\begin{split}
&|\widetilde{\nabla}^1E_{R,N}(y,t)|\\
&\lesssim |\nabla e_{R,N}|(\Psi_I^{-1}(y),t)+\sum_{k=1}^3\sum_{k_1+k_2+k_3=k}|\Psi_I^{-1}(y)|^{k-1}|\partial_1^{k_1}\partial_2^{k_2}\partial_3^{k_3}v'_{R,N}(\Psi_I^{-1}(y),t)|\\
&\lesssim |\nabla e_{R,N}|(\Psi_I^{-1}(y),t)+R^3N^{3/2}\sum_{k_1+k_2+k_3\in\{1,2,3\}}|\partial_1^{k_1}\partial_2^{k_2}\partial_3^{k_3}v'_R(N(\Psi_I^{-1}(y),t)|.
\end{split}
\end{equation*}
Using also \eqref{clo11}, it follows that for any $R_0$ sufficiently large there is $N_0$ such that for any $N\geq N_0$
\begin{equation}\label{clo15}
\|\,|\nabla_\g E_{R_0,N}|\,\|_{L^1_tL^2_x(\mathbb{H}^3\times(-T_0N^{-2},T_0N^{-2}))}\leq 2\varepsilon_1.
\end{equation}

To verify the hypothesis \eqref{ume} of Proposition \ref{stability}, we use \eqref{clo7} and the integral formula \eqref{changevar} to estimate, for $N$ large enough,
\begin{equation}\label{clo16}
\begin{split}
\|V_{R_0,N}&\|_{L^{10}_{x,t}(\mathbb{H}^3\times(-T_0N^{-2},T_0N^{-2}))}+\sup_{t\in(-T_0N^{-2},T_0N^{-2})}\|V_{R_0,N}(t)\|_{H^1(\mathbb{H}^3)}\\
&\lesssim \|v'_{R_0,N}\|_{L^{10}_{x,t}(\mathbb{R}^3\times(-T_0N^{-2},T_0N^{-2}))}+\sup_{t\in(-T_0N^{-2},T_0N^{-2})}\|\nabla v'_{R_0,N}(t)\|_{L^2(\mathbb{R}^3)}\\
&=\|v'_{R_0}\|_{L^{10}_{x,t}(\mathbb{R}^3\times(-T_0,T_0))}+\sup_{t\in(-T_0,T_0)}\|\nabla v'_{R_0}(t)\|_{L^2(\mathbb{R}^3)}\\
&\lesssim 1.
\end{split}
\end{equation}
Finally, to verify the inequality on the first term in \eqref{safetycheck} we estimate, for $R_0,N$ large enough,
\begin{equation}\label{clo17}
\begin{split}
\|f_N-V_{R_0,N}(0)&\|_{H^1(\mathbb{H}^3)}\lesssim \|\phi_N-v'_{R_0,N}(0)\|_{\dot{H}^1(\mathbb{R}^3)}=\|Q_N\phi-v'_{R_0}(0)\|_{\dot{H}^1(\mathbb{R}^3)}\\
&\leq \|Q_N\phi-\phi\|_{\dot{H}^1(\mathbb{R}^3)}+\|\phi-\phi'\|_{\dot{H}^1(\mathbb{R}^3)}+\|\phi'-v'_{R_0}(0)\|_{\dot{H}^1}\leq 3\varepsilon_1.
\end{split}
\end{equation}
The conclusion of the lemma follows from Proposition \ref{stability}, provided that $\varepsilon_1$ is fixed sufficiently small depending on $E^1_{\mathbb{R}^3}(\phi)$. 
\end{proof}

As a consequence, we have the following:

\begin{corollary}\label{step2}
Assume $\psi\in \dot{H}^1(\mathbb{R}^3)$, $\varepsilon>0$, $I\subseteq \mathbb{R}$ is an interval, and
\begin{equation}\label{clo20}
\|\,|\nabla(e^{it\Delta}\psi)|\,\|_{L^{p}_tL^{q}_x(\mathbb{R}^3\times I)}\leq\varepsilon,
\end{equation}
where $2/p+3/q=3/2$, $q\in (2,6]$. For $N\geq 1$ we define, as before,
\begin{equation*}
(Q_N\psi)(x)=\eta(x/N^{1/2})\cdot(e^{\Delta/N}\psi)(x),\,\,\psi_N(x)=N^{1/2}(Q_N\psi)(Nx),\,\,\widetilde{\psi}_N(y)=\psi_N(\Psi_I^{-1}(y)).
\end{equation*}
Then there is $N_1=N_1(\psi,\varepsilon)$ such that, for any $N\geq N_1$,
\begin{equation}\label{clo21}
\|\,|\nabla_\g(e^{it\Delta_\g}\widetilde{\psi}_N)|\,\|_{L^p_tL^q_x(\mathbb{H}^3\times N^{-2}I)}\lesssim_q\varepsilon.
\end{equation}
\end{corollary}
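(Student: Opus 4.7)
I would deduce \eqref{clo21} from Lemma \ref{step1}(ii) applied in the linear case $\rho=0$ (so that the solution $U_N$ of \eqref{clo5} coincides with $e^{it\Delta_\g}\widetilde{\psi}_N$), combined with the scale invariance of the Euclidean gradient Strichartz norm under $(x,t)\mapsto (Nx,N^2t)$. This scale invariance — which holds exactly because $2/p+3/q=3/2$ — is the mechanism that converts the Euclidean hypothesis on $\mathbb{R}^3\times I$ into the desired estimate on the rescaled interval $\mathbb{H}^3\times N^{-2}I$. Fix a small $\delta=\delta(\varepsilon)$ and pick $\phi'\in H^5(\mathbb{R}^3)\cap\dot H^1(\mathbb{R}^3)$ with $\|\psi-\phi'\|_{\dot H^1}\leq\delta$. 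The Euclidean Strichartz estimate applied to $\psi-\phi'$ shows that the hypothesis continues to hold with $\phi'$ in place of $\psi$ up to $O(\delta)$. Since the free solution $v'=e^{it\Delta}\phi'$ has finite global gradient Strichartz norm, choose $T_0=T_0(\phi',\delta)$ so that $\||\nabla v'|\|_{L^p_tL^q_x(\mathbb{R}^3\times[-T_0,T_0]^c)}\leq\delta$, and write $I=I'\sqcup I''$ with $I'=I\cap[-T_0,T_0]$.

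On $N^{-2}I'$, Lemma \ref{step1}(ii) yields $R_0\geq 1$ and $N_0$ such that, for $N\geq N_0$, $\|e^{it\Delta_\g}\widetilde\psi_N-V_{R_0,N}\|_{S^1(-T_0N^{-2},T_0N^{-2})}\lesssim\delta$. Interpolating between the two endpoints $L^\infty_t\dot H^1$ and $L^2_t\dot W^{1,6}$ that build up $S^1$ (and using Sobolev embedding in $x$) controls every admissible gradient Strichartz norm by the $S^1$ norm, so
\begin{equation*}
\bigl\||\nabla_\g(e^{it\Delta_\g}\widetilde\psi_N-V_{R_0,N})|\bigr\|_{L^p_tL^q_x(\mathbb{H}^3\times N^{-2}I')}\lesssim_q \delta.
\end{equation*}
For $V_{R_0,N}$ itself, note that it is supported in the shrinking set $\{|\Psi_I^{-1}(y)|\leq 2R_0/N\}$ on which the metric $\g$ converges to the Euclidean metric. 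The successive changes of variable $v=\Psi_I^{-1}(y)$, $u=Nv$, $s=N^2t$, together with the integration formula \eqref{changevar}, give
\begin{equation*}
\bigl\||\nabla_\g V_{R_0,N}|\bigr\|_{L^p_tL^q_x(\mathbb{H}^3\times N^{-2}I')}=(1+o_N(1))\,\bigl\||\nabla v'_{R_0}|\bigr\|_{L^p_tL^q_x(\mathbb{R}^3\times I')},
\end{equation*}
where all $N$-factors cancel precisely because $2/p+3/q=3/2$. The right-hand side differs from $\||\nabla v'|\|_{L^p_tL^q_x(\mathbb{R}^3\times I')}$ by $O(\delta)$ (the same truncation bound used in the proof of Lemma \ref{step1}, valid once $R_0$ is large in terms of $T_0,\phi',\delta$), hence is at most $\varepsilon+O(\delta)$.

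The contribution from the rescaled tail $N^{-2}I''$, which appears only when $I$ is unbounded, is the main obstacle: the naive bound by Strichartz starting at time $\pm T_0 N^{-2}$ costs $\|\widetilde\psi_N\|_{\dot H^1(\mathbb{H}^3)}\sim\|\psi\|_{\dot H^1(\mathbb{R}^3)}$, which is not small. My plan is to iterate the comparison of Lemma \ref{step1} on an enlarged window $(-T_1 N^{-2},T_1N^{-2})$ with $T_1\gg T_0$ (increasing $R_0$ and $N_0$ accordingly), invoking the scale-invariant identity to see that the Euclidean gradient Strichartz of $v'_{R_0}$ on $I''\cap[-T_1,T_1]$ is still $O(\delta)$, and then treat the outermost rescaled segment $|t|>T_1N^{-2}$ by exploiting that at time $\pm T_1N^{-2}$ the hyperbolic solution is close (in $\dot H^1$) to the transferred state of the dispersed Euclidean $v'(\pm T_1)$, whose subsequent linear evolution has small $L^p_tL^q_x$ Strichartz tail by the same rescaled-Euclidean comparison. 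Choosing $\delta$ sufficiently small relative to $\varepsilon$, and then $N_1$ large, yields the bound $\lesssim_q\varepsilon$ claimed in \eqref{clo21}.
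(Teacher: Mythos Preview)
Your treatment of the bounded window $N^{-2}I'$ via Lemma \ref{step1}(ii) is fine and matches the paper's approach on that part. The gap is in your handling of the tail $N^{-2}I''$. Your proposed iteration does not terminate: each application of Lemma \ref{step1}(ii) only compares the hyperbolic and Euclidean flows on a \emph{bounded} rescaled window $(-T_1N^{-2},T_1N^{-2})$, with the threshold $N_0$ depending on $T_1$. Restarting at time $\pm T_1N^{-2}$ with the transferred state of $v'(\pm T_1)$ just reproduces the same situation---you again control only a further bounded window, and the errors (each of size $O(\delta)$) accumulate without any mechanism forcing convergence. The phrase ``by the same rescaled-Euclidean comparison'' hides the fact that no single comparison, and no finite chain of them, ever reaches $|t|=\infty$ for a fixed $N$.

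The paper bypasses this entirely. After reducing to $\psi\in C^\infty_0(\mathbb{R}^3)$ it uses the hyperbolic dispersive estimate \eqref{dispersive} directly:
\[
\|(-\Delta_\g)^{1/2}e^{it\Delta_\g}\widetilde\psi_N\|_{L^q_x(\mathbb{H}^3)}\lesssim |t|^{3/q-3/2}\|(-\Delta_\g)^{1/2}\widetilde\psi_N\|_{L^{q'}_x(\mathbb{H}^3)}\lesssim_\psi |t|^{3/q-3/2}N^{3/q-3/2},
\]
so that $\||\nabla_\g e^{it\Delta_\g}\widetilde\psi_N|\|_{L^p_tL^q_x(|t|\geq T_1N^{-2})}\lesssim_\psi T_1^{-1/p}$. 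One then fixes $T_1=T_1(\psi,\varepsilon)$ to make this $\lesssim\varepsilon$, and only afterwards invokes Lemma \ref{step1}(ii) on the remaining bounded window. This dispersive input is the missing idea in your argument; it is what allows a single application of the Euclidean comparison to suffice.
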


\begin{proof}[Proof of Lemma \ref{step2}] As before, the implicit constants may depend on $E^1_{\mathbb{R}^3}(\psi)$. We may assume that $\psi\in C^\infty_0(\mathbb{R}^3)$. Using the dispersive estimate \eqref{dispersive}, for any $t\neq 0$,
\begin{equation*}
\begin{split}
\|(-\Delta_\g)^{1/2}(e^{it\Delta_\g}\widetilde{\psi}_N)\|_{L^{q}_x(\mathbb{H}^3)}&\lesssim |t|^{3/q-3/2}\|(-\Delta_\g)^{1/2}\widetilde{\psi}_N\|_{L^{q'}_x(\mathbb{H}^3)}\lesssim |t|^{3/q-3/2}\|\,|\nabla\psi_N|\,\|_{L^{q'}_x(\mathbb{R}^3)}\\&\lesssim_\psi|t|^{3/q-3/2}N^{3/q-3/2}.
\end{split}
\end{equation*}
Thus, for $T_1>0$,
\begin{equation*}
\|\,|\nabla_\g(e^{it\Delta_\g}\widetilde{\psi}_N)|\,\|_{L^{p}_tL^{q}_x(\mathbb{H}^3\times[\mathbb{R}\setminus(-T_1N^{-2},T_1N^{-2})])} \lesssim_\psi T_1^{-1/p}.
\end{equation*}
Therefore we can fix $T_1=T_1(\psi,\varepsilon)$ such that, for any $N\geq 1$,
\begin{equation*}
\|\,|\nabla_\g(e^{it\Delta_\g}\widetilde{\psi}_N)|\,\|_{L^{p}_tL^{q}_x(\mathbb{H}^3\times[\mathbb{R}\setminus(-T_1N^{-2},T_1N^{-2})])}\lesssim_q\varepsilon.
\end{equation*}

The desired bound on the remaining interval $N^{-2}I\cap(-T_1N^{-2},T_1N^{-2})$ follows from Lemma \ref{step1} (ii) with $\rho=0$.
\end{proof}

\section{Profile decomposition in hyperbolic spaces}\label{profile}

In this section we show that given a bounded sequence of functions $f_k\in H^1(\mathbb{H}^3)$ we can construct certain {\it{profiles}} and express the functions $f_k$ in terms of these profiles. In other words, we prove the analogue of Keraani's theorem \cite{Ker} in the hyperbolic geometry.

Given $(f,t_0,h_0)\in L^2(\mathbb{H}^3)\times\mathbb{R}\times\mathbb{G}$ we define
\begin{equation}\label{PI}
\Pi_{t_0,h_0}f(x)=(e^{-it_0\Delta_\g}f)(h_0^{-1}x)=(\pi_{h_0}e^{-it_0\Delta_\g}f)(x).
\end{equation}
As in Section \ref{Eucl}, see \eqref{rescaled}, given $\phi\in\dot{H}^1(\mathbb{R}^3)$ and $N\geq 1$, we define
\begin{equation}\label{TN}
T_N\phi(x):=N^{1/2}\widetilde{\phi}(N\Psi_I^{-1}(x))\qquad\text{ where }\qquad\widetilde{\phi}(y):=\eta(y/N^{1/2})\cdot (e^{\Delta/N}\phi)(y),
\end{equation}
and observe that
\begin{equation}\label{TN2}
T_N:\dot{H}^1(\mathbb{R}^3)\to H^1(\mathbb{H}^3)\text{ is a bounded linear operator with }\|T_N\phi\|_{H^1(\mathbb{H}^3)}\lesssim \|\phi\|_{\dot{H}^1(\mathbb{R}^3)}.
\end{equation}
The following is our main definition.

\begin{definition}\label{DefPro}

\begin{enumerate}
\item We define a {\it{frame}} to be a sequence $\mathcal{O}_k=(N_k,t_k,h_k)\in[1,\infty)\times\mathbb{R}\times\mathbb{G}$, $k=1,2,\ldots$, where $N_k\geq 1$ is a scale, $t_k\in\mathbb{R}$ is a time, and $h_k\in\mathbb{G}$ is a translation element. We also assume that either $N_k=1$ for all $k$ (in which case we call $\{\mathcal{O}_k\}_{k\geq 1}$ a hyperbolic frame) or that $N_k\nearrow\infty$ (in which case we call $\{\mathcal{O}_k\}_{k\geq 1}$ a Euclidean frame). Let $\mathcal{F}_e$ denote the set of Euclidean frames,
\begin{equation*}
\mathcal{F}_e=\{\mathcal{O}=\{(N_k,t_k,h_k)\}_{k\geq 1}:\,N_k\in[1,\infty),\,t_k\in\mathbb{R},\,h_k\in\mathbb{G},\,N_k\nearrow\infty\},
\end{equation*}
and let $\mathcal{F}_h$ denote the set of hyperbolic frames, 
\begin{equation*}
\mathcal{F}_h=\{\widetilde{\mathcal{O}}=\{(1,t_k,h_k)\}_{k\geq 1}:t_k\in\mathbb{R},\,h_k\in\mathbb{G}\}.
\end{equation*}

\item We say that two frames $\{(N_k,t_k,h_k)\}_{k\geq 1}$ and $\{(N^\prime_k,t^\prime_k,h^\prime_k)\}_{k\geq 1}$ are {\it{equivalent}} if
\begin{equation}\label{EquivalentFrame}
\limsup_{k\to\infty}\big[\vert\ln(N_k/N_{k'})\vert+N_k^2\vert t_k-t_k^\prime\vert+N_kd(h_k\cdot\mathbf{0},h'_k\cdot\mathbf{0})\big]<+\infty.
\end{equation}
Note that this indeed defines an equivalence relation. Two frames which are not equivalent are called {\it orthogonal}.

\item
Given $\phi\in\dot{H}^1(\mathbb{R}^3)$ and a Euclidean frame $\mathcal{O}=\{\mathcal{O}_k\}_{k\geq 1}=\{(N_k,t_k,h_k)\}_{k\geq 1}\in\mathcal{F}_e$, we define {\it{the Euclidean profile associated with $(\phi,\mathcal{O})$}} as the sequence $\widetilde{\phi}_{\mathcal{O}_k}$, where
\begin{equation}\label{euclprof}
\widetilde{\phi}_{\mathcal{O}_k}:=\Pi_{t_k,h_k}(T_{N_k}\phi),\\
\end{equation}
The operators $\Pi$ and $T$ are defined in \eqref{PI} and \eqref{TN}.

\item Given $\psi\in H^1(\mathbb{H}^3)$ and a hyperbolic frame $\widetilde{\mathcal{O}}=\{\widetilde{\mathcal{O}}_k\}_{k\geq 1}=\{(1,t_k,h_k)\}_{k\geq 1}\in\mathcal{F}_h$ we define {\it{the hyperbolic profile associated with $(\psi,\widetilde{\mathcal{O}})$}} as the sequence $\widetilde{\psi}_{\widetilde{\mathcal{O}}_k}$, where
\begin{equation}\label{hypprof}
\widetilde{\psi}_{\widetilde{\mathcal{O}}_k}:=\Pi_{t_k,h_k}\psi.
\end{equation}
\end{enumerate}
\end{definition}

\begin{definition}\label{DefAbsent}

We say that a sequence $(f_k)_k$ bounded in $H^1(\mathbb{H}^3)$ is {\it absent} from a frame $\mathcal{O}=\{(N_k,t_k,h_k)\}_k$ if its localization to $\mathcal{O}$ converges weakly to $0$, i.e. if for all profiles $\widetilde{\phi}_{\mathcal{O}_k}$ associated to $\mathcal{O}$, there holds that
\begin{equation}\label{WeakConvTo0}
\lim_{k\to\infty}\langle f_k,\widetilde{\phi}_{\mathcal{O}_k}\rangle_{H^1\times H^1(\mathbb{H}^3)}=0.
\end{equation}
\end{definition}

By Lemma \ref{equiv} below, this does not depend on the choice of an equivalent frame. 

\begin{remark}
(i) If $\mathcal{O}=(1,t_k,h_k)_k$ is an hyperbolic frame, this is equivalent to saying that
\begin{equation*}
\Pi_{-t_k,h_k^{-1}}f_k\rightharpoonup 0
\end{equation*}
as $k\to\infty$ in $H^1(\mathbb{H}^3)$.

(ii) If $\mathcal{O}$ is a Euclidean frame, this is equivalent to saying that for all $R>0$
\begin{equation*}
g^R_k(v)=\eta(v/R)N_k^{-1/2}\left(\Pi_{-t_k,h_k^{-1}}f_k\right)(\Psi_I(v/N_k))\rightharpoonup 0
\end{equation*}
as $k\to\infty$ in $\dot{H}^1(\mathbb{R}^3)$.
\end{remark}

We prove first some basic properties of profiles associated to equivalent/orthogonal frames.

\begin{lemma}\label{equiv} (i) Assume $\{\mathcal{O}_k\}_{k\geq 1}=\{(N_k,t_k,h_k)\}_{k\geq 1}$ and $\{\mathcal{O}'_k\}_{k\geq 1}=\{(N'_k,t'_k,h'_k)\}_{k\geq 1}$ are two equivalent Euclidean frames (respectively hyperbolic frames), in the sense of \eqref{EquivalentFrame}, and $\phi\in\dot{H}^1(\mathbb{R}^3)$ (respectively $\phi\in H^1(\mathbb{H}^3)$). Then there is $\phi'\in\dot{H}^1(\mathbb{R}^3)$ (respectively $\phi'\in H^1(\mathbb{H}^3)$) such that, up to a subsequence,
\begin{equation}\label{mb1}
\lim_{k\to\infty}\|\widetilde{\phi}_{\mathcal{O}_k}-\widetilde{\phi'}_{\mathcal{O}'_k}\|_{H^1(\mathbb{H}^3)}=0,
\end{equation}
where $\widetilde{\phi}_{\mathcal{O}_k},\widetilde{\phi'}_{\mathcal{O}'_k}$ are as in Definition \ref{DefPro}.

(ii) Assume $\{\mathcal{O}_k\}_{k\geq 1}=\{(N_k,t_k,h_k)\}_{k\geq 1}$ and $\{\mathcal{O}'_k\}_{k\geq 1}=\{(N'_k,t'_k,h'_k)\}_{k\geq 1}$ are two orthogonal frames (either Euclidean or hyperbolic) and $\widetilde{\phi}_{\mathcal{O}_k},\widetilde{\psi}_{\mathcal{O}'_k}$ are associated profiles. Then, up to a subsequence,
\begin{equation}\label{mb50}
\lim_{k\to\infty}\Big|\int_{\mathbb{H}^3}\D^\al\widetilde{\phi}_{\mathcal{O}_k}\overline{\D_\al\widetilde{\psi}_{\mathcal{O}'_k}}\,d\mu\Big|+\lim_{k\to\infty}\|\widetilde{\phi}_{\mathcal{O}_k}\widetilde{\psi}_{\mathcal{O}'_k}\|_{L^3(\mathbb{H}^3)}=0.
\end{equation}

(iii) If $\widetilde{\phi}_{\mathcal{O}_k}$ and $\widetilde{\psi}_{\mathcal{O}_k}$ are two Euclidean profiles associated to the same frame, then
\begin{equation*}
\begin{split}
\lim_{k\to\infty}\langle\nabla_\g \widetilde{\phi}_{\mathcal{O}_k},\nabla_\g \widetilde{\psi}_{\mathcal{O}_k}\rangle_{L^2\times L^2(\mathbb{H}^3)}&=\lim_{k\to\infty}\int_{\mathbb{H}^3}\D^\al\widetilde{\phi}_{\mathcal{O}_k}\overline{\D_\al\widetilde{\psi}_{\mathcal{O}_k}}\,d\mu\\
&=\int_{\mathbb{R}^3}\nabla\phi(x)\cdot\nabla\overline{\psi}(x)dx=\langle\nabla \phi,\nabla\psi\rangle_{L^2\times L^2(\mathbb{R}^3)}
\end{split}
\end{equation*}
\end{lemma}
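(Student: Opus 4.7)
My plan is to prove the three parts in the order (iii), (i), (ii): a direct computation, a construction, then a case analysis.

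For (iii), since $\pi_{h_k}$ is a Riemannian isometry and $e^{is\Delta_\g}$ commutes with it and preserves $\int\mathbf{D}^\alpha u\,\overline{\mathbf{D}_\alpha v}\,d\mu$, I may assume $h_k=I$ and $t_k=0$, so the claim reduces to
\[
\int_{\H^3}\mathbf{D}^\alpha T_{N_k}\phi\,\overline{\mathbf{D}_\alpha T_{N_k}\psi}\,d\mu\to\int_{\R^3}\nabla\phi\cdot\nabla\overline\psi\,dx.
\]
A direct coordinate computation gives $g_{ij}(v)=\delta_{ij}-v^iv^j/(1+|v|^2)$ and $g^{ij}(v)=\delta^{ij}+v^iv^j$ in the $\Psi_I$-chart; expanding the integral via \eqref{changevar} and substituting $w=N_kv$, it becomes $\int_{|w|\le 2N_k^{1/2}}g^{ij}(w/N_k)\,\partial_i\widetilde\phi(w)\,\overline{\partial_j\widetilde\psi(w)}\,(1+|w|^2/N_k^2)^{-1/2}\,dw$. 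The scalar coefficient is bounded by a universal constant on the support and converges pointwise to $\delta^{ij}$, while $\widetilde\phi,\widetilde\psi\to\phi,\psi$ in $\dot H^1(\R^3)$ (by strong continuity of the heat semigroup on $\dot H^1$, dominated convergence for the cutoff, and Sobolev $\dot H^1\hookrightarrow L^6$ applied on the annulus $\{|x|\sim N_k^{1/2}\}$ to control the term where the derivative falls on $\eta(\cdot/N_k^{1/2})$); dominated convergence closes the argument.

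For (i), I split cases. In the hyperbolic case, equivalence bounds $|t_k-t_k'|$ and $d(h_k\cdot\mathbf{0},h_k'\cdot\mathbf{0})$; after passing to a subsequence, $t_k'-t_k\to\tau\in\R$ and $(h_k')^{-1}h_k\to g_\infty\in\mathbb{G}$ (using that the isotropy of $\mathbf{0}$ is the compact group $\mathbb{K}$, so the $\mathbb{G}$-closure of the sequence is compact). Setting $\phi'=\pi_{g_\infty}e^{i\tau\Delta_\g}\phi$ and using strong continuity of $(s,h,\phi)\mapsto\pi_h e^{is\Delta_\g}\phi$ on $H^1(\H^3)$ yields \eqref{mb1}. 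In the Euclidean case, after extraction we have $N_k/N_k'\to\mu\in(0,\infty)$, $N_k^2(t_k-t_k')\to\tau\in\R$, $N_k\Psi_I^{-1}(h_k^{-1}h_k'\cdot\mathbf{0})\to y\in\R^3$, and the rotational part of $h_k^{-1}h_k'$ (modulo the translation taking $\mathbf{0}$ to $h_k^{-1}h_k'\cdot\mathbf{0}$) converges to some $k_\infty\in\mathbb{K}$. Lemma \ref{step1} with $\rho=0$ transfers each hyperbolic profile to a rescaled Euclidean Schr\"odinger solution on the time window $|t|\lesssim N_k^{-2}$; choosing $\phi'$ to be the Euclidean function obtained from $\phi$ by Schr\"odinger evolution by time $\tau$, rotation by $k_\infty$, translation by $-y$, and dilation by $\mu$ matches the two Euclidean profiles in $\dot H^1(\R^3)$, which then lifts back to \eqref{mb1} via another application of Lemma \ref{step1}.

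For (ii), I argue by case analysis on the frame types and the orthogonality mechanism. A mixed hyperbolic/Euclidean pair is automatically orthogonal: the Euclidean profile concentrates its $L^6$-mass in a ball of hyperbolic radius $O(1/N_k)\to 0$ near $h_k\cdot\mathbf{0}$, while the hyperbolic profile is uniformly bounded in $L^6$; splitting both the $H^1$ inner product and the $L^3$ product into an inside-ball and outside-ball piece sends them to zero, using Lemma \ref{lem12}-style estimates that bounded $H^1$-sequences admit no $L^6$-concentration at shrinking scales. For two hyperbolic frames, orthogonality forces (after subsequencing) either $|t_k-t_k'|\to\infty$---handled by the dispersive estimate \eqref{dispersive} applied to $e^{i(t_k-t_k')\Delta_\g}\psi$ against a fixed test profile, interpolated with the $H^1$-bound---or $d(h_k\cdot\mathbf{0},h_k'\cdot\mathbf{0})\to\infty$---handled by $H^1$-approximation by compactly supported profiles whose supports are eventually disjoint. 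For two Euclidean frames, Lemma \ref{step1} reduces to the analogous Euclidean Keraani-type statement: diverging scales via the weak convergence $\lambda^{3/2}\phi(\lambda\,\cdot\,)\rightharpoonup 0$ in $\dot H^1(\R^3)$ as $\lambda\to 0$, plus Sobolev/H\"older for the $L^3$ factor; diverging rescaled times via Euclidean dispersion; diverging rescaled translations via disjoint spatial supports. The main obstacle is handling the cases in (i)(Euclidean) and (ii)(Euclidean--Euclidean) where $N_k^2 t_k$ or $N_k^2 t_k'$ is individually unbounded so that Lemma \ref{step1} does not directly apply; this is addressed by composing with the explicit Schr\"odinger factor $e^{i(t_k-t_k')\Delta_\g}$ to reduce to the regime $|t_k-t_k'|\lesssim N_k^{-2}$ (where the lemma applies), and absorbing the residual long-time evolution into the Euclidean profile function, on which the Euclidean Schr\"odinger group acts by isometries of $\dot H^1$.
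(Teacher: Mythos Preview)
Your treatment of parts (i) and (iii) is correct and matches the paper's approach: unitarity of $\Pi_{t,h}$ reduces (iii) to a change of variables plus dominated convergence, and in (i) the equivalence hypothesis bounds $N_k^2|t_k-t'_k|$, which is exactly what Lemma~\ref{step1} (with $\rho=0$) needs to transfer the hyperbolic linear flow to the Euclidean one and match the profiles.

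There is, however, a genuine gap in your argument for (ii). In both the mixed case and the Euclidean--Euclidean case you rely on the claim that a Euclidean profile $\widetilde{\psi}_{\mathcal{O}'_k}=\Pi_{t'_k,h'_k}T_{N'_k}\psi$ is spatially concentrated at scale $1/N'_k$, or can be transferred to a Euclidean object via Lemma~\ref{step1}. Both are only valid on time windows $|t|\lesssim N_k^{-2}$, and orthogonality gives you no control on the individual rescaled times $N_k'^2 t'_k$ or on $N_k^2|t_k-t'_k|$; in particular, in the subcase $N_k/N'_k\to\overline{N}\in(0,\infty)$, $N_k^2|t_k-t'_k|\to\infty$ your reduction fails. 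Your proposed fix (``absorbing the residual long-time evolution into the Euclidean profile function'') is circular: converting $e^{-i(t_k-t'_k)\Delta_\g}T_{N_k}\phi$ into $T_{N_k}(e^{-iN_k^2(t_k-t'_k)\Delta}\phi)$ \emph{is} the content of Lemma~\ref{step1}, and it is simply false when $N_k^2|t_k-t'_k|\to\infty$ --- the hyperbolic and Euclidean linear flows diverge on long rescaled times. The composition with $e^{i(t_k-t'_k)\Delta_\g}$ correctly reduces the $H^1$ pairing to depend only on $t_k-t'_k$, but it cannot make that difference small.

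The paper avoids this by never reducing those subcases to a Euclidean statement. For the mixed case it uses the \emph{time-invariant} smallness $\|T_{N'_k}\psi\|_{L^2(\mathbb{H}^3)}\lesssim (N'_k)^{-1}$ (preserved by $e^{it\Delta_\g}$ and $\pi_h$) to bound $|\langle\Delta_\g\widetilde{\phi}_{\mathcal{O}_k},\widetilde{\psi}_{\mathcal{O}'_k}\rangle_{L^2}|$, and an interpolated Sobolev norm $\|(-\Delta_\g)^{1/4}T_{N'_k}\psi\|_{L^2}\lesssim (N'_k)^{-1/2}$ for the $L^3$ product. For the Euclidean--Euclidean diverging-time subcase it uses the direct hyperbolic dispersive bound $\|\Pi_{t,h}(T_N f)\|_{L^6}\lesssim_f(1+N^2|t|)^{-1}$ (paired with $\|\Delta_\g T_{N'_k}\psi\|_{L^{6/5}}\lesssim_\psi 1$), rather than any Euclidean dispersion. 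Only in the remaining subcase --- comparable scales, bounded rescaled time difference, diverging rescaled translation --- does one localize to disjoint supports in $\mathbb{H}^3$, and even there the paper works directly on $\mathbb{H}^3$ rather than passing through the Euclidean Keraani lemma.
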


\begin{proof}[Proof of Lemma \ref{equiv}] (i) The proof follows from the definitions if $\{\mathcal{O}_k\}_{k\geq 1},\{\mathcal{O}'_k\}_{k\geq 1}$ are hyperbolic frames: by passing to a subsequence we may assume $\lim_{k\to\infty}-t'_k+t_k=\overline{t}$ and $\lim_{k\to\infty}{h'_k}^{-1}h_k=\overline{h}$, and define
\begin{equation*}
\phi':=\Pi_{\overline{t},\overline{h}}\phi.
\end{equation*}

To prove the claim if $\{\mathcal{O}_k\}_{k\geq 1},\{\mathcal{O}'_k\}_{k\geq 1}$ are equivalent Euclidean frames, we decompose first, using the Cartan decomposition \eqref{cartan}
\begin{equation}\label{mb2.5}
{h'_k}^{-1}h_k=m_ka_{s_k}n_k,\qquad m_k,n_k\in\mathbb{K},\,s_k\in[0,\infty).
\end{equation}
Therefore, using the compactness of the subgroup $\mathbb{K}$ and the definition \eqref{EquivalentFrame}, after passing to a subsequence, we may assume that
\begin{equation}\label{mb3}
\begin{split}
&\lim_{k\to\infty}N_k/N'_{k}=\overline{N},\quad \lim_{k\to\infty}N_k^2(t_k-t'_{k})=\overline{t},\\
&\lim_{k\to\infty}m_k=m,\quad\lim_{k\to\infty}n_k=n,\quad\lim_{k\to\infty}N_ks_k=\overline{s}.
\end{split}
\end{equation}
We observe that for any $N\geq 1$, $\psi\in\dot{H}^1(\mathbb{R}^3)$, $t\in\mathbb{R}$, $g\in\mathbb{G}$, and $q\in\mathbb{K}$
\begin{equation*}
\Pi_{t,gq}(T_N\psi)=\Pi_{t,g}(T_N\psi_q)\,\,\text{ where }\,\,\psi_q(x)=\psi(q^{-1}\cdot x).
\end{equation*}
Therefore, in \eqref{mb2.5} we may assume that
\begin{equation*}
m_k=n_k=I,\qquad {h'_k}^{-1}h_k=a_{s_k}.
\end{equation*}

With $\overline{x}=(\overline{s},0,0)$, we define
\begin{equation*}
\phi'(x):=\overline{N}^{1/2}(e^{-i\overline{t}\Delta}\phi)(\overline{N}x-\overline{x}),\qquad\phi'\in\dot{H}^1(\mathbb{R}^3),
\end{equation*}
and define $\widetilde{\phi'}$, $\widetilde{\phi'}_{N'_k}$, and $\widetilde{\phi'}_{\mathcal{O}'_k}$ as in \eqref{euclprof}. The identity \eqref{mb1} is equivalent to
\begin{equation}\label{mb6}
\lim_{k\to\infty}\|T_{N'_k}\phi'-\pi_{{h'_k}^{-1}h_k}e^{i(t'_k-t_k)\Delta_\g}(T_{N_k}\phi)\|_{H^1(\mathbb{H}^3)}=0.
\end{equation}

To prove \eqref{mb6} we may assume that $\phi'\in C^\infty_0(\mathbb{R}^3)$, $\phi\in H^5(\mathbb{R}^3)$, and apply Lemma \ref{step1} (ii) with $\rho=0$. Let $v(x,t)=(e^{it\Delta}\phi)(x)$ and, for $R\geq 1$,
\begin{equation*}
v_R(x,t)=\eta(x/R)v(x,t),\,\,v_{R,N_k}(x,t)=N_k^{1/2}v_R(N_kx,N_k^2t),\,\,V_{R,N_k}(y,t)=v_{R,N_k}(\Psi_I^{-1}(y),t).
\end{equation*}
It follows from Lemma \ref{step1} (ii) that for any $\varepsilon>0$ sufficiently small there is $R_0$ sufficiently large such that, for any $R\geq  R_0$,
\begin{equation}\label{mb7}
\limsup_{k\to\infty}\|e^{i(t'_k-t_k)\Delta_\g}(T_{N_k}\phi)-V_{R,N_k}(t'_k-t_k)\|_{H^1(\mathbb{H}^3)}\leq\varepsilon.
\end{equation}
Therefore, to prove \eqref{mb6} it suffices  to show that, for $R$ large enough,
\begin{equation*}
\limsup_{k\to\infty}\|\pi_{{h_k}^{-1}h'_k}(T_{N'_k}\phi')-V_{R,N_k}(t'_k-t_k)\|_{H^1(\mathbb{H}^3)}\lesssim\varepsilon,
\end{equation*}
which, after examining the definitions and recalling that $\phi'\in C^\infty_0(\mathbb{R}^3)$, is equivalent to
\begin{equation*}
\limsup_{k\to\infty}\|{N'_k}^{1/2}\phi'(N'_k\Psi_I^{-1}({h'_k}^{-1}h_k\cdot y))-N_k^{1/2}v_R(N_k\Psi_I^{-1}(y),N_k^2(t'_k-t_k))\|_{H^1_y(\mathbb{H}^3)}\lesssim\varepsilon.
\end{equation*}
After changing variables $y=\Psi_I(x)$ this is equivalent to
\begin{equation*}
\limsup_{k\to\infty}\|{N'_k}^{1/2}\phi'(N'_k\Psi_I^{-1}({h'_k}^{-1}h_k\cdot \Psi_I(x)))-N_k^{1/2}v_R(N_kx,N_k^2(t'_k-t_k))\|_{\dot{H}^1_x(\mathbb{R}^3)}\lesssim\varepsilon.
\end{equation*}
Since, by definition, $\phi'(z)=\overline{N}^{1/2}v(\overline{N}z-\overline{x},-\overline{t})$, this follows provided that
\begin{equation*}
\lim_{k\to\infty} N_k\Psi_I^{-1}({h'_k}^{-1}h_k\cdot \Psi_I(x/N_k))-x=\overline{x}\qquad\text{ for any }x\in \mathbb{R}^3.
\end{equation*}
This last claim follows by explicit computations using \eqref{mb3} and the definition \eqref{PsiCoord}.
\medskip

(ii) We analyze three cases:

{\bf{Case 1: $\mathcal{O},\mathcal{O}'\in \mathcal{F}_h$.}} We may assume that $\phi,\psi\in C^\infty_0(\mathbb{H}^3)$ and select a subsequence such that either
\begin{equation}\label{plu1}
\lim_{k\to\infty}|t_k-t'_k|=\infty
\end{equation}
or
\begin{equation}\label{plu5}
\lim_{k\to\infty} t_k-t'_k=\overline{t}\in\mathbb{R},\qquad \lim_{k\to\infty}d(h_k\cdot\mathbf{0},h'_k\cdot\mathbf{0})=\infty.
\end{equation}
Using \eqref{dispersive} it follows that
\begin{equation*}
\begin{split}
&\|\Pi_{t,h}\phi\|_{L^6(\mathbb{H}^3)}+\|\Pi_{t,h}(\Delta_\g\phi)\|_{L^6(\mathbb{H}^3)}\lesssim_{\phi}(1+|t|)^{-1}\\
&\|\Pi_{t,h}\psi\|_{L^6(\mathbb{H}^3)}+\|\Pi_{t,h}(\Delta_\g\psi)\|_{L^6(\mathbb{H}^3)}\lesssim_{\psi}(1+|t|)^{-1},
\end{split}
\end{equation*}
for any $t\in\mathbb{R}$ and $h\in\mathbb{G}$. Thus
\begin{equation}\label{plu4}
\|\widetilde{\phi}_{\mathcal{O}_k}\widetilde{\psi}_{\mathcal{O}'_k}\|_{L^3(\mathbb{H}^3)}\leq \|\Pi_{t_k,h_k}\phi\|_{L^6(\mathbb{H}^3)}\|\Pi_{t'_k,h'_k}\psi\|_{L^6(\mathbb{H}^3)}\lesssim _{\phi,\psi}(1+|t_k|)^{-1}(1+|t'_k|)^{-1},
\end{equation}
and
\begin{equation*}
\begin{split}
\Big|\int_{\mathbb{H}^3}\D^\al\widetilde{\phi}_{\mathcal{O}_k}&\overline{\D_\al\widetilde{\psi}_{\mathcal{O}'_k}}\,d\mu\Big|=\Big|\int_{\mathbb{H}^3}\Delta_\g\widetilde{\phi}_{\mathcal{O}_k}\cdot \overline{\widetilde{\psi}_{\mathcal{O}'_k}}\,d\mu\Big|=\Big|\int_{\mathbb{H}^3}\pi_{{h'_k}^{-1}h_k}e^{-i(t_k-t'_k)\Delta_\g}(\Delta_\g\phi)\cdot \overline{\psi}\,d\mu\Big|\\
&\lesssim\|\pi_{{h'_k}^{-1}h_k}e^{-i(t_k-t'_k)\Delta_\g}(\Delta_\g\phi)\|_{L^6(\mathbb{H}^3)}\|\psi\|_{L^{6/5}(\mathbb{H}^3)}\lesssim_{\phi,\psi}(1+|t_k-t'_k|)^{-1}.
\end{split}
\end{equation*}
The claim \eqref{mb50} follows if the selected subsequence verifies \eqref{plu1}.

If the selected subsequence verifies \eqref{plu5} then, as before,
\begin{equation*}
\begin{split}
\Big|&\int_{\mathbb{H}^3}\D^\al\widetilde{\phi}_{\mathcal{O}_k}\overline{\D_\al\widetilde{\psi}_{\mathcal{O}'_k}}\,d\mu\Big|=\Big|\int_{\mathbb{H}^3}\pi_{{h'_k}^{-1}h_k}e^{-i(t_k-t'_k)\Delta_g}\phi\cdot \overline{\Delta_\g\psi}\,d\mu\Big|\\
&\lesssim\|\Delta_\g\psi\|_{L^2(\mathbb{H}^3)}\cdot \|e^{-i\overline{t}\Delta_\g}\phi-e^{-i(t_k-t'_k)\Delta_\g}\phi\|_{L^2(\mathbb{H}^3)}+\int_{\mathbb{H}^3}|e^{-i\overline{t}\Delta_g}\phi|\cdot |\pi_{h_k^{-1}h'_k}\Delta_\g\psi|\,d\mu.
\end{split}
\end{equation*}
The first limit in \eqref{mb50} follows. Using the bound \eqref{plu4}, the second limit in \eqref{mb50} also follows, up to a subsequence, if $\limsup_{k\to\infty}|t_k|=\infty$. Otherwise, we may assume that $\lim_{k\to\infty}t_k=T$, $\lim_{k\to\infty}t'_k=T'=T-\overline{t}$ and estimate
\begin{equation*}
\begin{split}
\|\widetilde{\phi}_{\mathcal{O}_k}\widetilde{\psi}_{\mathcal{O}'_k}\|_{L^3(\mathbb{H}^3)}&=\|e^{-it_k\Delta_\g}\pi_{h_k}\phi\cdot e^{-it'_k\Delta_\g}\pi_{h'_k}\psi\|_{L^3(\mathbb{H}^3)}\\
\lesssim_{\phi,\psi}&\|e^{-it_k\Delta_\g}\phi-e^{-iT\Delta_\g}\phi\|_{L^6(\mathbb{H}^3)}+\|e^{-it'_k\Delta_\g}\psi-e^{-iT'\Delta_\g}\psi\|_{L^6(\mathbb{H}^3)}\\
&+\|e^{-iT\Delta_\g}\phi\cdot \pi_{h_k^{-1}h'_k}(e^{-iT'\Delta_\g}\psi)\|_{L^3(\mathbb{H}^3)}.
\end{split}
\end{equation*}
The second limit in \eqref{mb50} follows in this case as well.

{\bf{Case 2: $\mathcal{O}\in\mathcal{F}_h$, $\mathcal{O}'\in \mathcal{F}_e$.}} We may assume that $\phi\in C^\infty_0(\mathbb{H}^3)$ and $\psi\in C^\infty_0(\mathbb{R}^3)$. We estimate
\begin{equation*}
\begin{split}
\Big|&\int_{\mathbb{H}^3}\D^\al\widetilde{\phi}_{\mathcal{O}_k}\overline{\D_\al\widetilde{\psi}_{\mathcal{O}'_k}}\,d\mu\Big|=\Big|\int_{\mathbb{H}^3}\Pi_{t_k,h_k}(\Delta_g\phi)\cdot \overline{\Pi_{t'_k,h'_k}(T_{N'_k}\psi)}\,d\mu\Big|\lesssim_\phi\|T_{N'_k}\psi\|_{L^2(\mathbb{H}^3)}\lesssim_{\phi,\psi}{N'_k}^{-1}
\end{split}
\end{equation*}
and
\begin{equation*}
\begin{split}
\|\widetilde{\phi}_{\mathcal{O}_k}\widetilde{\psi}_{\mathcal{O}'_k}\|_{L^3(\mathbb{H}^3)}&\leq \|\Pi_{t_k,h_k}\phi\|_{L^\infty(\mathbb{H}^3)}\cdot\|\Pi_{t'_k,h'_k}(T_{N'_k}\psi)\|_{L^3(\mathbb{H}^3)}\\
&\lesssim \|\Delta_\g\phi\|_{L^2(\mathbb{H}^3)}\cdot\|(-\Delta_\g)^{1/4}(T_{N'_k}\psi)\|_{L^2(\mathbb{H}^3)}\\
&\lesssim_{\phi,\psi}{N'_k}^{-1/2}.
\end{split}
\end{equation*}
The limits in \eqref{mb50} follow.
\medskip

{\bf{Case 3: $\mathcal{O},\mathcal{O}'\in \mathcal{F}_e$.}} We may assume that $\phi,\psi\in C^\infty_0(\mathbb{R}^3)$ and select a subsequence such that either
\begin{equation}\label{plu10}
\lim_{k\to\infty}N_k/N'_k=0,
\end{equation}
or
\begin{equation}\label{plu11}
\lim_{k\to\infty}N_k/N'_k=\overline{N}\in(0,\infty),\qquad\lim_{k\to\infty} N_k^2|t_k-t'_k|=\infty,
\end{equation}
or
\begin{equation}\label{plu12}
\lim_{k\to\infty}N_k/N'_k=\overline{N}\in(0,\infty),\quad\lim_{k\to\infty} N_k^2(t_k-t'_k)=\overline{t}\in\mathbb{R},\quad \lim_{k\to\infty}N_kd(h_k\cdot\mathbf{0},h'_k\cdot\mathbf{0})=\infty.
\end{equation}

Assuming \eqref{plu10} we estimate, as in {\bf{Case 2}}, 
\begin{equation*}
\begin{split}
\Big|\int_{\mathbb{H}^3}\D^\al\widetilde{\phi}_{\mathcal{O}_k}\overline{\D_\al\widetilde{\psi}_{\mathcal{O}'_k}}\,d\mu\Big|&=\Big|\int_{\mathbb{H}^3}\Pi_{t_k,h_k}(\Delta_g(T_{N_k}\phi))\cdot \overline{\Pi_{t'_k,h'_k}(T_{N'_k}\psi)}\,d\mu\Big|\\
&\lesssim\|\Delta_\g(T_{N_k}\phi)\|_{L^2(\mathbb{H}^3)}\|T_{N'_k}\psi\|_{L^2(\mathbb{H}^3)}\\
&\lesssim_{\phi,\psi}N_k{N'_k}^{-1}
\end{split}
\end{equation*}
and
\begin{equation*}
\begin{split}
\|\widetilde{\phi}_{\mathcal{O}_k}\widetilde{\psi}_{\mathcal{O}'_k}\|_{L^3(\mathbb{H}^3)}&\leq \|\Pi_{t_k,h_k}(T_{N_k}\phi)\|_{L^9(\mathbb{H}^3)}\cdot\|\Pi_{t'_k,h'_k}(T_{N'_k}\psi)\|_{L^{9/2}(\mathbb{H}^3)}\\
&\lesssim \|(-\Delta_\g)^{7/12}(T_{N_k}\phi)\|_{L^2(\mathbb{H}^3)}\cdot\|(-\Delta_\g)^{5/12}(T_{N'_k}\psi)\|_{L^2(\mathbb{H}^3)}\\
&\lesssim_{\phi,\psi}N_k^{1/6}{N'_k}^{-1/6}.
\end{split}
\end{equation*}
The limits in \eqref{mb50} follow in this case.

To prove the limit \eqref{mb50} assuming \eqref{plu11}, we estimate first, using \eqref{dispersive},
\begin{equation}\label{plu15}
\|\Pi_{t,h}(T_N f)\|_{L^6(\mathbb{H}^3)}\lesssim_f (1+N^2|t|)^{-1},
\end{equation}
for any $t\in\mathbb{R}$, $h\in\mathbb{G}$, $N\in[0,\infty)$, and $f\in C^\infty_0(\mathbb{R}^3)$. Thus
\begin{equation*}
\begin{split}
\|\widetilde{\phi}_{\mathcal{O}_k}\widetilde{\psi}_{\mathcal{O}'_k}\|_{L^3(\mathbb{H}^3)}&\leq \|\Pi_{t_k,h_k}(T_{N_k}\phi)\|_{L^6(\mathbb{H}^3)}\|\Pi_{t'_k,h'_k}(T_{N'_k}\psi)\|_{L^6(\mathbb{H}^3)}\\
&\lesssim _{\phi,\psi}(1+N_k^2|t_k|)^{-1}(1+{N'_k}^2|t'_k|)^{-1},
\end{split}
\end{equation*}
and
\begin{equation*}
\begin{split}
\Big|\int_{\mathbb{H}^3}\D^\al\widetilde{\phi}_{\mathcal{O}_k}\overline{\D_\al\widetilde{\psi}_{\mathcal{O}'_k}}\,d\mu\Big|&=\Big|\int_{\mathbb{H}^3}\widetilde{\phi}_{\mathcal{O}_k}\cdot \overline{\Delta_\g\widetilde{\psi}_{\mathcal{O}'_k}}\,d\mu\Big|\\
&=\Big|\int_{\mathbb{H}^3}\pi_{{h'_k}^{-1}h_k}e^{-i(t_k-t'_k)\Delta_\g}(T_{N_k}\phi)\cdot \overline{\Delta_\g(T_{N'_k}\psi)}\,d\mu\Big|\\
&\lesssim\|\pi_{{h'_k}^{-1}h_k}e^{-i(t_k-t'_k)\Delta_\g}(T_{N_k}\phi)\|_{L^6(\mathbb{H}^3)}\|\Delta_\g(T_{N'_k}\psi)\|_{L^{6/5}(\mathbb{H}^3)}\\
&\lesssim_{\phi,\psi}(1+N_k^2|t_k-t'_k|)^{-1}.
\end{split}
\end{equation*}
The claim \eqref{mb50} follows if the selected subsequence verifies \eqref{plu11}.

Finally, it remains to prove the limit \eqref{mb50} if the selected subsequence verifies \eqref{plu12}. For this we will use the following claim: if $(g_k,M_k)_{k\geq 1}\in\mathbb{G}\times[1,\infty)$, $\lim_{k\to\infty}M_k=\infty$, $\lim_{k\to\infty}M_kd(g_k\cdot{\bf{0}},{\bf{0}})=\infty$, and $f,g\in\dot{H}^1(\mathbb{R}^3)$ then
\begin{equation}\label{plu30}
\lim_{k\to\infty}\Big|\int_{\mathbb{H}^3}\pi_{g_k}(-\Delta_\g)^{1/2}(T_{M_k}f)\cdot (-\Delta_\g)^{1/2}(T_{M_k}g)\,d\mu\Big|+\|\pi_{g_k}(T_{M_k}f)\cdot (T_{M_k}g)\|_{L^3(\mathbb{H}^3)}=0.
\end{equation}

Assuming this, we can complete the proof of \eqref{mb50}. It follows from \eqref{mb6} that if $f\in\dot{H}^1(\mathbb{R})$ and $\{s_k\}_{k\geq 1}$ is a sequence with the property that $\lim_{k\to\infty}N_k^2s_k=\overline{s}\in\mathbb{R}$ then
\begin{equation}\label{plu31}
\lim_{k\to\infty}\|e^{-is_k\Delta_\g}(T_{N_k}f)-T_{N'_k}f'\|_{H^1(\mathbb{H}^3)}=0,
\end{equation}
where $f'(x)=\overline{N}^{1/2}(e^{-i\overline{s}\Delta}f)(\overline{N}x)$. We estimate
\begin{equation*}
\begin{split}
\Big|\int_{\mathbb{H}^3}\D^\al\widetilde{\phi}_{\mathcal{O}_k}\overline{\D_\al\widetilde{\psi}_{\mathcal{O}'_k}}\,d\mu\Big|&=\Big|\int_{\mathbb{H}^3}(-\Delta_\g)^{1/2}\pi_{{h'_k}^{-1}h_k}e^{-i(t_k-t'_k)\Delta_\g}(T_{N_k}\phi)\cdot \overline{(-\Delta_\g)^{1/2}(T_{N'_k}\psi)}\,d\mu\Big|\\
&\lesssim\Big|\int_{\mathbb{H}^3}(-\Delta_\g)^{1/2}\pi_{{h'_k}^{-1}h_k}(T_{N'_k}\phi')\cdot \overline{(-\Delta_\g)^{1/2}(T_{N'_k}\psi)}\,d\mu\Big|\\
&+\|\psi\|_{\dot{H}^1(\mathbb{R}^3)}\cdot\|\pi_{{h'_k}^{-1}h_k}e^{-i(t_k-t'_k)\Delta_\g}(T_{N_k}\phi)-\pi_{{h'_k}^{-1}h_k}(T_{N'_k}\phi')\|_{H^1(\mathbb{H}^3)}.
\end{split}
\end{equation*}
In view of \eqref{plu30} and \eqref{plu31}, both terms in the expression above converge to $0$ as $k\to\infty$, as desired. If $\lim_{k\to\infty}N_k^2|t_k|=\infty$ then, using \eqref{plu15}, we estimate
\begin{equation*}
\|\widetilde{\phi}_{\mathcal{O}_k}\widetilde{\psi}_{\mathcal{O}'_k}\|_{L^3(\mathbb{H}^3)}\leq \|\Pi_{t_k,h_k}(T_{N_k}\phi)\|_{L^6(\mathbb{H}^3)}\|\Pi_{t'_k,h'_k}(T_{N'_k}\psi)\|_{L^6(\mathbb{H}^3)}\lesssim _{\phi,\psi}(1+N_k^2|t_k|)^{-1},
\end{equation*}
which converges to $0$ as $k\to\infty$. Otherwise, up to a subsequence, we may assume that $\lim_{k\to\infty}N_k^2t_k=T\in\mathbb{R}$, $\lim_{k\to\infty}$ and write
\begin{equation*}
\|\widetilde{\phi}_{\mathcal{O}_k}\widetilde{\psi}_{\mathcal{O}'_k}\|_{L^3(\mathbb{H}^3)}=\|\pi_{{h'_k}^{-1}h_k}e^{-it_k\Delta_g}(T_{N_k}\phi)\cdot e^{-it'_k\Delta_g}(T_{N'_k}\psi)\|_{L^3(\mathbb{H}^3)}.
\end{equation*}
This converges to $0$ as $k\to\infty$, using \eqref{plu30} and \eqref{plu31}, as desired.

It remains to prove the claim \eqref{plu30}. In view of the $\dot{H}^1(\mathbb{R}^3)\to H^1(\mathbb{H}^3)$ boundedness of the operators $T_N$, we may assume that $f,g\in C^\infty_0(\mathbb{R}^3)$. In this case, the supports of the functions $\pi_{g_k}(T_{M_k}f)$ and $T_{M_k}g$ become disjoint for $k$ sufficiently large (due to the assumption $\lim_{k\to\infty}M_kd(g_k\cdot{\bf{0}},{\bf{0}})=\infty$), and the limit \eqref{plu30} follows.

(iii) By boundedness of $T_{N_k}$, it suffices to consider the case when $\phi,\psi\in C^\infty_0(\mathbb{R}^3)$. In this case, we have that
$$\Vert \nabla_\g\left(T_{N_k}\phi-N_k^{1/2}\phi(N_k\Psi_I^{-1}\cdot)\right)\Vert_{L^2(\mathbb{H}^3)}\to 0$$
as $k\to\infty$. Hence, by unitarity of $\Pi_{t_k,h_k}$, it suffices to compute
\begin{equation*}
\begin{split}
&\lim_{k\to\infty}N_k\langle\nabla_\g\left(\phi(N_k\Psi^{-1}\cdot)\right),\nabla_\g\left(\psi(N_k\Psi_I^{-1}\cdot)\right)\rangle_{L^2\times L^2(\mathbb{H}^3)}=\int_{\mathbb{R}^3}\nabla\phi(x)\cdot\nabla\overline{\psi}(x)dx,
\end{split}
\end{equation*}
which follows after a change of variables and use of the dominated convergence theorem.
\end{proof}

Our main result in this section is the following.

\begin{proposition}\label{ProfileDec}
Assume that $(f_k)_{k\geq 1}$ is a bounded sequence in $H^1(\mathbb{H}^3)$. Then there are sequences of pairs $(\phi^\mu,\mathcal{O}^\mu)\in\dot{H}^1(\mathbb{R}^3)\times\mathcal{F}_e$ and $(\psi^\nu,\widetilde{\mathcal{O}}^\nu)\in H^1(\mathbb{H}^3)\times\mathcal{F}_h$, $\mu,\nu=1,2,\ldots$, such that, up to a subsequence, for any $J\geq 1$,
\begin{equation}\label{LinProfileDec}
f_k=\sum_{1\le\mu\le J}\widetilde{\phi}^\mu_{\mathcal{O}^\mu_k}+\sum_{1\le\nu\le J}\widetilde{\psi}^\nu_{\widetilde{\mathcal{O}}^\nu_k}+r_k^J,
\end{equation}
where $\widetilde{\phi}^\mu_{\mathcal{O}^\mu_k}$ and $\widetilde{\psi}^\nu_{\widetilde{\mathcal{O}}^\nu}$ are the associated profiles in Definition \ref{DefPro}, and
\begin{equation}\label{rkJSmall}
\lim_{J\to\infty}\limsup_{k\to\infty}\left(\sup_{N\ge 1,t\in\mathbb{R},x\in\mathbb{H}^3}N^{-1/2}\vert P_Ne^{it\Delta_\g}r_k^J\vert(x)\right)=0.
\end{equation}
Moreover the frames $\{\mathcal{O}^\mu\}_{\mu\geq 1}$ and $\{\widetilde{\mathcal{O}}^\nu\}_{\nu\geq 1}$ are  pairwise orthogonal. Finally, the decomposition is asymptotically orthogonal in the sense that
\begin{equation}\label{AlmostOrtho}
\begin{split}
\lim_{J\to\infty}\limsup_{k\to\infty}\Big|E^1(f_k)-\sum_{1\leq\mu\leq J}E^1(\widetilde{\phi}^\mu_{\mathcal{O}^\mu_k})-\sum_{1\leq\nu\leq J}E^1(\widetilde{\psi}^\nu_{\widetilde{\mathcal{O}}^\nu_k})-E^1(r_k^J)\Big|= 0,
\end{split}
\end{equation}
where $E^1$ is the energy defined in \eqref{conserve}.
\end{proposition}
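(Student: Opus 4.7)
The plan is a Keraani-style iterative extraction, monitored by
\[
\Lambda(g) := \sup_{N\geq 1,\,t\in\R,\,x\in\mathbb{H}^3} N^{-1/2}|P_N e^{it\Delta_\g}g|(x).
\]
By Lemma \ref{lem12}(ii), $\Lambda$ controls the $L^6$-norm of any $H^1$-function via $\|g\|_{L^6}^3\lesssim\|\nabla_\g g\|_{L^2}\Lambda(g)^2$, which is precisely the input needed to make the remainder $r_k^J$ negligible in the energy at the end. Starting from $r_k^0:=f_k$, I will extract one profile at each step from $r_k^{J-1}$, subtract it to form $r_k^J$, and terminate (in the limit sense) once $\delta_J := \limsup_k\Lambda(r_k^{J-1})\to 0$.

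For the single-step extraction, assume $\limsup_k\Lambda(g_k)=\delta>0$. After a subsequence I pick $(N_k,t_k,x_k)$ with $N_k^{-1/2}|P_{N_k}e^{it_k\Delta_\g}g_k(x_k)|\geq\delta/2$; by \eqref{ml0} I may assume $N_k\to N_\infty\in\{1,\infty\}$. Transitivity of $\mathbb{G}$ on $\mathbb{H}^3$ lets me choose $h_k\in\mathbb{G}$ with $h_k\cdot\mathbf{0}=x_k$; I then set $v_k:=e^{it_k\Delta_\g}\pi_{h_k^{-1}}g_k$, which is uniformly bounded in $H^1(\mathbb{H}^3)$. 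In the hyperbolic case $N_\infty=1$, I take $\psi$ to be the weak $H^1$-limit of $v_k$: since the convolution kernel $y\mapsto P_1(d(\mathbf{0},y))$ lies in $L^{6/5}(\mathbb{H}^3)$, testing against it through the weak-$L^6$ convergence $v_k\rightharpoonup\psi$ yields $|P_1\psi(\mathbf{0})|\geq\delta/2$, hence $\|\psi\|_{H^1}\gtrsim\delta$, and the new profile is $\Pi_{t_k,h_k}\psi$. In the Euclidean case $N_\infty=\infty$, I rescale to $\phi_k(y):=N_k^{-1/2}v_k(\Psi_I(y/N_k))$, extract a weak $\dot H^1(\mathbb{R}^3)$-limit $\phi$, and use the correspondence between $P_{N_k}v_k(\mathbf{0})$ at hyperbolic scale $N_k^{-1}$ and the Euclidean heat of $\phi_k$ at unit scale to conclude $\|\phi\|_{\dot H^1}\gtrsim\delta$; the new profile is $\Pi_{t_k,h_k}(T_{N_k}\phi)$.

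By construction, along each previously extracted frame $\mathcal{O}_k^j$, the new remainder $r_k^J$ satisfies a weak-to-zero property which together with Lemma \ref{equiv}(iii) delivers the Pythagorean identity
\[
\|\nabla_\g r_k^{J-1}\|_{L^2}^2 = \|\nabla_\g(\text{new profile})\|_{L^2}^2 + \|\nabla_\g r_k^J\|_{L^2}^2 + o_k(1).
\]
Since each profile has squared $\dot H^1$-mass $\gtrsim\delta_J^2$, the series $\sum_J\delta_J^2$ is bounded by $\limsup_k\|f_k\|_{H^1}^2$, forcing $\delta_J\to 0$ and giving \eqref{rkJSmall}. Pairwise orthogonality of the extracted frames then comes for free: two equivalent frames $\mathcal{O}^\mu\sim\mathcal{O}^\nu$ (with $\mu<\nu$) would, by Lemma \ref{equiv}(i), allow the $\nu$-th profile to be detected in $r_k^{\mu}$, contradicting the weak-limit construction at stage $\mu+1$.

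Energy decoupling \eqref{AlmostOrtho} will then follow piece by piece. The gradient part iterates the Pythagorean identity above, using Lemma \ref{equiv}(ii) to kill cross-terms $\int\D^\al\widetilde\phi^\mu_{\mathcal O^\mu_k}\overline{\D_\al\widetilde\phi^\nu_{\mathcal O^\nu_k}}d\mu\to 0$ for orthogonal frames. The $L^6$ part decouples similarly: Lemma \ref{equiv}(ii) gives $\|\widetilde\phi^\mu_{\mathcal O^\mu_k}\widetilde\phi^\nu_{\mathcal O^\nu_k}\|_{L^3}\to 0$, which via Hölder annihilates every mixed sixth-power integral; for the remainder contribution, Lemma \ref{lem12}(ii) yields $\|r_k^J\|_{L^6}^6\lesssim\|\nabla_\g r_k^J\|_{L^2}^2\Lambda(r_k^J)^4\to 0$ as $J\to\infty$. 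The main obstacle will be the Euclidean extraction step: I must show both that the rescaled weak limit $\phi\in\dot H^1(\mathbb{R}^3)$ is genuinely nonzero and that subtracting $\Pi_{t_k,h_k}(T_{N_k}\phi)$ really removes the targeted high-frequency concentration. This rests on a careful comparison between the hyperbolic heat projection $P_{N_k}$ at scale $N_k^{-1}\ll 1$ near $\mathbf{0}$ and its Euclidean counterpart, which is precisely the content of the Euclidean approximation of Section \ref{Eucl} (notably Lemma \ref{step1}(ii) with $\rho=0$).
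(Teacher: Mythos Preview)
Your proposal is correct and follows essentially the same route as the paper: the paper packages the iterative extraction through an auxiliary finitary statement (Lemma \ref{FiniteProfileDec}) applied at thresholds $\delta=2^{-l}$, but the single-step extraction, the dichotomy between hyperbolic and Euclidean frames, the Pythagorean decoupling via weak limits and Lemma \ref{equiv}, and the $L^6$-decoupling via Lemma \ref{lem12}(ii) are all exactly as you describe. The only cosmetic difference is that for the Euclidean detection step the paper invokes the explicit convergence of the rescaled hyperbolic heat kernel to the Euclidean Gaussian (formula \eqref{ConvHK}) rather than Lemma \ref{step1}(ii), but either tool does the job.
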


The profile decomposition in Proposition \ref{ProfileDec} is a consequence of the following finitary decomposition.

\begin{lemma}\label{FiniteProfileDec}
Let $(f_k)_{k\geq 1}$ be a bounded sequence of functions in $H^1(\mathbb{H}^3)$ and let $\delta\in(0,\delta_0]$ be sufficiently small. Up to passing to a subsequence, the sequence $(f_k)_{k\geq 1}$ can be decomposed into $2J+1=O(\delta^{-2})$ terms
\begin{equation}\label{DecOfF}
f_k=\sum_{1\le \mu\le J} \widetilde{\phi}^\mu_{\mathcal{O}^\mu_k}+\sum_{1\le \nu\le J} \widetilde{\psi}^\nu_{\widetilde{\mathcal{O}}^\nu_k}+r_k,
\end{equation}
where $\widetilde{\phi}^\mu_{\mathcal{O}^\mu_k}$ (respectively $\widetilde{\psi}^\nu_{\widetilde{\mathcal{O}}^\nu_k}$) are Euclidean (respectively hyperbolic) profiles associated to sequences $(\phi^\mu,\mathcal{O}^\mu)\in\dot{H}^1(\mathbb{R}^3)\times\mathcal{F}_e$ (respectively $(\psi^\nu,\widetilde{\mathcal{O}}^\nu)\in H^1(\mathbb{H}^3)\times\mathcal{F}_h$) as  in Definition \ref{DefPro}.

Moreover the remainder $r_k$ is absent from all the frames $\mathcal{O}^\mu$, $\widetilde{\mathcal{O}}^\nu$, $1\le\mu,\nu\le J$ and
\begin{equation}\label{r_kSmall}
\limsup_{k\to\infty}\left(\sup_{N\ge 1,t\in\mathbb{R},x\in\mathbb{H}^3}N^{-1/2}\vert e^{it\Delta_\g}P_Nr_k\vert(x)\right)\le \delta.
\end{equation}
In addition, the frames $\mathcal{O}^\mu$ and $\widetilde{\mathcal{O}}^\nu$ are pairwise orthogonal, and the decomposition is asymptotically orthogonal in the sense that
\begin{equation}\label{orthdecomp1}
\begin{split}
\Vert\nabla_\g f_k\Vert_{L^2}^2=\sum_{1\le \mu\le J}\Vert \nabla_\g  \widetilde{\phi}^\mu_{\mathcal{O}^\mu_k}\Vert_{L^2}^2+\sum_{1\le\nu\le J}\Vert\nabla_\g \widetilde{\psi}^\nu_{\widetilde{\mathcal{O}}^\nu_k}\Vert_{L^2}^2+\Vert \nabla_\g r_k\Vert_{L^2}^2+o_k(1)
\end{split}
\end{equation}
where $o_k(1)\to0$ as $k\to\infty$.
\end{lemma}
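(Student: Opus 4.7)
The plan is an iterative profile extraction modeled on Keraani's argument, with the two regimes $N_k = O(1)$ and $N_k \to \infty$ producing hyperbolic and Euclidean profiles respectively. I start with the dichotomy: if $\limsup_{k\to\infty}\sup_{N\ge 1, t, x}N^{-1/2}|P_N e^{it\Delta_\g} f_k|(x)\le\delta$, the conclusion holds with $J=0$ and $r_k=f_k$. Otherwise, after extracting a subsequence, there exist $N_k\ge 1$, $t_k\in\R$, and $x_k=h_k\cdot\mathbf{0}\in\H^3$ with $N_k^{-1/2}|P_{N_k}e^{it_k\Delta_\g} f_k|(x_k)\ge\delta/2$; a further subsequence arranges either $N_k\to N_\infty\in[1,\infty)$ or $N_k\to\infty$, and in the first case Lemma \ref{equiv}(i) allows replacement by an equivalent frame with $N_k\equiv 1$.

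I then extract a profile with $\dot H^1$-mass $\gtrsim\delta$. In the hyperbolic regime, let $g_k:=\Pi_{-t_k,h_k^{-1}}f_k$ and pass to a weak limit $g_k\rightharpoonup\psi^1$ in $H^1(\H^3)$. By \eqref{proj2} the kernel $P_1(d(\mathbf{0},\cdot))$ lies in $L^2(\H^3)$, so evaluation at $\mathbf{0}$ is continuous on $L^2(\H^3)$, yielding $|P_1\psi^1(\mathbf{0})|\ge\delta/2$ and hence (using $\dot H^1(\H^3)\hookrightarrow L^2(\H^3)$) $\|\nabla_\g\psi^1\|_{L^2}\gtrsim\delta$. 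In the Euclidean regime, I rescale to $\R^3$ via $\widetilde g_k(y):=N_k^{-1/2}(\Pi_{-t_k,h_k^{-1}}f_k)(\Psi_I(y/N_k))$; using that $\Psi_I$ is smooth near $\mathbf{0}$, this sequence is bounded in $\dot H^1(\R^3)$, so up to subsequence $\widetilde g_k\rightharpoonup\phi^1$ in $\dot H^1(\R^3)$. The pointwise lower bound at the concentration point, combined with Lemma \ref{lem12}(i) (taking $\epsilon\sim\delta$ to localize the mass in $B(x_k,R_\epsilon N_k^{-1})$), translates after rescaling to $|\widetilde P_1\phi^1(0)|\gtrsim\delta$ for a Schwartz-class Euclidean convolver $\widetilde P_1$ arising as the rescaled $P_{N_k}$-kernel, whence $\|\phi^1\|_{\dot H^1(\R^3)}\gtrsim\delta$.

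Having extracted the first profile $p^1_k := \widetilde\phi^1_{\mathcal{O}^1_k}$ or $\widetilde\psi^1_{\widetilde{\mathcal{O}}^1_k}$, I set $f_k^1 := f_k - p^1_k$; by construction $f_k^1$ is absent from the chosen frame, so $\langle\nabla p^1_k,\nabla f_k^1\rangle_{L^2}\to 0$, yielding via Lemma \ref{equiv}(iii) the Pythagorean identity $\|\nabla f_k\|_{L^2}^2 = \|\nabla p^1_k\|_{L^2}^2 + \|\nabla f_k^1\|_{L^2}^2 + o_k(1)$. The $\dot H^1$-energy is thus reduced by at least $c\delta^2$; reapplying the dichotomy to $f_k^1$ and iterating terminates after $J=O(\sup_k\|f_k\|_{H^1}^2\delta^{-2})$ steps, at which point the remainder obeys \eqref{r_kSmall}, and summing the Pythagorean identities yields \eqref{orthdecomp1}. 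Pairwise orthogonality of the extracted frames is automatic: an equivalence between frames at steps $\mu<\nu$ would, via Lemma \ref{equiv}(i) and the absence of $f_k^{\mu}$ from the $\mu$-th frame, force the $\nu$-th profile to vanish, a contradiction.

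The main obstacle is the Euclidean extraction step: I must show that, under the rescaling $y\mapsto\Psi_I(y/N_k)$, the operator $P_{N_k}$ converges on $\dot H^1$-bounded sequences to a fixed Schwartz-kernel Euclidean operator, so that the pointwise lower bound at the concentration point survives in the weak limit. This requires comparing $(-\Delta_\g)^{1/2}$ with the Euclidean $(-\Delta)^{1/2}$ in the chart $\Psi_I$, uniformly on the compact scales set by the cutoff $\eta(\cdot/N_k^{1/2})$ hidden inside $T_{N_k}$, very much in the spirit of the Euclidean-approximation computations of Section \ref{Eucl}. Once this correspondence and the associated quantitative form of Lemma \ref{lem12}(i) are in place, the iteration, the Pythagorean bookkeeping, and the frame orthogonality go through by the standard template.
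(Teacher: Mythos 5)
Your overall strategy matches the paper's: define the concentration functional via $\sup_{N,t,x}N^{-1/2}|P_N e^{it\Delta_\g}\cdot|$, extract a profile whenever it exceeds $\delta$, split into hyperbolic ($N_k$ bounded) and Euclidean ($N_k\to\infty$) regimes, and iterate with the Pythagorean bookkeeping. The hyperbolic extraction, the frame-orthogonality argument, and the termination count in $O(\delta^{-2})$ steps are all correct and essentially identical to the paper's.

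The genuine gap is the one you flag yourself: you assert but do not prove that the rescaled $P_{N_k}$ converges to a Euclidean convolver in the right sense, which is precisely what is needed to make the Euclidean weak limit $\phi^1$ carry a uniform $\dot H^1$-mass $\gtrsim\delta$ and to verify that $g_k-\widetilde\phi^1_{\mathcal O'_k}$ is absent from the new frame. The paper closes this gap by observing that the natural "test profile" is the hyperbolic heat kernel $N_k^{-5/2}e^{N_k^{-2}\Delta_\g}\delta_{\mathbf 0}$ and using the explicit three-dimensional formula
\[
\left(e^{-z\Delta_\g}\delta_{\bf 0}\right)(P)=\frac{1}{(4\pi z)^{3/2}}\,e^{-z}\,\frac{r}{\sinh r}\,e^{-r^2/(4z)},\qquad r=d_\g(\mathbf 0,P),
\]
to show $\|N_k^{-5/2}e^{N_k^{-2}\Delta_\g}\delta_{\mathbf 0}-T_{N_k}f\|_{H^1(\mathbb H^3)}\to 0$ with $f$ a fixed Euclidean Gaussian. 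This reduces the duality pairing to the Euclidean one (via Lemma \ref{equiv}(iii)), which is the entire content of the extraction. Without this explicit comparison (or an equivalent quantitative statement), the "pointwise lower bound survives the weak limit" claim is not established. A secondary but real issue: your rescaled sequence $\widetilde g_k(y)=N_k^{-1/2}(\Pi_{-t_k,h_k^{-1}}f_k)(\Psi_I(y/N_k))$ need not be uniformly bounded in $\dot H^1(\mathbb R^3)$, since the metric distortion of $\Psi_I$ grows at infinity; the paper inserts cutoffs $\eta(v/R)$ to get $\dot H^1(\mathbb R^3)$-bounds uniform in $R$, extracts weak limits $\phi^R$, and then lets $R\to\infty$. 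You should also use the gradient-paired form of the test function (as the paper does via $\Delta_\g N^{-5/2}e^{N^{-2}\Delta_\g}\delta_{\mathbf 0}$) rather than $L^2$-duality against $P_1(d(\mathbf 0,\cdot))$, so that the lower bound $\|\nabla_\g\psi^1\|_{L^2}\gtrsim\delta$ is uniform in the limit scale $N$.
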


We show first how to prove Proposition \ref{ProfileDec} assuming the finitary decomposition of Lemma \ref{FiniteProfileDec}.

\begin{proof}[Proof of Proposition \ref{ProfileDec}]
We apply Lemma \ref{FiniteProfileDec} repeatedly for $\delta=2^{-l}$, $l=1,2,\dots$ and we obtain the result except for \eqref{AlmostOrtho}. To prove this, it suffices from \eqref{orthdecomp1} to prove the addition of the $L^6$-norms. But from Lemma \ref{lem12} and \eqref{rkJSmall}, we see that
\begin{equation*}
\begin{split}
\limsup_{J\to\infty}\limsup_{k\to\infty}\Vert r^J_k\Vert_{L^6(\mathbb{H}^3)}=0
\end{split}
\end{equation*}
so that
\begin{equation}\label{alex1}
\limsup_{J\to\infty}\limsup_{k\to\infty} \left(\left\vert\Vert f_k\Vert_{L^6}^6- \Vert f_k-r^J_k\Vert_{L^6}^6\right\vert+\Vert r^J_k\Vert_{L^6}^6\right)=0.
\end{equation}
Now, for fixed $J$, we see that
\begin{equation*}
\begin{split}
\left\vert \vert f_k-r^J_k\vert^6-\sum_{1\le\mu\le J}\vert \widetilde{\phi}^\mu_{\mathcal{O}^\mu_k}\vert^6-\sum_{1\le\nu\le J}\vert \widetilde{\psi}^\nu_{\widetilde{\mathcal{O}}^\nu_k}\vert^6\right\vert&\lesssim_J\sum_{1\le\alpha\neq\beta\le J}\vert \widetilde{\phi}^\alpha_{\mathcal{O}^\alpha_k}\vert\vert \widetilde{\phi}^\beta_{\mathcal{O}^\beta_k}\vert^5+\sum_{1\le\alpha\neq\beta\le J}\vert \widetilde{\psi}^\alpha_{\widetilde{\mathcal{O}}^\alpha_k}\vert\vert \widetilde{\psi}^\beta_{\widetilde{\mathcal{O}}^\beta_k}\vert^5\\
&+\sum_{1\le\mu,\nu\le J}\left(\vert \widetilde{\phi}^\mu_{\mathcal{O}^\mu_k}\vert\vert \widetilde{\psi}^\nu_{\widetilde{\mathcal{O}}^\nu_k}\vert^5+\vert\widetilde{\phi}^\mu_{\mathcal{O}^\mu_k}\vert^5\vert \widetilde{\psi}^\nu_{\widetilde{\mathcal{O}}^\nu_k}\vert\right)
\end{split}
\end{equation*}
so that
\begin{equation*}
\begin{split}
\left\vert \Vert f_k-r^J_k\Vert_{L^6}^6-\sum_{1\le\mu\le J}\Vert \widetilde{\phi}^\mu_{\mathcal{O}^\mu_k}\Vert_{L^6}^6-\sum_{1\le\nu\le J}\Vert \widetilde{\psi}^\nu_{\widetilde{\mathcal{O}}^\nu_k}\Vert_{L^6}^6\right\vert
&\lesssim_{J}\sum_{\alpha,\beta}\Vert f^\alpha_kf^\beta_k\Vert_{L^3}
\end{split}
\end{equation*}
where the summation ranges over all pairs $(f^\alpha_k,f^\beta_k)$ of profiles such that $f^\alpha_k\ne f^\beta_k$ and where we have used the fact that the $L^6$ norm of each profile is bounded uniformly. From Lemma \ref{equiv} (ii), we see that this converges to $0$ as $k\to\infty$. The identity \eqref{AlmostOrtho} follows using also \eqref{alex1}.
\end{proof}

\subsection{Proof of Lemma \ref{FiniteProfileDec}}
\label{TechProof}

For $(g_k)_k$ a bounded sequence in $H^1(\mathbb{H}^3)$, we let
\begin{equation}\label{DefOfDelta}
\delta((g_k)_k)=\sup_{\widetilde{\mathcal{O}}_k}\limsup_{k\to\infty}N_k^{-\frac{1}{2}}\left\vert P_{N_k}\left(e^{it_k\Delta_\g}g_k\right)\right\vert(h_k\cdot\bf{0})
\end{equation}
where the supremum is taken over all sequences $\widetilde{\mathcal{O}}_k=(N_k,t_k,h_k)_k$ with $N_k\ge 1$, $t_k\in\mathbb{R}$ and $h_k\in\mathbb{G}$. If $\delta((f_k)_k)\le\delta$, then we let $J=0$ and $f_k=r_k$ and Lemma \ref{FiniteProfileDec} follows. Otherwise, we use inductively the following:

\medskip

{\bf{Claim:}} Assume $(g_k)_k$ is a bounded sequence in $H^1(\mathbb{H}^3)$ which is absent from a family of frames $(\mathcal{O}^\alpha)_{\alpha\le A}$ and such that $\delta((g_k)_k)\ge\delta$. Then there exists a new frame $\mathcal{O}^\prime$ which is orthogonal to $\mathcal{O}^\alpha$ for all $\alpha\le A$ and a profile $\widetilde{\phi}_{\mathcal{O}^\prime_k}$ of free energy
\begin{equation}\label{NontrivialWL}
\limsup_{k\to\infty}\Vert\nabla_\g \widetilde{\phi}_{\mathcal{O}^\prime_k}\Vert_{L^2}\gtrsim\delta
\end{equation}
such that, after passing to a subsequence, $g_k-\widetilde{\phi}_{\mathcal{O}^\prime_k}$ is absent from the frames $\mathcal{O}^\prime$ and $\mathcal{O}^\alpha$, $\alpha\le A$.

\medskip

Once we have proved the claim, Lemma \ref{FiniteProfileDec} follows by applying repeatedly the above procedure. Indeed, we let $(f^\alpha_k)_k$ be defined as follows: $(f^0_k)_k=(f_k)_k$ and if $\delta((f^\alpha_k)_k)\ge\delta$, then apply the above claim to $(f^\alpha_k)_k$ to get a new sequence $$f^{\alpha+1}_k=f^{\alpha}_k-\widetilde{\phi}_{\mathcal{O}^{\alpha+1}_k}.$$ By induction, $(f^\alpha_k)_k$ is absent from all the frames $\mathcal{O}^\beta$, $\beta\le\alpha$. This procedure stops after a finite number ($O(\delta^{-2})$) of steps. Indeed since
$f^{\alpha}_k=f^{\alpha-1}_k-\widetilde{\phi}_{\mathcal{O}^\alpha_k}$ is absent from $\mathcal{O}^\alpha_k$, we get from \eqref{WeakConvTo0} that
\begin{equation*}
\begin{split}
\Vert\nabla_\g f^{\alpha-1}_k\Vert_{L^2}^2&=\Vert\nabla_\g f^{\alpha}_k\Vert_{L^2}^2+\Vert\nabla_\g \widetilde{\phi}_{\mathcal{O}^\alpha_k}\Vert_{L^2}^2+2\langle f^{\alpha}_k,\widetilde{\phi}_{\mathcal{O}^\alpha_k}\rangle_{H^1\times H^1(\mathbb{H}^3)}\\
&=\Vert\nabla_\g f^{\alpha}_k\Vert_{L^2}^2+\Vert\nabla_\g \widetilde{\phi}_{\mathcal{O}^\alpha_k}\Vert_{L^2}^2+o_k(1)
\end{split}
\end{equation*}
and therefore by induction,
\begin{equation*}
\Vert \nabla_\g f_k\Vert_{L^2}^2=\sum_{1\le\alpha\le A}\Vert\nabla_\g \widetilde{\phi}_{\mathcal{O}^\alpha}\Vert_{L^2}^2+\Vert \nabla_\g f_k^A\Vert_{L^2}^2 +o_k(1).
\end{equation*}
Since each profile has a free energy $\gtrsim\delta$, this is a finite process and Lemma \ref{FiniteProfileDec} follows.

\medskip

Now we prove the claim. By hypothesis, there exists a sequence $\widetilde{\mathcal{O}}_k=(N_k,t_k,h_k)_k$ such that the $\limsup_{k\to\infty}$ in \eqref{DefOfDelta} is greater than $\delta/2$. If $\limsup_{k\to\infty}N_k=\infty$, then, up to passing to a subsequence, we may assume that $\{\widetilde{\mathcal{O}}_k\}_{k\ge 1}=\mathcal{O}^\prime$ is a Euclidean frame.
Otherwise, up to passing to a subsequence, we may assume that $N_k\to N\ge 1$ and we let $\mathcal{O}^\prime=\{(1,t_k,h_k)_k\}_{k\ge 1}$ be a hyperbolic frame.
In all cases, we get a
frame $\mathcal{O}^\prime=\{(M_k,t_k,h_k)_k\}_{k\ge 1}$ such that
\begin{equation}\label{NonzeroScalarProduct}
\begin{split}
\delta/2&\le\limsup_{k\to\infty}N_k^{-\frac{1}{2}}\left\vert P_{N_k}\left(e^{it_k\Delta_\g}\right)g_k\right\vert(h_k\cdot\bf{0})\\
&=\limsup_{k\to\infty}\langle \Pi_{-t_k,h_k^{-1}}g_k,N_k^{-\frac{1}{2}} P_{N_k}(\delta_{\bf{0}})\rangle_{L^2\times L^2(\mathbb{H}^3)}
\end{split}
\end{equation}
for some sequence $N_k$ comparable to $M_k$.

Now, we claim that there exists a profile $\widetilde{f}_{\mathcal{O}_k^\prime}$ associated to the frame $\mathcal{O}^\prime$ such that
\begin{equation*}
\limsup_{k\to\infty}
\Vert \nabla_\g \widetilde{f}_{\mathcal{O}_k^\prime}\Vert_{L^2}\lesssim 1
\end{equation*}
and
\begin{equation*}
\Pi_{-t_k,h_k^{-1}}\widetilde{f}_{\mathcal{O}_k^\prime}-N_k^{-\frac{5}{2}}e^{-N_k^{-2}\Delta_\g}(\delta_{\bf{0}})\to0
\end{equation*}
strongly in $H^1(\mathbb{H}^3)$. Indeed, if $\mathcal{O}^\prime$ is a hyperbolic frame, then
$f:=N^{-\frac{5}{2}} e^{-N^{-2}\Delta_\g}\delta_{\bf{0}}$. If $N_k\to\infty$, we let $f(x):=(4\pi)^{-\frac{3}{2}}e^{-\vert x\vert^2/4}=e^{-\Delta}\delta_0$. By unitarity of $\Pi$ it suffices to see that
\begin{equation}\label{ConvHK}
\Vert N_k^{-\frac{5}{2}} e^{N_k^{-2}\Delta_\g}\delta_{\bf{0}}-T_{N_k}f\Vert_{H^1(\mathbb{H}^3)}\to 0
\end{equation}
which follows by inspection of the explicit formula
\begin{equation*}
\left(e^{-z\Delta_\g}\delta_{\bf{0}}\right)(P)=\frac{1}{(4\pi z)^\frac{3}{2}}e^{-z}\frac{r}{\sinh r}e^{-\frac{r^2}{4z}}
\end{equation*}
for $r=d_\g({\bf{0}},P)$.

\medskip

Since $g_k$ is absent from the frames $\mathcal{O}^\alpha$, $\alpha\le A$, and we have a nonzero scalar product in \eqref{NonzeroScalarProduct}, we see from the discussion after Definition \ref{DefAbsent} that $\mathcal{O}^\prime$ is orthogonal to these frames.

\medskip

Now, in the case $\mathcal{O}^\prime$ is a hyperbolic frame, we let $\psi\in H^1(\mathbb{H}^3)$ be any weak limit of $\Pi_{-t_k,h_k^{-1}}g_k$. Then, passing to a subsequence, we may assume that for any $\varphi\in H^1(\mathbb{H}^3)$,
\begin{equation*}
\langle \nabla_\g\left(\Pi_{-t_k,h_k^{-1}}g_k-\psi\right),\nabla_\g\varphi\rangle_{L^2\times L^2}=\langle \nabla_\g\left(g_k-\Pi_{t_k,h_k}\psi\right),\nabla_\g\Pi_{t_k,h_k}\varphi\rangle_{L^2\times L^2}\to 0
\end{equation*}
so that $g^\prime_k=g_k-\Pi_{t_k,h_k}\psi$ is absent from $\mathcal{O}^\prime$. In particular,
we see from \eqref{NonzeroScalarProduct} that
\begin{equation*}
\begin{split}
\delta/2&\le \limsup_{k\to\infty}\langle \Pi_{-t_k,h_k^{-1}}g_k,\Delta_\g N^{-\frac{5}{2}} \left(e^{N^{-2}\Delta_\g}\delta_{\bf{0}}\right)\rangle_{L^2\times L^2}\\
&\le\langle\psi,\Delta_\g N^{-\frac{5}{2}} \left(e^{N^{-2}\Delta_\g}\delta_{\bf{0}}\right)\rangle_{L^2\times L^2}\lesssim \Vert\nabla_\g\psi\Vert_{L^2(\mathbb{H}^3)}
\end{split}
\end{equation*}
so that \eqref{NontrivialWL} holds. Finally, to prove that $g^\prime_k$ is also absent from the frames $\mathcal{O}^\alpha$, $1\le\alpha\le A$ it suffices by hypothesis to prove this for $\widetilde{\psi}_{\mathcal{O}^\prime_k}$, but this follows from Lemma \ref{equiv} (ii). 

\medskip

In the case $N_k\to\infty$, we first choose $R>0$ and we define
\begin{equation}\label{PhiR}
\phi^R_k(v)=\eta(v/R)N_k^{-\frac{1}{2}}\left(\Pi_{-t_k,h_k^{-1}}g_k\right)(\Psi_I(v/N_k)),
\end{equation}
where $\eta$ is a smooth cut-off function as in \eqref{rescaled}. This sequence satisfies
\begin{equation*}
\limsup_{k\to\infty}\Vert\nabla\phi^R_k\Vert_{L^2(\mathbb{R}^3)}\lesssim \limsup_{k\to\infty}\Vert\nabla_\g g_k\Vert_{L^2(\mathbb{H}^3)}
\end{equation*}
and therefore has a subsequence which is bounded in $\dot{H}^1(\mathbb{R}^3)$ uniformly in $R>0$. Passing to a subsequence, we can find a weak limit $\phi^R\in \dot{H}^1(\mathbb{R}^3)$. Since the bound is uniform in $R>0$, we can let $R\to\infty$ and find a weak limit $\phi$ such that
\begin{equation*}
\phi^R\rightharpoonup \phi
\end{equation*}
in $H^1_{loc}$ and $\phi \in\dot{H}^1(\mathbb{R}^3)$. Now, for $\varphi\in C^\infty_c(\mathbb{R}^3)$, we have that
\begin{equation*}
\Vert T_{N_k}\varphi-N_k^\frac{1}{2}\varphi(N_k\Psi_I^{-1})\Vert_{H^1(\mathbb{H}^3)}\to 0
\end{equation*}
as $k\to\infty$
and with Lemma \ref{equiv} (iii), we compute that
\begin{equation}\label{LastClaimProfileDec}
\begin{split}
\langle g_k,\Delta_\g\widetilde{\varphi}_{\mathcal{O}^\prime_k}\rangle_{L^2\times L^2(\mathbb{H}^3)}&=\langle \Pi_{-t_k,h_k^{-1}}g_k,\Delta_\g T_{N_k}\varphi\rangle_{L^2\times L^2(\mathbb{H}^3)}\\
&=\langle \Pi_{-t_k,h_k^{-1}}g_k,\Delta_\g N_k^\frac{1}{2}\varphi(N_k\Psi_I^{-1}\cdot)\rangle_{L^2\times L^2(\mathbb{H}^3)}+o_k(1)\\
&=\langle\phi,\Delta\varphi\rangle_{L^2\times L^2(\mathbb{R}^3)}+o_k(1)\\
&=-\langle\widetilde{\phi}_{\mathcal{O}^\prime_k},\widetilde{\varphi}_{\mathcal{O}^\prime_k}\rangle_{H^1\times H^1(\mathbb{H}^3)}+o_k(1).
\end{split}
\end{equation}
In particular, $g^\prime_k=g_k-\widetilde{\phi}_{\mathcal{O}^\prime_k}$ is absent from $\mathcal{O}^\prime$ and from \eqref{NonzeroScalarProduct}, we see that \eqref{NontrivialWL} holds. Finally, from Lemma \ref{equiv} (ii) again, $g^\prime_k$ is absent from all the previous frames.

This finishes the proof of the claim and hence the proof of the finitary statement.

\section{Proof of Proposition \ref{lem4}}\label{proofnew}

In this section, we first give the proof of Proposition \ref{lem4} assuming a few lemmas that we prove at the end.

\subsection{Proof of Proposition \ref{lem4}}

Using the time translation symmetry, we may assume that $t_k=0$ for all $k\ge 1$. We apply Proposition \ref{ProfileDec} to the sequence $(u_k(0))_k$ which is bounded in $H^1(\mathbb{H}^3)$ and we get sequences of pairs $(\phi^\mu,\mathcal{O}^\mu)\in\dot{H}^1(\mathbb{R}^3)\times\mathcal{F}_e$ and $(\psi^\nu,\widetilde{\mathcal{O}}^\nu)\in H^1(\mathbb{H}^3)\times\mathcal{F}_h$, $\mu,\nu=1,2,\ldots$, such that the conclusion of Proposition \ref{ProfileDec} holds. Up to using Lemma \ref{equiv} (i), we may assume that for all $\mu$, either $t^\mu_k=0$ for all $k$ or $ (N_k^\mu)^2\vert t^\mu_k\vert\to\infty$ and similarly, for all $\nu$, either $t^\nu_k=0$ for all $k$ or $\vert t^\nu_k\vert\to\infty$.

\medskip

{\bf Case I}: all profiles are trivial, $\phi^\mu=0$, $\psi^\nu=0$ for all $\mu,\nu$. In this case, we get from Strichartz estimates, \eqref{rkJSmall} and Lemma \ref{lem12} (ii)
that $u_k(0)=r^J_k$ satisfies
\begin{equation*}
\begin{split}
\Vert e^{it\Delta_\g}(u_k(0))\Vert_{Z(\mathbb{R})}
&\lesssim \Vert e^{it\Delta_\g}(u_k(0))\Vert_{L^6_tL^{18}_x}^\frac{3}{5}\Vert e^{it\Delta_\g}(u_k(0))\Vert_{L^\infty_tL^6_x}^\frac{2}{5}\\
&\lesssim \Vert \nabla u_k(0)\Vert_{L^2}^\frac{11}{15}\left(\sup_{N\ge 1,t,x}N^{-\frac{1}{2}}\vert e^{it\Delta_\g}P_N(u_k(0))\vert(x)\right)^\frac{4}{15}\to 0
\end{split}
\end{equation*}
as $k\to\infty$. Appling Lemma \ref{SmallDataScattering}, we see that
\begin{equation*}
\Vert u_k\Vert_{Z(\mathbb{R})}\le\Vert e^{it\Delta_\g}u_k(0)\Vert_{L^{10}_{t,x}(\mathbb{H}^3\times\mathbb{R})}+\Vert u_k-e^{it\Delta_\g}u_k(0)\Vert_{S^1(\mathbb{R})}\to0
\end{equation*}
as $k\to\infty$ which contradicts \eqref{CondForCompactnessBigSNorm}.

\medskip

Now, for every linear profile $\widetilde{\phi}^\mu_{\mathcal{O}^\mu_k}$ (resp. $\psi^\nu_{\widetilde{\mathcal{O}}^\nu_k}$), define the associated nonlinear profile $U_{e,k}^\mu$ (resp. $U_{h,k}^\nu$) as the maximal solution of \eqref{eq1} with initial data $U^\mu_{e,k}(0)=\widetilde{\phi}^\mu_{\mathcal{O}^\mu_k}$ (resp. $U_{h,k}^\nu(0)=\psi^\nu_{\widetilde{\mathcal{O}}^\nu_k}$). We may write $U^\gamma_k$ if we do not want to discriminate between Euclidean and hyperbolic profiles.

\medskip

We can give a more precise description of each nonlinear profile.
\begin{enumerate}
\item If $\mathcal{O}^\mu\in\mathcal{F}_e$ is a Euclidean frame, this is given in Lemma \ref{GEForEP}.
\item If $t^\nu_k=0$, letting $(I^\nu,W^\nu)$ be the maximal solution of \eqref{eq1} with initial data $W^\nu(0)=\psi^{\nu}$, we see that for any interval $J\subset\subset I^\nu$,
\begin{equation}\label{DescHypProfile1}
\Vert U^\nu_{h,k}(t)-\pi_{h_k^\nu}W^\nu(t-t_k^\nu)\Vert_{S^1(J)}\to 0
\end{equation}
as $k\to\infty$ (indeed, this is identically $0$ in this case).
\item If $t^\nu_k\to+\infty$, then we define $(I^\nu,W^\nu)$ to be the maximal solution of \eqref{eq1} satisfying\footnote{Note that $(I^\nu,W^\nu)$ exists by Strichartz estimates and Lemma \ref{SmallDataScattering}.}
\begin{equation*}
\Vert W^\nu(t)-e^{it\Delta_\g}\psi^\nu\Vert_{H^1(\mathbb{H}^3)}\to 0
\end{equation*}
as $t\to-\infty$. Then, applying Proposition \ref{stability}, we see that on any interval $J=(-\infty,T)\subset\subset I^\nu$, we have \eqref{DescHypProfile1}. Using the time reversal symmetry $u(t,x)\to \overline{u}(-t,x)$, we obtain a similar description when $t^\nu_k\to-\infty$.
\end{enumerate}

\medskip

{\bf Case IIa}: there is only one Euclidean profile, i.e. there exists $\mu$ such that $u_k(0)=\widetilde{\phi}^\mu_{\mathcal{O}^\mu_k}+o_k(1)$ in $H^1(\mathbb{H}^3)$. Applying Lemma \ref{GEForEP}, we see that
$U^\mu_{e,k}$ is global with uniformly bounded $S^1$-norm for $k$ large enough. Then, using the stability Proposition \ref{stability} with $\tilde{u}=U^\mu_{e,k}$, we see that for all $k$ large enough,
\begin{equation*}
\Vert u_k\Vert_{Z(I)}\lesssim_{E_{max}}1
\end{equation*}
which contradicts \eqref{CondForCompactnessBigSNorm}.

\medskip

{\bf Case IIb}: there is only one hyperbolic profile, i.e. there is $\nu$ such that $u_k(0)=\widetilde{\psi}^\nu_{\widetilde{\mathcal{O}}^\nu_k}+o_k(1)$ in $H^1(\mathbb{H}^3)$. If we have that $t^\nu_k\to+\infty$, then, using Strichartz estimates, we see that
\begin{equation*}
\Vert \nabla_\g e^{it\Delta_\g}\Pi_{t^\nu_k,h_k^\nu}\psi^\nu\Vert_{L^{10}_tL^\frac{30}{13}_x(\mathbb{H}^3\times(-\infty,0))}=\Vert\nabla_\g e^{it\Delta_\g}\psi^\nu\Vert_{L^{10}_tL^\frac{30}{13}_x(\mathbb{H}^3\times(-\infty,-t^\nu_k))}\to0
\end{equation*}
as $k\to\infty$, which implies that $\Vert e^{it\Delta_\g}u_k(0)\Vert_{Z(-\infty,0)}\to 0$ as $k\to\infty$. Using again Lemma \ref{SmallDataScattering}, we see that, for $k$ large enough, $u_k$ is defined on $(-\infty,0)$ and $\Vert u_k\Vert_{Z(-\infty,0)}\to 0$ as $k\to\infty$, which contradicts \eqref{CondForCompactnessBigSNorm}. Similarly, $t^\nu_k\to-\infty$ yields a contradiction. Finally, if $t^\nu_k=0$, we get that
\begin{equation*}
\pi_{(h_k^\nu)^{-1}}u_k(0)\to \psi^\nu
\end{equation*}
converges strongly in $H^1(\mathbb{H}^3)$, which is the desired conclusion of the proposition.

\medskip

{\bf Case III}: there exists $\mu$ or $\nu$ and $\eta>0$ such that
\begin{equation}\label{OneNonTrivialNonFullProfile}
2\eta< \limsup_{k\to\infty}E^1(\widetilde{\phi}^\mu_{\mathcal{O}^\mu_k}), \limsup_{k\to\infty}E^1(\widetilde{\psi}^\nu_{\widetilde{\mathcal{O}}^\nu_k})<E_{max}-2\eta.
\end{equation}
Taking $k$ sufficiently large and maybe replacing $\eta$ by $\eta/2$, we may assume that \eqref{OneNonTrivialNonFullProfile} holds for all $k$.
In this case, we claim that for $J$ sufficiently large,
\begin{equation*}
\begin{split}
U^{app}_k&=\sum_{1\le\mu\le J}U^\mu_{e,k}+\sum_{1\le\nu\le J}U^\nu_{h,k}+e^{it\Delta_\g}r_k^J\\
&=U^J_{prof,k}+e^{it\Delta_\g}r_k^J
\end{split}
\end{equation*}
is a global approximate solution with bounded $Z$ norm for all $k$ sufficiently large.

\medskip

First, by Lemma \ref{GEForEP}, all the Euclidean profiles are global. Using \eqref{AlmostOrtho}, we see that for all $\nu$ and all $k$ sufficiently large, $E^1(U^{\nu}_{h,k})<E_{max}-\eta$. By \eqref{DescHypProfile1}, this implies that $E^1(W^\nu)<E_{max}-\eta$ so that by the definition of $E_{max}$, $W^\nu$ is global and by Proposition \ref{stability}, $U^\nu_{h,k}$ is global for $k$ large enough and
\begin{equation}\label{GDForHypP}
\Vert U^\nu_{h,k}(t)-\pi_{h_k}W^\nu(t-t_k^\nu)\Vert_{S^1(\mathbb{R})}\to 0
\end{equation}
as $k\to\infty$.
\medskip

Now we claim that
\begin{equation}\label{BoundedENorm}
\limsup_{k\to\infty}
\Vert\nabla_\g U^{app}_k\Vert_{L^\infty_tL^2_x}\le 4E_{max}^\frac{1}{2}
\end{equation}
is bounded uniformly in $J$.
Indeed, we first observe using \eqref{AlmostOrtho} that
\begin{equation*}
\begin{split}
\Vert \nabla_\g U^{app}_k\Vert_{L^\infty_tL^2_x}&\le \Vert \nabla_\g U^J_{prof,k}\Vert_{L^\infty_tL^2_x}+\Vert \nabla_\g r^J_k\Vert_{L^2_x}\\
&\le \Vert \nabla_\g U^J_{prof,k}\Vert_{L^\infty_tL^2_x}+\left(2E_{max}\right)^\frac{1}{2}.
\end{split}
\end{equation*}
Using Lemma \ref{Assump}, we get that for fixed $t$ and $J$,
\begin{equation*}
\begin{split}
\Vert \nabla_\g U^J_{prof,k}(t)\Vert_{L^2_x}^2
&\le  \sum_{1\le\gamma\le 2J}
\Vert \nabla_\g U^\gamma_k\Vert_{L^\infty_tL^2_x}^2+2\sum_{\gamma\ne\gamma^\prime}\langle\nabla_\g U^\gamma_k(t),\nabla_\g U^{\gamma^\prime}_k(t)\rangle_{L^2\times L^2}\\
&\le  2\sum_{1\le\gamma\le 2J}E^1(U^\gamma_k)+o_k(1)\le 2E_{max}+o_k(1),
\end{split}
\end{equation*}
where $o_k(1)\to0$ as $k\to\infty$ for fixed $J$.

\medskip

We also have that
\begin{equation}\label{UHasUniformlyBoundedSNorm}
\limsup_{k\to\infty} 
\Vert \nabla_\g U^{app}_k\Vert_{L^{10}_tL^\frac{30}{13}_x}\lesssim_{E_{max},\eta}1
\end{equation}
is bounded uniformly in $J$. Indeed, from \eqref{OneNonTrivialNonFullProfile} and \eqref{AlmostOrtho}, we see that for all $\gamma$ and all $k$ sufficiently large (depending maybe on $J$), $E^1(U^\gamma_k)<E_{max}-\eta$ and from the definition of $E_{max}$, we conclude that
$$\sup_\gamma\Vert U^\gamma_k\Vert_{Z(\mathbb{R})}\lesssim_{E_{max},\eta} 1.$$
Using Proposition \ref{stability}, we see that this implies that
\begin{equation*}
\sup_{\gamma}
\Vert \nabla_\g U^\gamma_k\Vert_{L^\frac{10}{3}_{t,x}}\lesssim_{E_{max},\eta} 1.
\end{equation*}
Besides, using Lemma \ref{SmallDataScattering}, we obtain that
\begin{equation*}
\Vert \nabla_\g U^\gamma_k\Vert_{L^\frac{10}{3}_{t,x}}^2\lesssim E^1(U^\gamma_k)
\end{equation*}
if $E^1(U^\gamma_k)\le\delta_0$ is sufficiently small. Hence there exists a constant $C=C(E_{max},\eta)$ such that, for all $\gamma$, and all $k$ large enough (depending on $\gamma$),
\begin{equation}\label{SubLinear}
\begin{split}
\Vert \nabla_\g U^\gamma_k\Vert_{L^\frac{10}{3}_{t,x}}^2&\le CE^1(U^\gamma_k)\lesssim_{E_{max},\eta}1\\
\Vert U^\gamma_k\Vert_{L^{10}_{t,x}}^2\lesssim\Vert \nabla_\g U^\gamma_k\Vert_{L^{10}_tL^\frac{30}{13}_x}^2&\le CE^1(U^\gamma_k)\lesssim_{E_{max},\eta}1,
\end{split}
\end{equation}
the second inequality following from H\"older's inequality between the first and the trivial bound $\Vert\nabla_\g U^\gamma_k\Vert_{L^\infty_tL^2_x}\le 2E^1(U^\gamma_k)$.
Now, using \eqref{SubLinear} and Lemma \ref{Assump}, we see that
\begin{equation*}
\begin{split}
\Big\vert\Vert\nabla_\g U^J_{prof,k}\Vert_{L^\frac{10}{3}_{t,x}}^\frac{10}{3}-\sum_{1\le\alpha\le 2J}\Vert\nabla_\g U^\alpha_k&\Vert_{L^\frac{10}{3}_{t,x}}^\frac{10}{3}\Big\vert\le\sum_{1\le\alpha\ne\beta\le 2J}\Vert(\nabla_\g U^\alpha_k)^\frac{7}{3}\nabla_\g U^\beta_k\Vert_{L^1_{t,x}}\\
&\lesssim_{E_{max},\eta} \sum_{1\le \alpha\ne\beta\le 2J}\Vert (\nabla_\g U^\alpha_k)\nabla_\g U^\beta_k\Vert_{L^\frac{5}{3}_{t,x}}\lesssim_{E_{max},\eta} o_k(1).
\end{split}
\end{equation*}
Consequently,
\begin{equation*}
\begin{split}
\Vert\nabla_\g U^J_{prof,k}\Vert_{L^\frac{10}{3}_{t,x}}^\frac{10}{3}
&\le \sum_{1\le\alpha\le 2J}\Vert \nabla_\g U^\alpha_k\Vert_{L^\frac{10}{3}_{t,x}}^\frac{10}{3}+o_k(1)\\
&\lesssim_{E_{max},\eta} C\sum_{1\le\alpha\le 2J}E^1(U^\alpha_k)+o_k(1)\lesssim_{E_{max},\eta}1
\end{split}
\end{equation*}
and using H\"older's inequality and \eqref{BoundedENorm}, we get \eqref{UHasUniformlyBoundedSNorm}.

Using \eqref{BoundedENorm} and \eqref{UHasUniformlyBoundedSNorm} we can apply Proposition \ref{stability} to get $\delta>0$ such that the conclusion of Proposition \ref{stability} holds.

Now, for $F(x)=\vert x\vert^4x$, we have that
\begin{equation*}
\begin{split}
e=\left(i\partial_t+\Delta_\g\right)U^{app}_k-U^{app}_k\vert U^{app}_k\vert^4
&=\sum_{1\le\alpha\le 2J}\left(\left(i\partial_t+\Delta_\g\right)U^\alpha_k-F(U^\alpha_k)\right)\\
&+\sum_{1\le\alpha\le 2J}F(U^\alpha_k)-F(U^{app}_k).
\end{split}
\end{equation*}
The first term is identically $0$, while using Lemma \ref{ControlOfe}, we see that taking $J$ large enough, we can ensure that the second is smaller than $\delta$ given above in $L^2_tH^{1,\frac{6}{5}}_x$-norm for all $k$ large enough. Then, since $u_k(0)=U^{app}_k(0)$, Sobolev's inequality and the conclusion of Proposition \ref{stability} imply that for all $k$ large, and all interval $J$
\begin{equation*}
\Vert u_k\Vert_{Z(J)}\lesssim\Vert u_k\Vert_{S^1(J)}\le \Vert u_k-U^{app}_k\Vert_{S^1(J)}+\Vert U^{app}_k\Vert_{S^1(\mathbb{R})}\lesssim_{E_{max},\eta} 1
\end{equation*}
where we have used \eqref{UHasUniformlyBoundedSNorm}. Then, we see that $u_k$ is global for all $k$ large enough and that $u_k$ has uniformly bounded $Z$-norm, which contradicts \eqref{CondForCompactnessBigSNorm}. This ends the proof.

\subsection{Criterion for linear evolution}

\begin{lemma}\label{SmallDataScattering}
For any $M>0$,
there exists $\delta>0$ such that for any interval $J\subset\mathbb{R}$, if
\begin{equation*}
\begin{split}
\Vert \nabla_\g\phi\Vert_{L^2(\mathbb{H}^3)}&\le M\\
\Vert e^{it\Delta_\g}\phi\Vert_{Z(J)}&\le\delta,
\end{split}
\end{equation*}
then for any $t_0\in J$, the maximal solution $(I,u)$ of \eqref{eq1} satisfying $u(t_0)=e^{it_0\Delta_\g}\phi$ satisfies $J\subset I$ and
\begin{equation}\label{BoundOnuSmallData}
\begin{split}
\Vert u-e^{it\Delta_\g}\phi\Vert_{S^1(J)}&\le \delta^3\\
\Vert u\Vert_{S^1(J)}&\le C(M,\delta).
\end{split}
\end{equation}
Besides, if $J=(-\infty,T)$, then there exists a unique maximal solution $(I,u)$, $J\subset I$ of \eqref{eq1}
such that
\begin{equation}\label{ScatHyp}
\lim_{t\to-\infty}\Vert \nabla_\g\left(u(t)-e^{it\Delta_\g}\phi\right)\Vert_{L^2(\mathbb{H}^3)}=0
\end{equation}
and \eqref{BoundOnuSmallData} holds in this case too. The same statement holds in the Euclidean case when $(\mathbb{H}^3,\g)$ is replaced by $(\mathbb{R}^3,\delta_{ij})$.
\end{lemma}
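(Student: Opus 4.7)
My plan is to run the standard Banach contraction mapping argument in the Strichartz space, based on a single nonlinear estimate. The key nonlinear estimate is
\begin{equation*}
\|v|v|^4\|_{N^1(J)}\lesssim \|v\|_{Z(J)}^4\|v\|_{S^1(J)},
\end{equation*}
which I would derive as follows. Using the equivalence \eqref{sobiden} to replace $(-\Delta_\g)^{1/2}$ by $\nabla_\g$ at the $L^{6/5}$ level and the ordinary Leibniz rule, the $N^1$ norm is controlled in $L^2_tL^{6/5}_x$ by $\|v\|_{L^{10}_{t,x}}^4\|\nabla_\g v\|_{L^{10}_tL^{30/13}_x}$ via Hölder. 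The pair $(10,30/13)$ is admissible in dimension $3$, so interpolation between the two endpoints defining $\|\cdot\|_{S^1}$ in \eqref{mainnorms} gives $\|\nabla_\g v\|_{L^{10}_tL^{30/13}_x}\lesssim \|v\|_{S^1(J)}$, which completes the estimate. I will likewise need the embedding $\|v\|_{Z(J)}\lesssim \|v\|_{S^1(J)}$, which follows from Sobolev \eqref{Sobemb} with $(p,s,q)=(30/13,1,10)$ applied at each time, followed by $L^{10}_t$ in time.

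With that estimate in hand, I consider the Duhamel map
\begin{equation*}
\Phi(v)(t)=e^{it\Delta_\g}\phi-i\int_{t_0}^t e^{i(t-s)\Delta_\g}\bigl(v|v|^4\bigr)(s)\,ds
\end{equation*}
on the complete metric space $X=\{v\in C(J:H^1):\|v-e^{it\Delta_\g}\phi\|_{S^1(J)}\le\delta^3\}$ with the $S^1(J)$ distance. On $\H^3$ the Poincaré-type bound $\|f\|_{L^2}\lesssim\|\nabla_\g f\|_{L^2}$ gives $\|\phi\|_{H^1}\lesssim M$, so the linear Strichartz estimate \eqref{Strichartz1} yields $\|e^{it\Delta_\g}\phi\|_{S^1(J)}\lesssim M$, and hence every $v\in X$ satisfies $\|v\|_{S^1(J)}\le C(M+\delta^3)$ and $\|v\|_{Z(J)}\le\delta+C\delta^3$. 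Applying the inhomogeneous Strichartz estimate \eqref{Strichartz2} together with the nonlinear bound above gives
\begin{equation*}
\|\Phi(v)-e^{it\Delta_\g}\phi\|_{S^1(J)}\lesssim \|v\|_{Z(J)}^4\|v\|_{S^1(J)}\le C_1 M\delta^4,
\end{equation*}
which is at most $\delta^3$ once $\delta=\delta(M)$ is chosen small enough. The same computation with $v|v|^4-w|w|^4$ (which is pointwise bounded by $C(|v|^4+|w|^4)|v-w|$) shows $\Phi$ is a contraction on $X$. Its unique fixed point $u$ is the desired solution and satisfies \eqref{BoundOnuSmallData}; that this $u$ coincides with the maximal solution of Proposition \ref{localwp} is immediate from the uniqueness statement there.

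For the scattering case $J=(-\infty,T)$, I would replace the initial time in the Duhamel formula by $-\infty$, setting up the fixed-point equation
\begin{equation*}
u(t)=e^{it\Delta_\g}\phi-i\int_{-\infty}^t e^{i(t-s)\Delta_\g}\bigl(u|u|^4\bigr)(s)\,ds
\end{equation*}
on the same space $X$; the contraction argument is identical because the Strichartz estimates hold on $(-\infty,t)$ with time-independent constants. The scattering statement \eqref{ScatHyp} then follows because $u(t)-e^{it\Delta_\g}\phi$ equals the Duhamel integral, whose $S^1((-\infty,t_*))$-norm is controlled by $\|u\|_{Z((-\infty,t_*))}^4\|u\|_{S^1(J)}\to 0$ as $t_*\to-\infty$ (using dominated convergence since $u\in L^{10}_{t,x}$). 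The Euclidean statement is proved by exactly the same argument: the Strichartz estimates, the Sobolev embedding $\dot W^{30/13,1}(\R^3)\hookrightarrow L^{10}(\R^3)$, and the Leibniz rule all hold in $\R^3$, so the scheme transfers verbatim with $H^1$ replaced by $\dot H^1$.

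I do not anticipate a serious obstacle here, as the argument is entirely parallel to the Euclidean proof carried out in \cite[Section 3]{CKSTTcrit}. The one point that requires the slightest care, compared to the flat case, is the nonlinear $N^1$ estimate: on a manifold one cannot rely on the fractional Leibniz rule directly for $(-\Delta_\g)^{1/2}$, but this is bypassed by passing to the equivalent $L^p$ gradient norm via \eqref{sobiden} and then applying the ordinary Leibniz rule, as sketched above.
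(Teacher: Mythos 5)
Your argument is correct, and it takes a genuinely different (though closely related) route from the paper's. For the first part, the paper simply applies the already-established stability result (Proposition \ref{stability}) with $\widetilde u = e^{it\Delta_\g}\phi$ and $\rho=1$, treating $-\widetilde u|\widetilde u|^4$ as the error term $e$ and checking $\|e\|_{N^1(J)}\lesssim M\delta^4$ via exactly the same Hölder/Sobolev estimate you derive (H\"older putting $|v|^4$ in $L^{10/4}_{t,x}$ and $\nabla_\g v$ in $L^{10}_tL^{30/13}_x$). You instead rerun the contraction-mapping argument from scratch in the ball $X$, which is the standard proof underlying Propositions \ref{localwp} and \ref{stability} to begin with. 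Both are valid; the paper's version is shorter because it leverages machinery already in place, while yours is self-contained and has the minor advantage of making the fixed-point structure explicit, which you then reuse directly (as the paper also does, tacitly) to treat the scattering case $J=(-\infty,T)$ by replacing the lower limit $t_0$ in Duhamel by $-\infty$. The one place where you should be slightly more careful than your sketch indicates is the contraction estimate: differentiating $v|v|^4-w|w|^4$ produces not only $|v|^4\nabla_\g(v-w)$ but also terms like $(|v|^3+|w|^3)|v-w|\,\nabla_\g w$, and in the latter the factor $\nabla_\g w$ contributes $O(M)$ rather than $O(\delta)$; the resulting contraction constant is $O(\delta^4+M\delta^3)$, which is still $<1$ once $\delta$ is chosen small depending on $M$, consistent with the statement, but not purely a power of $\delta$. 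The nonlinear estimate itself, the admissibility check for $(10,30/13)$, the Sobolev embedding step, and the use of \eqref{sobiden} to sidestep the fractional Leibniz rule on $\mathbb H^3$ are all correct.
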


\begin{proof}[Proof of Lemma \ref{SmallDataScattering}] The first part is a direct consequence of Proposition \ref{stability}. Indeed, let $v=e^{it\Delta_\g}\phi$. Then clearly \eqref{ume} is satisfied while using Strichartz estimates,
\begin{equation*}
\Vert \nabla_\g v\vert v\vert^4\Vert_{L^2_tL^\frac{6}{5}_x(J\times\mathbb{H}^3)}\le \Vert v\Vert_{Z(J)}^4\Vert\nabla_\g e^{it\Delta_\g}\phi\Vert_{L^{10}_tL^\frac{30}{13}_x(J\times\mathbb{H}^3)}\lesssim M\delta^4,
\end{equation*}
thus we get \eqref{safetycheck}. Then we can apply Proposition \ref{stability} with $\rho=1$ to conclude. The second claim is classical and follows from a fixed point argument.

\end{proof}

\subsection{Description of an Euclidean nonlinear profile}

\begin{lemma}\label{GEForEP}

Let $(N_k,t_k,h_k)_k\in\mathcal{F}_e$ and $\phi\in\dot{H}^1(\mathbb{R}^3)$. Let $U_k$ be the solution of \eqref{eq1} such that $U_k(0)=\Pi_{t_k,h_k}(T_{N_k}\phi)$.

\medskip

\noindent (i) For $k$ large enough, $U_k\in C(\mathbb{R}:H^1)$ is globally defined, and
\begin{equation}\label{ControlOnZNormForEP}
\Vert U_k\Vert_{Z(\mathbb{R})}\le 2\tilde{C}(E^1_{\mathbb{R}^3}(\phi)).
\end{equation}

\noindent (ii) There exists an Euclidean solution $u\in C(\mathbb{R}:\dot{H}^1(\mathbb{R}^3))$ of
\begin{equation}\label{EEq}
\left(i\partial_t+\Delta\right)u=u\vert u\vert^4
\end{equation}
with scattering data $\phi^{\pm\infty}$ defined as in \eqref{EScat} such that the following holds, up to a subsequence:
for any $\varepsilon>0$, there exists $T(\phi,\varepsilon)$ such that for all $T\ge T(\phi,\varepsilon)$ there exists $R(\phi,\varepsilon,T)$ such that for all $R\ge R(\phi,\varepsilon,T)$, there holds that
\begin{equation}\label{ProxyEuclHyp}
\Vert U_k-\tilde{u}_k\Vert_{S^1(\vert t-t_k\vert\le TN_k^{-2})}\le\varepsilon,
\end{equation}
for $k$ large enough, where
\begin{equation*}
(\pi_{h_k^{-1}}\tilde{u}_k)(t,x)=N_k^{1/2}\eta(N_k\Psi_{I}^{-1}(x)/R)u(N_k\Psi_{I}^{-1}(x),N_k^2(t-t_k)).
\end{equation*}
In addition, up to a subsequence,
\begin{equation}\label{SmallnessOutsideInteractionRegion}
\Vert U_k\Vert_{L^{10}_tH^{1,\frac{30}{13}}_x\cap L^\frac{10}{3}_tH^{1,\frac{10}{3}}_x(\mathbb{H}^3\times\{N_k^2\vert t-t_k\vert\ge T\})}\le\varepsilon
\end{equation}
and for any $\pm(t-t_k)\ge  TN_k^{-2}$,
\begin{equation}\label{ScatEuclSol}
\Vert \nabla_\g \left(U_k(t)-\Pi_{t_k-t,h_k}T_{N_k}\phi^{\pm\infty}\right)\Vert_{L^2}\le \varepsilon,
\end{equation}
for $k$ large enough (depending on $\phi,\varepsilon,T,R$).
\end{lemma}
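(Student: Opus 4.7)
The plan is to transfer, via Lemma \ref{step1}, the global Euclidean solution produced by Theorem \ref{MainThmEucl} and to use Euclidean scattering to decouple $U_k$ from its nonlinear dynamics outside a window of length $\sim N_k^{-2}$ around $t_k$. By the translation invariance of \eqref{eq1} under $u \mapsto \pi_{h_k} u(\cdot + t_k, \cdot)$, I may assume $t_k = 0$ and $h_k = I$ throughout. Let $u \in C(\R:\dot H^1(\R^3))$ be the global solution from Theorem \ref{MainThmEucl} with $u(0) = \phi$, with scattering states $\phi^{\pm\infty}$. Fix $\varepsilon > 0$. Using \eqref{clo4} and \eqref{EScat}, I choose $T = T(\phi,\varepsilon) \geq 1$ large enough so that $\|u(t) - e^{it\Delta}\phi^{\pm\infty}\|_{\dot H^1(\R^3)} \leq \varepsilon$ for $\pm t \geq T$ and so that the tails $\|\nabla u\|_{L^{10}_t L^{30/13}_x}$ and $\|\nabla e^{it\Delta}\phi^{\pm\infty}\|_{L^{10}_t L^{30/13}_x}$ on $\R^3 \times \{\pm t \geq T\}$ are each at most $\varepsilon$. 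Finally, I approximate $\phi$ by $\phi' \in H^5(\R^3)$ with $\|\phi - \phi'\|_{\dot H^1} \leq \varepsilon_1 \ll \varepsilon$, where $\varepsilon_1$ is small enough to trigger Lemma \ref{step1}(ii); let $v'$ denote the corresponding $H^5$ Euclidean solution of \eqref{clo3}.

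On the concentration window $|t| \leq 2TN_k^{-2}$, Lemma \ref{step1} with $T_0 = 2T$ and $\rho = 1$ yields, for all $k$ large, the existence of $U_k$ there as an $H^1(\H^3)$-valued solution with finite $S^1$ norm, together with the approximation
\[
\|U_k - V_{R,N_k}\|_{S^1(-2TN_k^{-2},\,2TN_k^{-2})} \lesssim_{E^1_{\R^3}(\phi)} \varepsilon_1
\]
once $R \geq R_0(T,\phi',\varepsilon_1)$. Standard Euclidean stability for \eqref{clo3}, applied to the pair $(v',u)$, allows me to replace $v'$ by $u$ in $V_{R,N_k}$ at the cost of an additional $O(\varepsilon_1)$ in $S^1$, producing \eqref{ProxyEuclHyp} with $\widetilde u_k$ as in the statement, provided $\varepsilon_1$ is taken small enough compared to $\varepsilon$.

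To propagate $U_k$ to $t > TN_k^{-2}$ (the case $t < -TN_k^{-2}$ being symmetric under $u \mapsto \overline{u}(-t,\cdot)$), I match at $t = TN_k^{-2}$. Composing three small errors — the bound above at time $TN_k^{-2}$; Euclidean scattering $\|v'(T) - e^{iT\Delta}\phi^{+\infty}\|_{\dot H^1} \leq 2\varepsilon$; and Lemma \ref{step1}(ii) applied with $\rho=0$ to identify $T_{N_k}(e^{iT\Delta}\phi^{+\infty})$ with $\Pi_{-TN_k^{-2},I}(T_{N_k}\phi^{+\infty})$ up to $O(\varepsilon_1)$ in $H^1(\H^3)$ — gives
\[
\|U_k(TN_k^{-2}) - \Pi_{-TN_k^{-2},I}(T_{N_k}\phi^{+\infty})\|_{H^1(\H^3)} \leq C\varepsilon.
\]
Corollary \ref{step2} applied to $\phi^{+\infty}$ on $(T,\infty)$ transfers the Euclidean smallness of $\nabla e^{it\Delta}\phi^{+\infty}$ into $\|\nabla_\g e^{it\Delta_\g}(T_{N_k}\phi^{+\infty})\|_{L^{10}_t L^{30/13}_x(\H^3 \times (TN_k^{-2},\infty))} \lesssim \varepsilon$, hence, by Sobolev embedding, also smallness of the $Z$-norm of the linear evolution on the same interval. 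Lemma \ref{SmallDataScattering} then extends $U_k$ to $(TN_k^{-2},\infty)$ with $\|U_k\|_{S^1(TN_k^{-2},\infty)} \leq C\varepsilon$ and $\|U_k(t) - \Pi_{-t,I}(T_{N_k}\phi^{+\infty})\|_{H^1(\H^3)} \leq C\varepsilon$ for every $t > TN_k^{-2}$, which is \eqref{SmallnessOutsideInteractionRegion}--\eqref{ScatEuclSol}. Assertion (i) then follows by summing the three regions: on the window, $\|V_{R,N_k}\|_{Z} = \|v'_R\|_{L^{10}(\R^3 \times (-T,T))} \leq \tilde C(E^1_{\R^3}(\phi)) + O(\varepsilon)$, while each outer interval contributes $O(\varepsilon)$, yielding \eqref{ControlOnZNormForEP} once $\varepsilon$ is small enough.

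The main obstacle is the bookkeeping of the three distinct approximation errors at the matching times $t = \pm TN_k^{-2}$ — the regularization $\phi \to \phi'$, the hyperbolic-versus-Euclidean geometric discrepancy from Lemma \ref{step1}, and the Euclidean scattering $u(\pm T) \to e^{\pm iT\Delta}\phi^{\pm\infty}$ — which must be orchestrated so that a single small constant $\varepsilon$ controls the composite error and so that the outer nonlinear extension via Lemma \ref{SmallDataScattering} is only invoked once the input at the matching time is demonstrably close to a pure linear evolution; in particular, the quantifier order $\varepsilon \to T \to R \to k$ prescribed by the lemma has to be respected at every stage.
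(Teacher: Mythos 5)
Your reduction ``By the translation invariance of \eqref{eq1} under $u\mapsto\pi_{h_k}u(\cdot+t_k,\cdot)$, I may assume $t_k=0$'' is where the argument breaks. That symmetry takes the solution $U_k$ with data $U_k(0)=\pi_{h_k}e^{-it_k\Delta_\g}(T_{N_k}\phi)$ to a solution $\widetilde{U}_k$ with data $\widetilde{U}_k(-t_k)=e^{-it_k\Delta_\g}(T_{N_k}\phi)$ prescribed at time $-t_k$, not at $0$. The parameter $t_k$ sits inside the \emph{linear} flow $e^{-it_k\Delta_\g}$ applied to the data, and the linear flow does not commute with the nonlinear evolution; so there is no time translation that turns $U_k(0)=\Pi_{t_k,h_k}(T_{N_k}\phi)$ into data of the form $T_{N_k}\phi$ at time $0$. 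Consequently, your choice $u(0)=\phi$ for the Euclidean limit profile is wrong in exactly the regime you omitted.

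The correct dichotomy, which the paper uses, is: by Lemma \ref{equiv}(i) one may pass to an equivalent frame with $t_k=0$ only when $N_k^2|t_k|$ stays bounded (and then the profile $\phi$ gets replaced by a modified $\phi'$); otherwise $N_k^2|t_k|\to\infty$ and the frame is genuinely not equivalent to one with $t_k=0$. In the latter case the Euclidean solution associated to the profile is the one satisfying $\|\nabla(u(t)-e^{it\Delta}\phi)\|_{L^2}\to0$ as $t\to\mp\infty$ (so $\phi^{\mp\infty}=\phi$, not $u(0)=\phi$). The paper then applies the already-established $t_k=0$ case to the shifted profile with data $\widetilde\phi=u(0)$, uses \eqref{ScatEuclSol} from that case together with $N_k^2t_k\to\pm\infty$ to show $\|V_k(\mp t_k)-\Pi_{\pm t_k,h_k}T_{N_k}\phi\|_{H^1}\to0$, and concludes via the stability Proposition \ref{stability}. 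Your proposal is essentially correct for the $t_k=0$ subcase (the Lemma \ref{step1}/Corollary \ref{step2}/Lemma \ref{SmallDataScattering} chain and the matching at $\pm TN_k^{-2}$ all track the paper), but without the $N_k^2|t_k|\to\infty$ branch the lemma is not proved.
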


\begin{proof}[Proof of Lemma \ref{GEForEP}] In view of Lemma \ref{equiv} (i), we may assume that either $t_k=0$ or that $\lim_{k\to\infty}N_k^2|t_k|=\infty$. We may also assume that $h_k=I$ for any $k$.

If $t_k=0$ for any $k$ then the lemma follows from Lemma \ref{step1} and Corollary \ref{step2}: we let $u$ be the nonlinear Euclidean solution of \eqref{EEq} with $u(0)=\phi$ and notice that for any $\delta>0$ there is $T(\phi,\delta)$ such that
\begin{equation*}
\|\nabla u\|_{L^{10/3}_{x,t}(\mathbb{R}^3\times\{|t|\geq T(\phi,\delta)\})}\leq\delta.
\end{equation*}
The bound \eqref{ProxyEuclHyp} follows for any fixed $T\geq T(\phi,\delta)$ from Lemma \ref{step1}. Assuming $\delta$ is sufficiently small and $T$ is sufficiently large (both depending on $\phi$ and $\varepsilon$), the bounds \eqref{SmallnessOutsideInteractionRegion} and \eqref{ScatEuclSol} then follow from Corollary \ref{step2} (which guarantees smallness of $\mathbf{1}_{\pm}(t)\cdot e^{it\Delta_\g}U_k(\pm N_k^{-2}T(\phi,\delta))$ in $L^{10/3}_tH^{1,10/3}_x(\mathbb{H}^3\times\mathbb{R})$) and Lemma \ref{SmallDataScattering}.

Otherwise, if $\lim_{k\to\infty}N_k^2|t_k|=\infty$, we may assume by symmetry that $N_k^2t_k\to+\infty$. Then we let $u$ be the solution of
\eqref{EEq} such that
\begin{equation*}
\Vert\nabla\left(u(t)-e^{it\Delta}\phi\right)\Vert_{L^2(\mathbb{R}^3)}\to0
\end{equation*}
as $t\to-\infty$ (thus $\phi^{-\infty}=\phi$).
We let $\tilde{\phi}=u(0)$ and  apply the conclusions of the lemma to the frame $(N_k,0,h_k)_k\in\mathcal{F}_e$ and $V_k(s)$, the solution of \eqref{eq1} with initial data $V_k(0)=\pi_{h_k}T_{N_k}\tilde{\phi}$. In particular, we see from the fact that $N_k^2t_k\to+\infty$ and \eqref{ScatEuclSol} that
\begin{equation*}
\Vert V_k(-t_k)-\Pi_{t_k,h_k}T_{N_k}\phi\Vert_{H^1(\mathbb{H}^3)}\to 0
\end{equation*}
as $k\to\infty$. Then, using Proposition \ref{stability}, we see that
\begin{equation*}
\Vert U_k-V_k(\cdot-t_k)\Vert_{S^1(\mathbb{R})}\to 0
\end{equation*}
as $k\to\infty$, and we can conclude by inspecting the behavior of $V_k$. This ends the proof.
\end{proof}

\subsection{Non-interaction of nonlinear profiles}

\begin{lemma}\label{Assump}
Let $\widetilde{\phi}_{\mathcal{O}_k}$ and $\widetilde{\psi}_{\mathcal{O}^\prime_k}$ be two profiles associated to orthogonal frames $\mathcal{O}$ and $\mathcal{O}^\prime$. Let $U_k$ and $U^\prime_k$ be the solutions of the nonlinear equation \eqref{eq1}
such that $U_k(0)=\widetilde{\phi}_{\mathcal{O}_k}$ and $U^\prime_k(0)=\widetilde{\psi}_{\mathcal{O}^\prime_k}$. Suppose also that $E^1(\widetilde{\phi}_{\mathcal{O}_k})<E_{max}-\eta$ (resp. $E^1(\widetilde{\psi}_{\mathcal{O}^\prime_k})<E_{max}-\eta$) if $\mathcal{O}\in\mathcal{F}_h$ (resp. $\mathcal{O}^\prime\in\mathcal{F}_h$). Then, up to a subsequence,

\noindent (i) for any $T\in\mathbb{R}$, there holds that
\begin{equation}\label{OrthoConservedLarget}
\langle\nabla_\g U_k(T),\nabla_\g U^\prime_k(T)\rangle_{L^2\times L^2(\mathbb{H}^3)}\to0
\end{equation}
as $k\to\infty$.

\noindent (ii) \begin{equation}\label{Lem5Claim1}
\Vert U_k\nabla_\g U^\prime_k\Vert_{L^{5}_tL^\frac{15}{8}_x(\mathbb{H}^3\times\mathbb{R})}+\Vert (\nabla_\g U_k)\nabla_\g U^\prime_k\Vert_{L^\frac{5}{3}_{t,x}(\mathbb{H}^3\times\mathbb{R})}\to 0
\end{equation}
as $k\to\infty$.
\end{lemma}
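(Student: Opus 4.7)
The plan is to reduce both parts of the lemma, via the nonlinear profile descriptions in Section \ref{proofnew} and Lemma \ref{GEForEP}, to orthogonality statements for approximating sequences built out of linear hyperbolic evolutions of $H^1$ data and rescaled Euclidean nonlinear solutions concentrated on balls of radius $\sim N_k^{-1}$ for a time window $\sim N_k^{-2}$. These will then be handled by combining Lemma \ref{equiv}(ii) with Strichartz estimates and H\"older's inequality. As a preliminary observation, both $U_k$ and $U'_k$ have globally bounded $S^1(\mathbb{R})$-norms: for Euclidean profiles by \eqref{ControlOnZNormForEP} and Proposition \ref{stability}, and for hyperbolic profiles by the hypothesis $E^1 < E_{max} - \eta$, the definition of $E_{max}$, and \eqref{SubLinear}. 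Given $\varepsilon>0$, using Lemma \ref{GEForEP} in the Euclidean case and \eqref{DescHypProfile1} together with Strichartz in the hyperbolic case, I would decompose
\begin{equation*}
U_k = V^-_k + W_k + V^+_k + e_k, \qquad \|e_k\|_{S^1(\mathbb{R})} \leq \varepsilon,
\end{equation*}
for $k$ large, where $V^\pm_k$ is a free hyperbolic evolution restricted to $\pm(t-t_k)\geq T_0\tau_k$ with $\tau_k = N_k^{-2}$ (Euclidean case) or $\tau_k = 1$ (hyperbolic case), and $W_k$ is supported in $|t-t_k|\leq T_0\tau_k$ and, in the Euclidean case, additionally spatially supported in $B(h_k\cdot\mathbf{0}, R_0 N_k^{-1})$. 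The parameter $T_0 = T_0(\varepsilon)$ is chosen so that the Strichartz norms of $V^\pm_k$ on their time supports are $\leq\varepsilon$ (this uses \eqref{SmallnessOutsideInteractionRegion}--\eqref{ScatEuclSol} for Euclidean profiles, and Lemma \ref{SmallDataScattering} for hyperbolic profiles). The same decomposition is applied to $U'_k$.

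For part (i), at a fixed $T\in\mathbb{R}$ and for $k$ large, exactly one of $V^-_k(T), W_k(T), V^+_k(T)$ is nonzero; in either subcase that object is, up to an $H^1$-error of $O(\varepsilon)$, a linear-profile-type function attached to a frame equivalent (via a bounded time shift by $T$) to the original frame of $U_k$. The same description holds for $U'_k(T)$, and the orthogonality of $\mathcal{O}$ and $\mathcal{O}'$ is inherited by the shifted frames. Lemma \ref{equiv}(ii) (first limit) then gives $\langle\nabla_\g U_k(T),\nabla_\g U'_k(T)\rangle_{L^2\times L^2} = O(\varepsilon)$, and letting $\varepsilon\to 0$ yields \eqref{OrthoConservedLarget}.

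For part (ii), I would partition $\mathbb{R}$ into the three time regions coming from the decomposition of $U_k$ and estimate each contribution to $U_k\nabla_\g U'_k$ and $(\nabla_\g U_k)(\nabla_\g U'_k)$ separately, via H\"older combined with the uniform Strichartz bounds on $U'_k$; the error term $e_k$ contributes at most $\lesssim\varepsilon$. On the scattering tails the contribution is $\lesssim\varepsilon$ by the smallness of the Strichartz norms of $V^\pm_k$. On the interaction core $W_k$ one exploits whichever form of orthogonality survives among the three in \eqref{EquivalentFrame}: scale separation (one Euclidean with $N_k\to\infty$ and one hyperbolic, or two Euclidean with $N_k/N'_k\to 0$, where the spatial support of $W_k$ shrinks in a way incompatible with $U'_k$); temporal separation ($N_k^2|t_k-t'_k|\to\infty$, in which case the time supports of $W_k$ and $W'_k$ are disjoint for $k$ large, so $W_k$ overlaps only with $V'^{\pm}_k$, already handled by the tail argument); or spatial separation. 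The main obstacle is the last case for two Euclidean profiles of comparable scale $N_k\sim N'_k$ with bounded time offset but $N_k\,d(h_k\cdot\mathbf{0}, h'_k\cdot\mathbf{0})\to\infty$; since $W_k$ is supported in $B(h_k\cdot\mathbf{0}, R_0 N_k^{-1})$ and similarly for $W'_k$, the two supports become disjoint in $\mathbb{H}^3$ once $R_0, R'_0$ are fixed and $k$ is large, so the product vanishes identically. The analogous hyperbolic sub-case, with bounded time offset but $d(h_k\cdot\mathbf{0}, h'_k\cdot\mathbf{0})\to\infty$, is handled by first approximating each hyperbolic nonlinear profile on $[-T_0, T_0]$ in $S^1$ by a profile associated to a spatially compactly supported datum (using compactness of the $H^1$-image of a fixed compact time interval under the nonlinear flow), then translating by $h_k$ to separate supports. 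Collecting these estimates and sending $\varepsilon\to 0$ gives \eqref{Lem5Claim1}.
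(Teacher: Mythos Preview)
Your strategy matches the paper's: reduce $U_k(T)$ and $U'_k(T)$ to linear profiles in shifted (still orthogonal) frames and invoke Lemma~\ref{equiv}(ii) for part~(i); for part~(ii), localize each nonlinear profile to an ``interaction core'' plus small scattering tails and run a case analysis on which orthogonality parameter in~\eqref{EquivalentFrame} diverges. The paper packages this more compactly by introducing the space--time cylinders $\mathcal{S}^a_{N,T,h}$ and recording the two--sided smallness~\eqref{alex9} (small Strichartz norm outside $\mathcal{S}^R$ \emph{and} small Strichartz norm on any $\mathcal{S}^\delta$ centred anywhere), which handles all sub-cases uniformly with a single H\"older split.

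Two points in your write-up need tightening. First, in the scale-separation case you say ``the spatial support of $W_k$ shrinks in a way incompatible with $U'_k$''. This is only half the argument: if $N_k/N'_k\to 0$ then it is the support of $W'_k$ that shrinks, not $W_k$, and since the product $U_k\nabla_\g U'_k$ is asymmetric you must treat the two directions separately. What actually makes the argument go through is the analogue of the \emph{second} line of~\eqref{alex9}: the Strichartz norm of either profile on a space--time cylinder of its own rescaled size $\delta$ is small uniformly in the centre, which follows from the description in Lemma~\ref{GEForEP} (Euclidean case) and absolute continuity of the Strichartz integral of a fixed $W^\nu$ (hyperbolic case). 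You should state and use this explicitly.

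Second, in the hyperbolic--hyperbolic spatial separation case you propose to approximate the nonlinear profile in $S^1$ by ``a profile associated to a spatially compactly supported datum'' and then translate to separate supports. This does not work as written: the nonlinear (or even linear) evolution of a compactly supported datum is not compactly supported, so translating by $h_k$ does not produce disjoint supports. The correct fix, which the paper uses, is to approximate the space--time function $W^\nu$ itself in $L^{10}_{t,x}$ (and $\nabla_\g W^\nu$ in $L^{10}_tL^{30/13}_x$) by a function in $C^\infty_c(\mathbb{H}^3\times\mathbb{R})$, using density; then the translates $\pi_{h_k}W^{\nu,\prime}(\cdot-t_k)$ and $\pi_{h'_k}W^{\nu',\prime}(\cdot-t'_k)$ genuinely have disjoint space--time supports for $k$ large. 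Your compactness-of-the-$H^1$-image remark can be salvaged into a uniform spatial tightness statement, but the direct space--time density argument is cleaner and is what the paper does.
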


\begin{proof}[Proof of Lemma \ref{Assump}] (i) We fix $\varepsilon_0>0$ sufficiently small. We first consider the case of a hyperbolic frame $\mathcal{O}$. If $\mathcal{O}$ is equivalent to $(1,0,h_k)_k$, we may use Lemma \ref{equiv} to set $t_k=0$ for all $k$. In this case, letting $W$ be the solution of \eqref{eq1} with initial data $W(0)=\phi$, we get by invariance of \eqref{eq1} under the action of $\pi$ that $U_k(T)=\Pi_{0,h_k}(W(T))$.

If $\vert t_k\vert\to\infty$, we may assume that $t_k\to+\infty$ and then, we see from Strichartz estimates that for $k$ sufficiently large
\begin{equation*}
\Vert e^{i(t-t_k)\Delta_\g}\phi\Vert_{Z(-\infty,T+1)}\le\varepsilon_0
\end{equation*}
and applying Lemma \ref{SmallDataScattering}, we get that
\begin{equation*}
\Vert\nabla_\g\left( U_k(T)-\Pi_{t_k-T,h_k}\phi\right)\Vert_{L^2}\le \varepsilon_0^3.
\end{equation*}

\medskip

Now we assume that $\mathcal{O}=(N_k,t_k,h_k)_k$ with $N_k\to\infty$. In the case when $\mathcal{O}$ is equivalent to $(N_k,T,h_k)_k$, i.e. if $N_k^2\vert t_k-T\vert$ remains bounded, up to passing to a subsequence, we may assume that $N_k^2(T-t_k)\to T_0$. Then, applying Lemma \ref{GEForEP}, we see that there exists $R>0$ such that
\begin{equation*}
\Vert U_k(T)-\tilde{u}_k(T)\Vert_{H^1(\mathbb{H}^3)}\lesssim \varepsilon_0/2
\end{equation*}
for $k$ large enough.
In particular, for $k$ large enough,
\begin{equation*}
\Vert U_k(T)-\Pi_{0,h_k}T_{N_k}\big(\eta(\frac{\cdot}{R})u(\cdot,T_0)\big)\Vert_{H^1(\mathbb{H}^3)}\le \varepsilon_0.
\end{equation*}

Finally, if $N_k\vert t-t_k\vert\to\infty$, passing to a subsequence, we may assume that $N_k^2(t-t_k)\to+\infty$. In this case again, we see that there exists $\tilde{\phi}$ such that, for $k$ large enough,
\begin{equation*}
\Vert U_k(T)-\Pi_{t_k-T,h_k}T_{N_k}\tilde{\phi}\Vert_{H^1(\mathbb{H}^3)}\le\varepsilon_0.
\end{equation*} 

Therefore, in all cases, we could (up to an error $\varepsilon_0$) replace $U_k(T)$ and $U^\prime_k(T)$ by a linear profile in new frames $\mathcal{O}_T$, $\mathcal{O}^\prime_T$ with the property that $\mathcal{O}_T$ and $\mathcal{O}^\prime_T$ are orthogonal if and only if $\mathcal{O}$ and $\mathcal{O}^\prime$ are orthogonal. Thus \eqref{OrthoConservedLarget} follows from Lemma \ref{equiv} (ii).

\medskip

(ii) We give a proof that the first norm in \eqref{Lem5Claim1} decays, the claim for the second norm is similar. We fix $\varepsilon>0$. Then, applying Lemma \ref{GEForEP} if $U_k$ is a profile associated to a Euclidean frame (respectively \eqref{GDForHypP} if $U_k$ is a profile associated to a Euclidean frame), we see that
\begin{equation*}
\Vert U_k\Vert_{S^1}+\Vert U^\prime_k\Vert_{S^1}\le M<+\infty
\end{equation*}
and that 
there exist $R$ and $\delta$ such that
\begin{equation}\label{alex9}
\begin{split}
&\|\nabla_\g U_k\|_{L^{10}_tL^{30/13}_x\cap L^{10/3}_{x,t}((\mathbb{H}^3\times\mathbb{R})\setminus \mathcal{S}^R_{N_k,t_k,h_k})}+\|U_k\|_{L^{10}_{x,t}((\mathbb{H}^3\times\mathbb{R})\setminus \mathcal{S}^R_{N_k,t_k,h_k})}\le\varepsilon,\\
&\sup_{S,h}\big[\Vert\nabla_\g U_k\Vert_{L^{10}_tL^{30/13}_x\cap L^{10/3}_{x,t}(S^\delta_{N_k,S,h})}+\Vert U_k\Vert_{L^{10}_{x,t}(S^\delta_{N_k,S,h})}\big]\le\varepsilon,
\end{split}
\end{equation}
where
\begin{equation}
\label{DefOfSetS}
\mathcal{S}^a_{N,T,h}:=\{(x,t)\in\mathbb{H}^3\times\mathbb{R}:d_\g(h^{-1}\cdot x,{\bf{0}})\le aN^{-1}\text{ and }\vert t-T\vert\le a^2N^{-2}\}.
\end{equation}
A similar claim holds for $U^\prime_k$ with the same values of $R$, $\delta$.

If $N_k/N^\prime_k\to\infty$, then for $k$ large enough we estimate
\begin{equation*}
\begin{split}
\Vert U_k\nabla_\g U^\prime_k&\Vert_{L^{5}_tL^\frac{30}{16}_x}
\le \Vert U_k\nabla_\g U^\prime_k\Vert_{L^5_tL^\frac{30}{16}_x(\mathcal{S}_{N_k,t_k,h_k}^R)}+
\Vert U_k\nabla_\g U^\prime_k\Vert_{L^5_tL^\frac{30}{16}_x((\mathbb{H}^3\times\mathbb{R})\setminus \mathcal{S}_{N_k,t_k,h_k}^R)}\\
&\le \Vert U_k\Vert_{L^{10}_{t,x}}\Vert\nabla_\g U^\prime_k\Vert_{L^{10}_tL^\frac{30}{13}(\mathcal{S}^\delta_{N'_k,t_k,h_k})}+
\Vert U_k\Vert_{L^{10}_{t,x}((\mathbb{H}^3\times\mathbb{R})\setminus \mathcal{S}_{N_k,t_k,h_k}^R)}\Vert\nabla_\g U'_k\Vert_{L^{10}_tL^\frac{30}{13}_x}\\
&\lesssim_{M}\varepsilon.
\end{split}
\end{equation*}
The case when $N^\prime/N_k\to\infty$ is similar.

Otherwise, we can assume that $C^{-1}\le N_k/N^\prime_k\le C$ for all $k$, and then find $k$ sufficiently large so that $\mathcal{S}^R_{N_k,t_k,h_k}\cap \mathcal{S}^R_{N'_k,t'_k,h'_k}=\emptyset$. Using \eqref{alex9} it follows as before that
\begin{equation*}
\Vert U_k\nabla_\g U^\prime_k\Vert_{L^{5}_tL^\frac{30}{16}_x}\lesssim_M \varepsilon.
\end{equation*}

Hence, in all cases,
\begin{equation*}
\limsup_{k\to\infty}\Vert U_k\nabla_\g U^\prime_k\Vert_{L^5_tL^\frac{15}{8}_x}\lesssim_{M}\varepsilon.
\end{equation*}
The convergence to $0$ of the first term in \eqref{Lem5Claim1} follows.
\end{proof}

\subsection{Control of the error term}

\begin{lemma}\label{ControlOfe}

With the notations in the proof of Proposition \ref{lem4}, there holds that

\begin{equation}\label{2Terms}
\begin{split}
\lim_{J\to\infty}\limsup_{k\to\infty}\Big\| \nabla_\g\big(F(U^{app}_k)-\sum_{1\le\alpha\le 2J}F(U^\alpha_k)\big)\Big\|_{L^2_tL^\frac{6}{5}_x}=0.
\end{split}
\end{equation}

\end{lemma}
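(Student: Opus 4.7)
I would write $U^{app}_k = v_k + w_k$ with $v_k := U^J_{prof,k} = \sum_{1\le\alpha\le 2J} U^\alpha_k$ and $w_k := e^{it\Delta_\g}r_k^J$, and decompose
\begin{equation*}
F(U^{app}_k) - \sum_\alpha F(U^\alpha_k) = \underbrace{\bigl[F(v_k + w_k) - F(v_k)\bigr]}_{=:A_k} + \underbrace{\bigl[F(v_k) - \sum_\alpha F(U^\alpha_k)\bigr]}_{=:B_k}.
\end{equation*}
The plan is to show both $\|\nabla_\g A_k\|_{L^2_t L^{6/5}_x}$ and $\|\nabla_\g B_k\|_{L^2_t L^{6/5}_x}$ vanish in the iterated limit $\lim_{J\to\infty}\limsup_{k\to\infty}$.

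The cross-term piece $B_k=F(\sum U^\alpha_k)-\sum F(U^\alpha_k)$ is, for each fixed $J$, a finite multinomial sum of $5$-fold products of profiles in which at least two distinct profiles $U^\alpha_k$, $U^\beta_k$ appear; the product rule preserves this feature after applying $\nabla_\g$. I would distribute the $L^2_tL^{6/5}_x$ norm via H\"older (with $L^{10}_{x,t}$ on four undifferentiated factors and $L^{10}_tL^{30/13}_x$ on the differentiated factor), use \eqref{SubLinear} to bound each factor uniformly, and pair the two distinct-profile factors into one of the bilinear quantities in Lemma \ref{Assump}(ii), which vanishes as $k\to\infty$. Summing the $O_J(1)$ terms yields $\lim_{k\to\infty}\|\nabla_\g B_k\|_{L^2_tL^{6/5}_x}=0$ for each fixed $J$.

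For $A_k$, expanding $(v_k+w_k)^3(\bar v_k+\bar w_k)^2 - v_k^3\bar v_k^2$ and applying $\nabla_\g$ produces two classes of terms. \emph{Type (i):} products containing a non-differentiated factor of $w_k$ or $\bar w_k$. \emph{Type (ii):} products with $\nabla_\g w_k$ (or $\nabla_\g\bar w_k$) but no other $w$-factor, pointwise $\lesssim |v_k|^4|\nabla_\g w_k|$; these arise only from applying $\nabla_\g$ to the two single-$w$ terms $3v_k^2\bar v_k^2 w_k$ and $2v_k^3\bar v_k\bar w_k$. For Type (i), Lemma \ref{lem12}(ii), \eqref{rkJSmall}, and Strichartz interpolation give
\begin{equation*}
\|w_k\|_Z \lesssim \|w_k\|_{L^6_tL^{18}_x}^{3/5}\|w_k\|_{L^\infty_tL^6_x}^{2/5} \longrightarrow 0,
\end{equation*}
so H\"older makes each Type (i) contribution $o(1)$.

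The Type (ii) term is the main obstacle, since neither $\|v_k\|_Z$ nor any Strichartz norm of $\nabla_\g w_k$ is small. I would expand $|v_k|^4\lesssim\sum_{\vec\alpha}|U^{\alpha_1}_k|\cdots|U^{\alpha_4}_k|$; off-diagonal quadruples are absorbed by Lemma \ref{Assump}(ii) as for $B_k$, leaving the diagonal contributions $|U^\alpha_k|^4|\nabla_\g w_k|$. For each single profile with frame $(N_k^\alpha,t_k^\alpha,h_k^\alpha)$, I split space-time into the concentration region $\mathcal{S}^R_{N_k^\alpha,t_k^\alpha,h_k^\alpha}$ of \eqref{DefOfSetS} and its complement. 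On the complement, \eqref{SmallnessOutsideInteractionRegion} (Euclidean profiles) or the approximation \eqref{GDForHypP} together with scattering (hyperbolic profiles) force $\|U^\alpha_k\|_{L^{10}_{x,t}((\mathcal{S}^R)^c)}\to 0$ as $R\to\infty$, so H\"older against $\|\nabla_\g w_k\|_{L^{10}_tL^{30/13}_x}\lesssim\|r_k^J\|_{H^1}$ handles this piece. On $\mathcal{S}^R$ itself, Lemma \ref{locsmo} applied to $r_k^J/\|r_k^J\|_{H^1}$ at scale $N_k^\alpha$ yields
\begin{equation*}
\|\nabla_\g w_k\|_{L^5_tL^{15/8}_x(\mathcal{S}^R_{N_k^\alpha,t_k^\alpha,h_k^\alpha})} \lesssim (N_k^\alpha)^{-1/2}C(R)(\delta_k^J)^{1/20},
\end{equation*}
with $\delta_k^J:=\sup_{N,t,x}N^{-1/2}|P_Nw_k|(x)\to 0$ by \eqref{rkJSmall}. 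Combined with the scaling bound $\|U^\alpha_k\|_{L^{40/3}_{x,t}(\mathcal{S}^R)}^4\lesssim (N_k^\alpha)^{1/2}C(R)$ (since $|U^\alpha_k|\approx(N_k^\alpha)^{1/2}$ on a set of measure $\sim R^5(N_k^\alpha)^{-5}$; for hyperbolic profiles use smoothness of the limiting profile), a H\"older pairing at $\tfrac{3}{10}+\tfrac{1}{5}=\tfrac{1}{2}$ in time and $\tfrac{3}{10}+\tfrac{8}{15}=\tfrac{5}{6}$ in space gives
\begin{equation*}
\||U^\alpha_k|^4\nabla_\g w_k\|_{L^2_tL^{6/5}_x(\mathcal{S}^R_{N_k^\alpha,t_k^\alpha,h_k^\alpha})} \lesssim C(R)(\delta_k^J)^{1/20} \longrightarrow 0
\end{equation*}
as $k\to\infty$, for each fixed $R$. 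Taking $k\to\infty$, then $R\to\infty$, then $J\to\infty$ completes the proof. The hardest step is precisely this Type (ii) bound: the local smoothing estimate of Lemma \ref{locsmo} is what compensates for the absence of any Strichartz-type smallness of $\nabla_\g w_k$, and the scale cancellation $(N_k^\alpha)^{1/2}\cdot (N_k^\alpha)^{-1/2}$ between the profile concentration and the local-smoothing factor is the key structural input.
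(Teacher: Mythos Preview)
Your decomposition into $A_k$ and $B_k$ and the treatment of $B_k$ and of the Type~(i) terms in $A_k$ match the paper's proof. The genuine gap is in the Type~(ii) diagonal step. You establish, for each \emph{fixed} profile index $\alpha$, a bound on $\||U^\alpha_k|^4\nabla_\g w_k\|_{L^2_tL^{6/5}_x}$ by splitting into $\mathcal{S}^R$ and its complement. But you then have to \emph{sum} over $\alpha=1,\ldots,2J$ with $J\to\infty$, and the constants in both pieces depend on the profile: the $R$ needed to make the exterior contribution small depends on the concentration profile of $U^\alpha_k$, and your interior constant $C(R)$ (through $\|U^\alpha_k\|_{L^{40/3}(\mathcal{S}^R)}^4$) depends on $\alpha$ via the $L^\infty$ size of the underlying Euclidean or hyperbolic solution. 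Your order of limits ``$k\to\infty$, then $R\to\infty$, then $J\to\infty$'' therefore does not close: after $\limsup_k$ you are left with $\sum_{\alpha=1}^{2J} C(\alpha,R)\,(\limsup_k\delta_k^J)^{1/20}$ plus exterior terms, and the prefactor grows uncontrollably with $J$. (Note also that $\delta_k^J\to 0$ only in the iterated limit $\lim_J\limsup_k$ by \eqref{rkJSmall}, not as $k\to\infty$ for fixed $J$, so your phrase ``$\to 0$ as $k\to\infty$, for each fixed $R$'' is already inaccurate.)

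The paper closes this gap with an index cutoff $j_0=j_0(\varepsilon_0)$ \emph{independent of $J$}: by \eqref{AlmostOrtho} and \eqref{SubLinear} one can choose $j_0$ so that $\sup_{\alpha\ge j_0}\limsup_k\|U^\alpha_k\|_{L^{10}_{x,t}}\le\varepsilon_0$. For the tail $\alpha\ge j_0$ one simply bounds $\|(U^\alpha_k)^4\nabla_\g w_k\|_{L^2L^{6/5}}\lesssim \|U^\alpha_k\|_{L^{10}}^4\|\nabla_\g w_k\|_{L^{10}_tL^{30/13}_x}\lesssim_{E_{max},\eta} E^1(U^\alpha_k)\,\varepsilon_0$, and the energy summation $\sum_\alpha E^1(U^\alpha_k)\lesssim E_{max}$ from \eqref{AlmostOrtho} gives a contribution $\lesssim_{E_{max},\eta}\varepsilon_0$ uniformly in $J$. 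Only the \emph{finitely many} profiles $\alpha<j_0$ are treated by your local-smoothing argument (this is the paper's claim \eqref{CsqOfLocSmo}); since $j_0$ does not depend on $J$, summing those is harmless. Without this splitting the $J$-summation in your Type~(ii) bound is not under control.
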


\begin{proof}

Fix $\varepsilon_0>0$.
For fixed $J$, we let
\begin{equation*}
U^J_{prof,k}=\sum_{1\le\mu\le J}U^\mu_{e,k}+\sum_{1\le\nu\le J}U^\nu_{h,k}
=\sum_{1\le\gamma\le 2J}U^\gamma_k
\end{equation*}
be the sum of the profiles. Then we separate
\begin{equation*}
\begin{split}
&\Big\| \nabla_\g\big(F(U^{app}_k)-\sum_{1\le\alpha\le 2J}F(U^\alpha_k)\big)\Big\|_{L^2_tL^\frac{6}{5}_x}\\
&\leq\Big\|\nabla_\g \big(F(U^{app}_k)-F(U^J_{prof,k})\big)\Big\|_{L^2_tL^\frac{6}{5}_x}+
\Big\|\nabla_\g\big( F(U^J_{prof,k})-\sum_{1\le\alpha\le 2J}F(U^\alpha_k)\big)\Big\|_{L^2_tL^\frac{6}{5}_x}.
\end{split}
\end{equation*}

We first claim that, for fixed $J$,
\begin{equation}\label{2Terms1}
\limsup_{k\to\infty}
\Big\|\nabla_\g( F(U^J_{prof,k})-\sum_{1\le\alpha\le 2J}F(U^\alpha_k))\Big\|_{L^2_tL^\frac{6}{5}_x}=0.
\end{equation}
Indeed, using that
\begin{equation*}
\vert\nabla_\g(F(\sum_{1\le\alpha\le 2J}U^\alpha_k)-\sum_{1\le\alpha\le 2J}F(U^\alpha_k))\vert\lesssim\sum_{\alpha\ne\beta,\gamma}\vert U^\gamma_k\vert^3\vert U^\alpha_k\nabla_\g U^\beta_k\vert,
\end{equation*}
we see that
\begin{equation*}\label{InterClaimPP2}
\Big\|\nabla_\g( F(U^J_{prof,k})-\sum_{1\le\alpha\le 2J}F(U^\alpha_k))\Big\|_{L^2_tL^\frac{6}{5}_x}
\lesssim\sum_{\alpha\ne\beta,\gamma}\Vert U^\gamma_k\Vert_{L^{10}_{t,x}}^3\Vert U^\alpha_k\nabla_\g U^\beta_k\Vert_{L^5_tL^\frac{15}{8}_x}.
\end{equation*}
Therefore \eqref{2Terms1} follows from \eqref{Lem5Claim1} since the sum is over a finite set and each profile is bounded in $L^{10}_{t,x}$ by \eqref{SubLinear}.

Now we prove that, for any given $\varepsilon_0>0$,
\begin{equation}\label{2Terms2}
\limsup_{J\to\infty}\limsup_{k\to\infty}\Big\|\nabla_\g \big(F(U^{app}_k)-F(U^J_{prof,k})\big)\Big\|_{L^2_tL^\frac{6}{5}_x}\lesssim\varepsilon_0.
\end{equation}
This would complete the proof of \eqref{2Terms}. We first remark that, from \eqref{UHasUniformlyBoundedSNorm}, $U^J_{prof,k}$ has bounded $L^{10}_tH^{1,\frac{30}{13}}_x$-norm, uniformly in $J$ for $k$ sufficiently large. We also let $j_0=j_0(\varepsilon_0)$ independent of $J$ be such that\footnote{The fact that $j_0$ exists follows from \eqref{AlmostOrtho} and \eqref{SubLinear}.}
\begin{equation}\label{ChoiceOfj0}
\sup_{\alpha\ge j_0}\limsup_{k\to\infty}\Vert U^\alpha_k\Vert_{L^{10}_{t,x}}\lesssim\varepsilon_0.
\end{equation}
\medskip

Now we compute
\begin{equation*}
\begin{split}
\Vert\nabla_\g&\left( F(U^J_{prof,k}+e^{it\Delta_\g}r^J_k)-F(U^J_{prof,k})\right)\Vert_{L^2_tL^\frac{6}{5}_x}\\
&\lesssim \sum_{j=1}^5\sum_{p=0}^1\Vert \nabla_\g^p (e^{it\Delta_\g}r^J_k)^j\nabla_\g^{1-p}(U^J_{prof,k})^{5-j}\Vert_{L^2_tL^\frac{6}{5}_x}.
\end{split}
\end{equation*}
Since both $U^J_{prof,k}$ and $e^{it\Delta_\g}r^J_k$ are bounded in $L^{10}_tH^{1,\frac{30}{13}}_x$ uniformly in $J$, if there is at least one term $e^{it\Delta_\g}r^J_k$ with no derivative, we can bound the norm in the expression above by
\begin{equation*}
\Vert \nabla_\g^p (e^{it\Delta_\g}r^J_k)^j\nabla_\g^{1-p}(U^J_{prof,k})^{5-j}
\Vert_{L^2_tL^\frac{6}{5}_x}\lesssim_{E_{max},\eta} \Vert e^{it\Delta_\g}r^J_k\Vert_{L^{10}_{t,x}}
\end{equation*}
uniformly in $J$, so that taking the limit $k\to\infty$ and then $J\to\infty$, we get $0$.
Hence we need only consider the term
\begin{equation*}
\Vert (U^J_{prof,k})^4\nabla_\g (e^{it\Delta_\g}r^J_k)\Vert_{L^{2}_tL^\frac{6}{5}_x}.
\end{equation*}

Expanding further $(U^J_{prof,k})^4$, and using Lemma \ref{Assump} (ii) and \eqref{SubLinear}, we see that
\begin{equation*}
\begin{split}
\limsup_{k\to\infty}\Vert (U^J_{prof,k})^4\nabla_\g (e^{it\Delta_\g}r^J_k)\Vert_{L^{2}_tL^\frac{6}{5}_x}
&=\limsup_{k\to\infty}\sum_{1\le\alpha\le J}\Vert (U^\alpha_k)^4\nabla_\g (e^{it\Delta_\g}r^J_k)\Vert_{L^{2}_tL^\frac{6}{5}_x}\\
&\lesssim \limsup_{k\to\infty}\sum_{1\le\alpha\le J}\Vert U^\alpha_k\Vert_{L^{10}_{t,x}}^3\Vert U^\alpha_k\nabla_\g (e^{it\Delta_\g}r^J_k)\Vert_{L^5_tL^\frac{15}{8}_x}\\
&\lesssim_{E_{max},\eta} \limsup_{k\to\infty}\sum_{1\le\alpha\le j_0}E^1( U^\alpha_k)\Vert U^\alpha_k\nabla_\g (e^{it\Delta_\g}r^J_k)\Vert_{L^5_tL^\frac{15}{8}_x}\\
&+\limsup_{k\to\infty}\sum_{j_0\le\alpha\le J}E^1( U^\alpha_k)\Vert U^\alpha_k\Vert_{L^{10}_{t,x}}\Vert\nabla_\g (e^{it\Delta_\g}r^J_k)\Vert_{L^{10}_tL^\frac{30}{13}_x}
\\
\end{split}
\end{equation*}
where $j_0$ is chosen in \eqref{ChoiceOfj0}. Consequently, using the summation formula for the energies \eqref{AlmostOrtho}, we get
\begin{equation*}
\limsup_{k\to\infty}\Vert (U^J_{prof,k})^4\nabla_\g (e^{it\Delta_\g}r^J_k)\Vert_{L^{2}_tL^\frac{6}{5}_t}\lesssim_{E_{max},\eta}\varepsilon_0+\sup_{1\le\alpha\le j_0}\limsup_{k\to\infty}\Vert U^\alpha_k\nabla_\g (e^{it\Delta_\g}r^J_k)\Vert_{L^5_tL^\frac{15}{8}_x}.
\end{equation*}

Finally, we obtain from Lemma \ref{locsmo} that for any profile $U^\alpha_k$,
\begin{equation}\label{CsqOfLocSmo}
\lim_{J\to\infty}\limsup_{k\to\infty}\Vert U^\alpha_k\nabla_\g (e^{it\Delta_\g}r^J_k)\Vert_{L^5_tL^\frac{15}{8}_x(\mathbb{H}^3\times\mathbb{R})}=0.
\end{equation}
This would imply \eqref{2Terms2} and hence the proof of Lemma \ref{ControlOfe}. To prove \eqref{CsqOfLocSmo}, fix $\varepsilon>0$. For $U^\alpha_k$ given, we consider the sets $\mathcal{S}_{N,T,h}^a$ as defined in \eqref{DefOfSetS}. For $R$ large enough we have, using \eqref{alex9},
\begin{equation*}
\begin{split}
&\Vert U^\alpha_k\nabla_\g (e^{it\Delta_\g}r^J_k)\Vert_{L^5_tL^\frac{15}{8}_x((\mathbb{H}^3\times\mathbb{R})\setminus\mathcal{S}^R_{N_k,t_k,h_k})}\\
&\le \Vert U^\alpha_k\Vert_{L^{10}_{x,t}((\mathbb{H}^3\times\mathbb{R})\setminus\mathcal{S}^R_{N_k,t_k,h_k})}\Vert \nabla_\g (e^{it\Delta_\g}r^J_k)\Vert_{L^{10}_tL^\frac{30}{13}_x}\lesssim_{E_{max},\eta}\varepsilon.
\end{split}
\end{equation*}

Now in the case of a hyperbolic profile $U^\nu_{h,k}$, we know that $W^\nu$ as in \eqref{DescHypProfile1} satisfies $W^\nu\in L^{10}_{x,t}(\mathbb{H}^3\times\mathbb{R})$. We choose $W^{\nu,\prime}\in C^\infty_c(\mathbb{H}^3\times\mathbb{R})$ such that
\begin{equation*}
\Vert W^\nu-W^{\nu,\prime}\Vert_{L^{10}_{x,t}(\mathbb{H}^3\times\mathbb{R})}\le\varepsilon.
\end{equation*}
Using \eqref{GDForHypP} we see that there exists a constant $C_{\nu,\varepsilon}$ such that
\begin{equation*}
\begin{split}
\Vert U^\nu_{h,k}\nabla_\g (e^{it\Delta_\g}r^J_k)\Vert_{L^5_tL^\frac{15}{8}_x(\mathcal{S}^R_{N_k,t_k,h_k})}
&\le \Vert (U^\nu_{h,k}-\pi_{h_k^\nu}W^{\nu,\prime}(.-t^\nu_k))\nabla_\g (e^{it\Delta_\g}r^J_k)\Vert_{L^5_tL^\frac{15}{8}_x(\mathcal{S}^R_{N_k,t_k,h_k})}\\
&+\Vert W^{\nu,\prime}\Vert_{L^\infty_{t,x}}\Vert \nabla_\g (e^{it\Delta_\g}r^J_k)\Vert_{L^5_tL^\frac{15}{8}_x(\mathcal{S}^R_{N_k,t_k,h_k})}\\
&\lesssim_{E_{max},\eta} \varepsilon+C_{\nu,\varepsilon}\Vert \nabla_\g (e^{it\Delta_\g}r^J_k)\Vert_{L^5_tL^\frac{15}{8}_x(\mathcal{S}^R_{N_k,t_k,h_k})}.
\end{split}
\end{equation*}

In the case of a Euclidean profile, we choose $v\in C^\infty_c(\mathbb{R}^3\times\mathbb{R})$ such that
\begin{equation*}
\Vert u-v\Vert_{L^{10}_{t,x}(\mathbb{R}^3\times\mathbb{R})}\le\varepsilon,
\end{equation*}
for $u$ given in Lemma \ref{GEForEP}. Then, using \eqref{ProxyEuclHyp}, we estimate as before
\begin{equation*}
\Vert U^\mu_{e,k}\nabla_\g (e^{it\Delta_\g}r^J_k)\Vert_{L^5_tL^\frac{15}{8}_x(\mathcal{S}^R_{N_k,t_k,h_k})}\lesssim_{E_{max},\eta}\varepsilon+C_{\mu,\varepsilon}(N_k^\mu)^\frac{1}{2}\Vert\nabla_\g (e^{it\Delta_\g}r^J_k)\Vert_{L^5_tL^\frac{15}{8}_x(\mathcal{S}^R_{N_k,t_k,h_k})}.
\end{equation*}

Therefore, we conclude that in all cases,
\begin{equation*}
\Vert U^\alpha_k\nabla_\g (e^{it\Delta_\g}r^J_k)\Vert_{L^5_tL^\frac{15}{8}_x(\mathcal{S}^R_{N_k,t_k,h_k})}\lesssim_{E_{max},\eta}\varepsilon+C_{\alpha,\varepsilon}
(N_k^\alpha)^\frac{1}{2}\Vert\nabla_\g (e^{it\Delta_\g}r^J_k)\Vert_{L^5_tL^\frac{15}{8}_x(\mathcal{S}^R_{N_k,t_k,h_k})}.
\end{equation*}
Finally we use Lemma \ref{locsmo} and \eqref{rkJSmall} to conclude that
\begin{equation*}
\lim_{J\to\infty}\limsup_{k\to\infty}\Vert U^\alpha_k\nabla_\g (e^{it\Delta_\g}r^J_k)\Vert_{L^5_tL^\frac{15}{8}_x(\mathcal{S}^R_{N_k,t_k,h_k})}\lesssim_{E_{max},\eta}\varepsilon.
\end{equation*}
Since $\varepsilon$ was arbitrary, we obtain \eqref{CsqOfLocSmo} and hence finish the proof.
\end{proof}

\end{document}